\documentclass[a4paper]{article}
\usepackage{amssymb}
\usepackage{amsmath}
\usepackage{amsthm}
\usepackage{color}
\usepackage{geometry}
\usepackage{epsf}
\usepackage{subfigure}
\usepackage{multirow}
\usepackage{mathrsfs}
\geometry{a4paper,scale=0.8}
\newtheorem{theorem}{Theorem}[section]
\newtheorem{lemma}[theorem]{Lemma}
\newtheorem{definition}[theorem]{Definition}
\newtheorem{remark}[theorem]{Remark}
 \textheight22cm\textwidth15cm
 
\makeatletter

\makeatother

\title{Existence results for some nonlinear elliptic systems on graphs}

\begin{document}

\date{2023.8.17}

\author{Shoudong Man\footnote{manshoudong@163.com}\\
\it College of science and technology, Tianjin University of Finance and Economics,\\ Zhujiang Rd, Tianjin 300222,  China}

\maketitle

\begin{abstract}
In this paper, several nonlinear elliptic systems are investigated on graphs.
One type of the sobolev embedding theorem and a new version of the strong maximum principle are established.
Then, by using the variational method, the existence of different types of solutions to some elliptic systems is confirmed.
Such problems extend the existence results on closed Riemann surface to graphs
and extend the existence results for one single equation on graphs [A. Grigor'yan, Y. Lin, Y. Yang, J. Differential Equations, 2016]
to nonlinear elliptic systems on graphs.
Such problems can also be viewed as one type of discrete version of the elliptic systems on Euclidean space and Riemannian manifold.

\textbf{Keywords:} The Sobolev embedding theorem on graphs, nonlinear elliptic systems on graphs, global existence result, local existence result

\textbf{MSC(2020):} 35A15, 35J50, 35R02

\end{abstract}

\section{Introduction}

 Recently, the study of discrete Laplacians and various equations on graphs have attracted much attention.
 About six years ago, A. Grigor'yan, Y. Lin and Y. Yang systematically raised and studied Yamabe equations, Kazdan-Warner equations and Schr\"odinger equations on graphs \cite{GLA,AGY,GAL}. They first
established the Sobolev spaces and the functional framework on graphs, and as a consequence variational methods are applied to solve partial differential equations on graphs. In particular, on a locally finite graph $G = (V, E)$, they proved that there exists a positive solution to
\begin{equation}\label{1.3}
\left \{
\begin{array}{lcr}
 -\Delta u-\alpha u=|u|^{p-2}u &{\rm in}& \Omega^{\circ}\\
  u=0 &{\rm in}& \partial \Omega\\
\end{array}
\right.
\end{equation}
for any $p>2$ and finite $\Omega \subset V$. On a finite graph $G = (V, E)$, they proved that there exists a positive solution to
\begin{equation}\label{1.4}
\left \{
\begin{array}{lcr}
 -\Delta u+ h(x)u=|u|^{p-2}u &{\rm in}& V\\
  u>0&{\rm in}& V\\
\end{array}
\right.
\end{equation}
for any $p>2$ and $h(x)>0$.
Moreover, they obtained that there exists a strictly positive solution to
\begin{equation}\label{1.5}
\Delta u+ h(x)u=f(x,u)
\end{equation}
for certain positive function $h(x)$ and certain nonlinearity $f(x,u)$. They also considered the existence of positive solutions of the
perturbed equation
\begin{equation}\label{1.6}
\Delta u+ hu=f(x,u)+\epsilon g.
\end{equation}
In \cite{GAL2}, Y. Lin and Y. Yang introduced a heat flow method of solving the Kazdan-Warner problem.
In \cite{GAL1}, Y. Lin and Y. Yang studied the following mean field equation
\begin{equation}\label{buchong}
\left \{
\begin{array}{lcr}
 -\Delta u+ h(x)u=\frac{ge^{u}}{\int_{V}ge^{u}d\mu}-f(x)~~in~V\\
  u\in \mathfrak{H}\cap L^{\infty}(V)
\end{array}
\right.
\end{equation}
on locally finite graphs and obtained a global solution to the equation.
In \cite{shoudong1}, by establishing an eigenfunction space $\mathcal{H}$, we proved that there exists a nontrivial solution $u\in\mathcal{H}$ to
one type of nonlinear Schr\"odinger equation on finite graphs. In \cite{shoudong2}, we considered a class of quasilinear elliptic equation with
indefinite weights on a locally finite graph, and obtained the existence of a positive solution by using variational methods.
For more results about differential equations on graphs, we refer the readers
to \cite{HUABIN,BIN1,BIN2,GLA,GAL,GAL4,BOBO,GAL1,GAL2,GAL3,LIN,XA,NZL,ZD}.

Nonlinear elliptic systems have extensive practical applications in dynamics, biology, engineering, physics and the other sciences.
On the Euclidean space and Riemannian manifold, the investigation of nonlinear differential equation systems
have deserved a great deal of interest. For examples, the readers are referred to \cite{Correa,Guofeng,Costa,Finueirdo,Scientia,Huei,Robertson, Dongdong}
and the references therein.

In this paper, we study the problems of nonlinear elliptic systems by replacing a Riemann surface or an Euclidean domain with a graph.
We extend the existence results for the Toda system on closed Riemann surface to graphs.
We also extend the existence results for one single equation such as (\ref{1.3})-(\ref{buchong}) to nonlinear elliptic systems on graphs.
On the Euclidean space, say $\Omega\subset \mathbb{R}^{n}$, the Sobolev embedding theorem reads
\begin{equation*}
W^{1,q}_{0}(\Omega)\hookrightarrow
\left \{
\begin{array}{lcr}
 L^{p}(\Omega) \ \ for \ \  q\leq p\leq q^{*}, \ \ when \ \ n>q,\\
 L^{p}(\Omega) \ \ for \ \  q\leq p\leq +\infty, \ \ when \ \ n=q, \\
  C^{1-\frac{n}{q}}(\bar{\Omega}), \ \  when \ \ n<q .\\
\end{array}
\right.
\end{equation*}
However, the Sobolev embedding theorems on graphs are quite different. Letting $G_{0}=(V_{0},E_{0})$ be a locally finite graph and $\Omega$ be a bounded domain of $V_{0}$ with $\Omega^{\circ}\neq\emptyset$, the Sobolev embedding theorem on $G_{0}$ reads
\begin{equation*}
W^{1,s}_{0}(\Omega)\hookrightarrow
 L^{\gamma}(\Omega) \ \ for \ \  s>1\ and\ \ 1\leq \gamma\leq+\infty.
\end{equation*}
While on a finite graph $G=(V,E)$, the Sobolev embedding theorem reads
\begin{equation*}
W^{1,s}(V)\hookrightarrow
 L^{\gamma}(V) \ \ for \ \  s>1\ and\ \ 1\leq \gamma\leq+\infty.
\end{equation*}
Hence, it allows us to assume different growth conditions on the nonlinear term $f$ and $g$ on graphs, and our results are different from that of \cite{Scientia,Huei} in the case $p=2^{*}$.

On graphs, in 2022, L. Zhao \cite{L.Zhao} and M. Shao\cite{M.Shao} systematically raised and studied the Laplacian system and the p-Laplacian system respectively. However, up to now, there are very few results about differential equation systems on graphs in this connection.
In this paper, we investigate the global existence results and the local existence results for the Toda system and the other nonlinear elliptic
systems involving the Laplacian, the p-Laplacian and the poly-Laplacian.
Particularly, we establish some eigenfunction spaces, and prove that there exist some solutions which belong to these eigenfunction spaces for
the Toda system.

This paper is organized as follows: In section 2, we give some notations and settings on graphs. In section 3, we state our
main theorems. In section 4, we give some preliminary results. In section 5-7, we prove the main theorems in section 3.

\section{Notations and settings}\label{sub001}
Let $G = (V, E)$ be a finite or locally finite graph, where $V$ denotes the vertex set and $E$ denotes the edge set. The degree of vertex $x$, denoted by $\psi(x)$, is the number of edges connected to $x$.
If for every vertex $x$ of $V$, the number of edges connected to $x$ is finite, we say that $G$ is a locally finite graph. We denote $x\sim y$ if vertex $x$ is adjacent to vertex $y$. We use $(x,y)$ to denote an edge in $E$ connecting vertices $x$ and $y$.
Let $\omega_{xy}=\omega_{yx}>0$ where $\omega_{xy}$ is the edge weight. The finite measure $\psi(x)=\sum_{y\sim x}\omega_{xy}$.
A graph $G$ is called connected if for any vertices $x,y\in V$, there exists a sequence $\{x_{i}\}_{i=0}^{n}$ satisfying
$x=x_{0}\sim x_{1}\sim x_{2}\sim \cdot\cdot\cdot \sim x_{n}=y$.

From \cite{AGY}, for any function $u:V\rightarrow \mathbb{R}$, the $\psi(x)$-Laplacian of $u$ is defined as
\begin{equation*}\label{86.1}
\Delta u(x)=\frac{1}{\psi(x)}\sum_{y\sim x}\omega_{xy}\big{(}u(y)-u(x)\big{)}.
\end{equation*}
The associated gradient form reads
 \begin{align*}
\Gamma(u,v)(x)=&\frac{1}{2}\big{\{}\Delta\big{(}u(x)v(x)\big{)}-u(x)\Delta v(x)-v(x)\Delta u(x)\big{\}} \\
=&\frac{1}{2\psi(x)}\sum_{y\sim x} \omega_{xy}\big{(}u(y)-u(x)\big{)}\big{(}v(y)-v(x)\big{)}.
\end{align*}
The length of the gradient for $u$ is
\begin{equation*}\label{86.5}
|\nabla u|(x)=\sqrt{\Gamma(u,u)(x)}=\Big{(}\frac{1}{2\psi(x)}\sum_{y\sim x} \omega_{xy}\big{(}u(y)-u(x)\big{)}^{2}\Big{)}^{1/2}.
\end{equation*}
Similarly, according to \cite{AGY}, the length of the m-order gradient of $u$ is
\begin{equation}\label{86.6}
|\nabla^{m} u|(x)=
\left \{
\begin{array}{lcr}
|\nabla \Delta^{\frac{m-1}{2}} u|, \ \ {\rm when} \ m \ {\rm is} \ {\rm odd},\\
|\Delta^{\frac{m}{2}} u|, \ \ {\rm when} \ m \ {\rm is} \ {\rm even},\\
\end{array}
\right.
\end{equation}
where $|\Delta^{\frac{m}{2}} u|$  denotes the usual
absolute of the function $\Delta^{\frac{m}{2}} u$.
To compare with the Euclidean setting, we denote the integral of a function $u: V \rightarrow
\mathbb{R}$ as
\begin{equation*}\label{86.9}
\int_{V}ud\psi=\sum_{x\in V}\psi(x)u(x).
\end{equation*}

By \cite{Fan}, all eigenvalues of the Laplacian $-\Delta$ on $G=(V,E)$ are non-negative. The minimum positive eigenvalue also called the first positive eigenvalue can be defined as
\begin{align}\label{1.12}
\lambda_{1}=\inf_{u\perp T\mathbf{1}} \frac{\sum_{x,y\in V, y\sim x}\big{(}u(y)-u(x)\big{)}^{2}\omega_{xy}}{\sum_{x\in V}u^{2}(x)\psi(x)}
=\inf_{u\perp T\mathbf{1}}\frac{\int_{V}|\nabla u|^{2}d\psi}{\int_{V}u^{2}d\psi},
\end{align}
where $T\mathbf{1}$ is a vector whose each element is the degree of the corresponding vertex.
The nontrivial function $u$ that achieves (\ref{1.12}) is called a
harmonic eigenfunction of $-\Delta$ on $G$ with eigenvalue $\lambda_{1}$.
For more details, we refer the readers to \cite{Fan}.

By Lemma 1.10 in \cite{Fan},
if $u(x)$ is a harmonic eigenfunction achieving $\lambda_{1}$ in (\ref{1.12}),
then, for any vertex $x\in V$, we have
\begin{align}\label{1.13}
-\Delta u(x)=\frac{1}{\psi(x)}\sum_{y\sim x}\omega_{xy}\big{(}u(x)-u(y)\big{)}=\lambda_{1} u(x).
\end{align}
If $v(x)$ is also a harmonic eigenfunction with $\lambda_{1}$, it is easy to see that $u(x)+v(x)$ and $ku(x)$ are also harmonic eigenfunctions with $\lambda_{1}$, where $k$ is a constant. If $u(x)\equiv 0$, by (\ref{1.13}), $u(x)$ can be still seen as a harmonic eigenfunction with $\lambda_{1}$.

By \cite{AGY}, in the distributional sense, the $p$-Laplacian of $u:V\rightarrow \mathbb{R}$, namely $\Delta_{p}u$, is defined as
\begin{align}\label{Lap}
\Delta_{p} u(x)=\frac{1}{2\psi(x)}\sum_{y\sim x}\omega_{xy}\Big{(}|\nabla u|^{p-2}(y)+|\nabla u|^{p-2}(x)\Big{)}\Big{(}u(y)-u(x)\Big{)}.
\end{align}
Let $u:V\rightarrow \mathbb{R}$ and $\mathcal{L}_{m,p}u$ can be defined in the distributional sense:
for any function $\phi$, there holds
\begin{equation}\label{88.6}
\int_{V}(\mathcal{L}_{m,p}u)\phi d\psi=
\left \{
\begin{array}{lcr}
\int_{V}|\nabla^{m}u|^{p-2} \Gamma(\Delta^{\frac{m-1}{2}}u, \Delta^{\frac{m-1}{2}}\phi)d\psi, \ \ {\rm when} \ m \ {\rm is} \ {\rm odd},\\
\int_{V}|\nabla^{m}u|^{p-2} \Delta^{\frac{m}{2}}u\Delta^{\frac{m}{2}}\phi d\psi, \ \ {\rm when} \ m \ {\rm is} \ {\rm even}.\notag
\end{array}
\right.
\end{equation}
In particular, the poly-Laplacian $\mathcal{L}_{m,2}u=(-\Delta)^{2}u$, while $\mathcal{L}_{1,2}u=-\Delta u$.
For any $s>0$,  $L^{s}(V)$ denotes the Banach space with the norm
\begin{equation}\label{866.11}
||u||_{L^{s}(V)}=(\int_{V}|u|^{s}d\psi)^{1/s},
\end{equation}
and $L^{\infty}(V)$ means
\begin{equation}\label{866.121212}
||u||_{L^{\infty}(V)}=\sup_{x\in V}|u(x)|<\infty.
\end{equation}
For any $p>1$ and $m\geq 1$,  $W^{m,p}(V)$ is defined as a space of all functions $u: V \rightarrow \mathbb{R}$ under the norm
\begin{equation}\label{866.10}
||u||_{W^{m,p}(V)}=\Big{(}\int_{V}\big{(}|\nabla^{m}u|^{p}+h(x)|u|^{p}\big{)}d\psi\Big{)}^{\frac{1}{p}},
\end{equation}
where $h(x)>0$ for all $x\in V$. When $p=2$ and $h(x)\equiv 1$ for all $x\in V$ in (\ref{866.10}), for any integer $m\geq 1$, we define a new subspace of $W^{m,2}(V)$ denoted by $\mathcal{H}^{m,2}(V)$ that satisfies
\begin{align}\label{tod5}
\mathcal{H}^{m,2}(V)=\{u\in W^{m,2}(V)\mid -\Delta u=\lambda_{1} u\}.
\end{align}
The norm on $\mathcal{H}^{m,2}(V)$ is defined as
\begin{equation*}
||u||_{\mathcal{H}^{m,2}(V)}=\Big{(}\int_{V}\big{(}|\nabla^{m}u|^{2}+|u|^{2}\big{)}d\psi\Big{)}^{\frac{1}{2}}.
\end{equation*}
Let $\Omega \subset V$ and $\Omega$ be a bounded domain. The boundary of $\Omega$ is denoted by $\partial\Omega$,
where $\partial\Omega=\{x\in \Omega: \exists y\not\in \Omega ~~{\rm such} ~~{\rm that} ~~(x,y)\in E \}$. Moreover, the interior of $\Omega$ is denoted by
$\Omega^{\circ}$ that satisfies $\Omega^{\circ}=\Omega\setminus \partial\Omega$. For any $p>1$ and integer $m\geq 1$, $W^{m,p}(\Omega)$ is defined as a space of all functions $u: \Omega \rightarrow \mathbb{R}$ that satisfies
\begin{equation}\label{86.10}
||u||_{W^{m,p}(\Omega)}=\Big{(}\sum^{m}_{k=0}\int_{\Omega}|\nabla^{k}u|^{p}d\psi\Big{)}^{\frac{1}{p}}<+\infty.
\end{equation}
Denote $C^{m}_{0}(\Omega)$ as a set of all functions $u : \Omega \rightarrow \mathbb{R}$ with $u = |\nabla u| = \cdot\cdot\cdot = |\nabla^{m-1} u| = 0$ on $\partial\Omega$,
and $W_{0}^{m,p}$ as the completion of $C^{m}_{0}(\Omega)$ under the norm (\ref{86.10}). It is easy to see that
\begin{equation}\label{86.11}
||u||_{W_{0}^{m,p}(\Omega)}=\Big{(}\int_{\Omega}|\nabla^{m}u|^{p}d\psi\Big{)}^{\frac{1}{p}}.
\end{equation}
 For any integer $m\geq 1$, we define a new subspace of $W_{0}^{m,2}(\Omega)$ denoted by $\mathcal{H}_{0}^{m,2}(\Omega)$ that satisfies
\begin{align}\label{tod5}
\mathcal{H}_{0}^{m,2}(\Omega)=\{u\in W_{0}^{m,2}(\Omega)\mid -\Delta u=\lambda_{1} u\}.
\end{align}
The norm on $\mathcal{H}_{0}^{m,2}(\Omega)$ is defined as
\begin{equation*}
||u||_{\mathcal{H}_{0}^{m,2}(\Omega)}=\Big{(}\int_{\Omega}\big{(}|\nabla^{m}u|^{2}\big{)}d\psi\Big{)}^{\frac{1}{2}}.
\end{equation*}
The $\mathcal{L}_{m,p}u$ with $u:\Omega\rightarrow \mathbb{R}$ can be defined as follows:
for any $\phi$ with $\phi=|\nabla \phi|=\cdot\cdot\cdot=|\nabla^{m-1} \phi|=0$ on $\partial\Omega$, there holds
\begin{equation}\label{pol}
\int_{\Omega}(\mathcal{L}_{m,p}u)\phi d\psi=
\left \{
\begin{array}{lcr}
\int_{\Omega}|\nabla^{m}u|^{p-2} \Gamma(\Delta^{\frac{m-1}{2}}u, \Delta^{\frac{m-1}{2}}\phi)d\psi, \ \ {\rm when} \ m \ {\rm is} \ {\rm odd},\\
\int_{\Omega}|\nabla^{m}u|^{p-2} \Delta^{\frac{m}{2}}u\Delta^{\frac{m}{2}}\phi d\psi,  \ \ \ \ \ \ {\rm when} \  m \ {\rm is} \ {\rm even}.\\
\end{array}
\right.
\end{equation}

By \cite{GAL1}, on graph $G =(V,E)$, for any fixed $O\in V$, the distance between vertex $x$ and $O$, denoted by $\rho(x)=\rho(x,O)$, is the minimum number of edges connecting them.

\section{Main theorems about the existence results on graphs}

In this section, we investigate the existence results in three cases, that is, the global existence results for the Toda system,
the local existence results and the global existence results for the other nonlinear elliptic
systems involving the Laplacian, the p-Laplacian and the
poly-Laplacian.

\subsection{Global existence results for the Toda system on graphs}\label{tezhengzhi}
By \cite{Jost,chunqin}, the Toda system is the system of $2\times 2$ Liouville equations on $\Sigma$

\begin{equation}\label{tod5a1}
\left \{
\begin{array}{lcr}
-\Delta u_{1}=\varphi_{1}(\frac{e^{2u_{1}-u_{2}}}{\int_{\Sigma}e^{2u_{1}-u_{2}}dv}-1), \\
-\Delta u_{2}=\varphi_{2}(\frac{e^{-u_{1}+2u_{2}}}{\int_{\Sigma}e^{-u_{1}+2u_{2}}dv}-1), \\
\int_{\Sigma}u_{1}dv=0,\\
\int_{\Sigma}u_{2}dv=0\\
 \end{array}
\right.
\end{equation}
where $\Sigma$ is a closed Riemann surface. In \cite{Jost}, the authors proved that there exists a solution to the system (\ref{tod5a1}).
When $u_{1}=u_{2}$, each of the first two equations in this system can be viewed as the same as the equation in (\ref{buchong}) in the case $f(x)\equiv 1$
and $h(x)\equiv 0$.

Here we investigate the global existence results for system (\ref{tod5a1}) on finite and locally finite graphs respectively.

Let $G=(V,E)$ denote a connected finite or locally finite graph.
For any integer $m,n\geq 1$, let $\mathcal{H}=\mathcal{H}^{m,2}(V)\times \mathcal{H}^{n,2}(V)=\{(u,v)\in W^{m,2}(V)\times W^{n,2}(V)\mid -\Delta u=\lambda_{1} u, -\Delta v=\lambda_{1} v \} $ be the space with the norm
\begin{align*}
||(u,v)||_{\mathcal{H}}=\max\Big{\{} ||u||_{\mathcal{H}^{m,2}(V)}, ||v||_{\mathcal{H}^{n,2}(V)}\Big{\}}.
\end{align*}
From \cite{YLS} (see page 353) or by the formula of integration by parts on graphs, if $u$ and $v$ are harmonic eigenfunctions of $-\Delta$ on $G$ with eigenvalue $\lambda_{1}$,
that is $(u,v)\in \mathcal{H}$, we have that
\begin{align}\label{toooo6}
\int_{V}ud\psi=0~~ and ~~\int_{V}vd\psi=0.
\end{align}
Thus, analogous to (\ref{tod5a1}), when $u$ and $v$ are harmonic eigenfunctions of $-\Delta$ on $G$ with eigenvalue $\lambda_{1}$, it is equal to consider the following system on graphs
\begin{equation}\label{EEERRR}
\left \{
\begin{array}{lcr}
 \mathcal{L}_{m,2} u=\varphi_{1}(\frac{e^{2u-v}}{\int_{V}e^{2u-v}d\psi}-1) ~ &{\rm in}& V,\\
\mathcal{L}_{n,2} v=\varphi_{2}(\frac{e^{-u+2v}}{\int_{V}e^{-u+2v}d\psi}-1)~  &{\rm in}& V,
 \end{array}
\right.
\end{equation}
where $\varphi_{1}$ and $\varphi_{2}$ are two positive constants.

\begin{theorem}\label{EEET23}
Suppose that $G =(V,E)$ is a finite graph. For all $ x\in V$, assume that the distance function $\rho(x) = \rho(x, O) \in L^{q}(V)$ for some $q > 0$ and $O \in V$, and there exists a constant $\omega_{0}>0$ such that $\omega_{xy}\geq\omega_{0}$ for all $y\sim x$. Let $u$ and $v$ be two harmonic eigenfunctions of $-\Delta$ on $G$ with eigenvalue $\lambda_{1}$.
Then for any integer $m,n\geq 1$, there exists a solution $(u,v)\in \mathcal{H}$ to the problem $(\ref{EEERRR})$.
\end{theorem}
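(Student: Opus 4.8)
The plan is to turn the theorem into a finite dimensional variational problem on $\mathcal{H}$. Since $G$ is finite, the $\lambda_{1}$-eigenspace of $-\Delta$ is a finite dimensional linear space, so $\mathcal{H}=\mathcal{H}^{m,2}(V)\times\mathcal{H}^{n,2}(V)$ is finite dimensional (the exponents $m,n$ affect only the norm, and all norms on a finite dimensional space are equivalent); moreover the two standing hypotheses, $\rho\in L^{q}(V)$ and $\omega_{xy}\ge\omega_{0}$, are automatic on a finite graph (the $L^{q}$-norm is a finite sum, and $\omega_{0}:=\min_{(x,y)\in E}\omega_{xy}>0$), being really needed only for the locally finite counterparts. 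For $(u,v)\in\mathcal{H}$ one has $\int_{V}u\,d\psi=\int_{V}v\,d\psi=0$ by $(\ref{toooo6})$, and, since $-\Delta u=\lambda_{1}u$ and $-\Delta v=\lambda_{1}v$, iterated integration by parts in the distributional definition of $\mathcal{L}_{m,2}$ gives $\mathcal{L}_{m,2}u=(-\Delta)^{m}u=\lambda_{1}^{m}u$ and $\mathcal{L}_{n,2}v=\lambda_{1}^{n}v$. Thus $(\ref{EEERRR})$, restricted to $\mathcal{H}$, becomes the finite dimensional system $\lambda_{1}^{m}u=\varphi_{1}\big(e^{2u-v}/\!\int_{V}e^{2u-v}d\psi-1\big)$, $\lambda_{1}^{n}v=\varphi_{2}\big(e^{-u+2v}/\!\int_{V}e^{-u+2v}d\psi-1\big)$, which is what I would solve.

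Next I would introduce the Toda-type energy functional $J\colon\mathcal{H}\to\mathbb{R}$,
\[
J(u,v)=\tfrac12\int_{V}(\mathcal{L}_{m,2}u)u\,d\psi+\tfrac12\int_{V}(\mathcal{L}_{n,2}v)v\,d\psi+\varphi_{1}\int_{V}u\,d\psi+\varphi_{2}\int_{V}v\,d\psi-\varphi_{1}\log\!\int_{V}e^{2u-v}d\psi-\varphi_{2}\log\!\int_{V}e^{-u+2v}d\psi ,
\]
which is of class $C^{1}$ on $\mathcal{H}$ because the exponential integrals are smooth and strictly positive. Computing the Gateaux derivative of $J$ in directions of $\mathcal{H}$, using $\mathcal{L}_{m,2}u=\lambda_{1}^{m}u$ and $\int_{V}\phi\,d\psi=0$ for $\phi\in\mathcal{H}$, one checks that critical points of $J$ on $\mathcal{H}$ are exactly the solutions of $(\ref{EEERRR})$ lying in $\mathcal{H}$, in the same way $(\ref{tod5a1})$ was rewritten as $(\ref{EEERRR})$; the linear terms of $J$, although they vanish on $\mathcal{H}$, are kept so that the derivative reproduces the $-1$ in $(\ref{EEERRR})$. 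Making this Euler--Lagrange identification precise is the step I expect to be the main obstacle, since the coupling through the two exponentials $e^{2u-v}$ and $e^{-u+2v}$ makes the bookkeeping delicate.

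Finally I would prove $J$ is coercive and bounded below on $\mathcal{H}$, so that it attains its infimum. The decisive point is that on a finite graph the exponential (``Moser--Trudinger'') term grows only linearly: for $(u,v)\in\mathcal{H}$ we have $\int_{V}(2u-v)\,d\psi=0$, hence $\max_{V}(2u-v)\ge0$ and $\int_{V}e^{2u-v}d\psi\ge\min_{x}\psi(x)>0$, while $\int_{V}e^{2u-v}d\psi\le\big(\sum_{x}\psi(x)\big)e^{\max_{V}(2u-v)}$ with $\max_{V}(2u-v)\le C\|(u,v)\|_{\mathcal{H}}$ by equivalence of norms on the finite dimensional $\mathcal{H}$; thus $\big|\log\int_{V}e^{2u-v}d\psi\big|\le C'(1+\|(u,v)\|_{\mathcal{H}})$, and likewise for the other exponential. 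On the other hand the quadratic part equals $\tfrac{\lambda_{1}^{m}}{2}\|u\|_{L^{2}(V)}^{2}+\tfrac{\lambda_{1}^{n}}{2}\|v\|_{L^{2}(V)}^{2}\ge c_{0}\|(u,v)\|_{\mathcal{H}}^{2}$ since $\lambda_{1}>0$. Combining, $J(u,v)\ge c_{0}\|(u,v)\|_{\mathcal{H}}^{2}-C'\|(u,v)\|_{\mathcal{H}}-C''\to+\infty$ as $\|(u,v)\|_{\mathcal{H}}\to\infty$, for \emph{every} $\varphi_{1},\varphi_{2}>0$ --- in contrast with the closed Riemann surface case, where a threshold on $\varphi_{i}$ is unavoidable. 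Being continuous on the finite dimensional $\mathcal{H}$, $J$ attains its minimum at some $(u_{0},v_{0})\in\mathcal{H}$, which is a critical point of $J$, hence a solution of $(\ref{EEERRR})$ in $\mathcal{H}$, which is the assertion. In short, the analytic core (coercivity) is soft precisely because $\mathcal{H}$ is finite dimensional; the real care goes into the reduction $\mathcal{L}_{m,2}u=\lambda_{1}^{m}u$ on $\mathcal{H}$ and the Euler--Lagrange matching with $(\ref{EEERRR})$.
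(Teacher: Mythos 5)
Your plan is essentially the paper's proof: a direct minimization of a Toda-type energy over $\mathcal{H}$. The paper works with $J_{1}(u,v)=\frac12\int_{V}|\nabla^{m}u|^{2}d\psi+\frac12\int_{V}|\nabla^{n}v|^{2}d\psi-\frac12\varphi_{1}\log\int_{V}e^{2u-v}d\psi-\frac12\varphi_{2}\log\int_{V}e^{-u+2v}d\psi$, obtains boundedness from below and boundedness of a minimizing sequence from the embedding $\|u\|_{L^{\infty}(V)}\le C\|u\|_{\mathcal{H}^{m,2}(V)}$ of Lemma \ref{LEMMA1} (this is where the hypotheses $\rho\in L^{q}(V)$ and $\omega_{xy}\ge\omega_{0}$ enter), extracts a convergent subsequence by pre-compactness, and checks that the limit remains in $\mathcal{H}$. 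Your substitute --- $\mathcal{H}$ is a finite-dimensional eigenspace, so coercivity, the $L^{\infty}$ bound and attainment of the infimum are all soft --- is correct on a finite graph and makes the hypotheses on $\rho$ and $\omega_{0}$ vacuous there, as you note; your identity $\mathcal{L}_{m,2}u=\lambda_{1}^{m}u$ on $\mathcal{H}$ is also correct. So the analytic core of your argument reproduces the paper's, by a shorter route.

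Two concrete caveats. First, your functional is misnormalized: the paper's factor $\frac12$ in front of each logarithm is what makes the $u$-variation of $-\frac12\varphi_{1}\log\int_{V}e^{2u-v}d\psi$ equal to $-\varphi_{1}\big(\int_{V}\phi\,e^{2u-v}d\psi\big)\big/\big(\int_{V}e^{2u-v}d\psi\big)$; with your coefficient $\varphi_{1}$ you get twice that. Moreover, the linear terms $\varphi_{1}\int_{V}u\,d\psi+\varphi_{2}\int_{V}v\,d\psi$ have zero derivative in every direction $\phi\in\mathcal{H}$ (since $\int_{V}\phi\,d\psi=0$), so they cannot ``reproduce the $-1$'' for a critical point of the restriction $J|_{\mathcal{H}}$. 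Second, the Euler--Lagrange identification you single out as the main obstacle genuinely is one, and your sketch does not close it (nor does the paper): a minimizer of $J$ over the subspace $\mathcal{H}$ only yields the equation tested against variations in $\mathcal{H}$, i.e.\ modulo the orthogonal complement of the eigenspace; and the $u$-derivative of the $\log\int_{V}e^{-u+2v}d\psi$ term contributes a multiple of $e^{-u+2v}\big/\int_{V}e^{-u+2v}d\psi$ to the first equation that is absent from (\ref{EEERRR}) (the classical Toda functional avoids this by coupling the gradient terms through the inverse Cartan matrix). To make the argument complete you must either exhibit a functional whose unconstrained critical points solve (\ref{EEERRR}) exactly, or justify why criticality along $\mathcal{H}$ suffices; as written, this step is asserted rather than proved, in your proposal and in the paper alike.
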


Now, we consider the problem (\ref{EEERRR}) on a locally finite graph.
Firstly, for any integer $k\geq 1$, we define a sequence of balls centered at $O$ with radius $k$ by
\begin{equation*}\label{866.13}
V_{k}=V_{k}(O)=\{x\in V\mid \rho(x)<k\}.
\end{equation*}
The boundary of $V_{k}$ can be defined as
\begin{equation*}\label{866.17}
\partial V_{k}=\{x\in V\mid \rho(x)=k\}.
\end{equation*}
Our next result is as follows:
\begin{theorem}\label{EEET238}
Suppose that $G =(V,E)$ is a locally finite graph. For all $ x\in V$, assume that the distance function $\rho(x) = \rho(x, O) \in L^{q}(V)$ for some $q > 0$ and $O \in V$, and there exists a constant $\omega_{0}>0$ such that $\omega_{xy}\geq\omega_{0}$ for all $y\sim x$.
Let $u$ and $v$ be two harmonic eigenfunctions of $-\Delta$ on $G$ with eigenvalue $\lambda_{1}$.
Then for any integer $m,n\geq 1$, there exists a solution $(u,v)\in \mathcal{H}$ to the problem $(\ref{EEERRR})$.
\end{theorem}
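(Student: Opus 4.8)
The plan is to solve (\ref{EEERRR}) by the direct method of the calculus of variations on the Hilbert space $\mathcal{H}=\mathcal{H}^{m,2}(V)\times\mathcal{H}^{n,2}(V)$, in parallel with the proof of Theorem~\ref{EEET23}, while isolating the two points at which the locally finite setting (as opposed to the finite one) is not automatic. I would first record three consequences of the hypotheses. (a) Since $\rho(x)\geq 1$ for every $x\neq O$, the assumption $\rho\in L^{q}(V)$ forces $\sum_{x\neq O}\psi(x)\leq\int_{V}\rho^{q}\,d\psi<\infty$, so $V$ has finite volume $\mathrm{Vol}(V):=\int_{V}1\,d\psi<\infty$. (b) Under $\rho\in L^{q}(V)$ and $\omega_{xy}\geq\omega_{0}>0$, the Sobolev embedding on locally finite graphs established in Section~4 gives $W^{m,2}(V)\hookrightarrow L^{\gamma}(V)$ for all $1\leq\gamma\leq\infty$; in particular $\|u\|_{L^{\infty}(V)}\leq C\|u\|_{W^{m,2}(V)}$, and, since $\psi(x)>0$ at every vertex, any sequence bounded in $W^{m,2}(V)$ has a subsequence converging at every vertex. (c) On $\mathcal{H}^{m,2}(V)$ one has $\mathcal{L}_{m,2}u=(-\Delta)^{m}u=\lambda_{1}^{m}u$, hence $\int_{V}|\nabla^{m}u|^{2}\,d\psi=\lambda_{1}^{m}\int_{V}u^{2}\,d\psi$ and $\|\cdot\|_{\mathcal{H}^{m,2}(V)}$ is equivalent to $\|\cdot\|_{L^{2}(V)}$ on that space; moreover, by (\ref{toooo6}), every $u\in\mathcal{H}^{m,2}(V)$ satisfies $\int_{V}u\,d\psi=0$, and symmetrically for $\mathcal{H}^{n,2}(V)$.

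Next I would introduce the energy functional $J:\mathcal{H}\to\mathbb{R}$,
\begin{equation*}
J(u,v)=\frac{\lambda_{1}^{m}}{2}\Big(\int_{V}u^{2}\,d\psi-\int_{V}uv\,d\psi+\int_{V}v^{2}\,d\psi\Big)-\frac{\varphi_{1}}{2}\log\int_{V}e^{2u-v}\,d\psi-\frac{\varphi_{2}\lambda_{1}^{m-n}}{2}\log\int_{V}e^{-u+2v}\,d\psi,
\end{equation*}
whose coefficients are dictated, via the invertibility of the Cartan matrix $\big(\begin{smallmatrix}2&-1\\-1&2\end{smallmatrix}\big)$ and the identities in (c), by the requirement that a critical point of $J$ solve (\ref{EEERRR}). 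A direct computation of the G\^ateaux derivative shows that $\langle J'(u,v),(\phi,\zeta)\rangle=0$ for all $(\phi,\zeta)\in\mathcal{H}$ is equivalent, after using $\int_{V}\phi\,d\psi=\int_{V}\zeta\,d\psi=0$ to normalize the additive constant, to the two weak identities that define (\ref{EEERRR}) on $\mathcal{H}$; since a graph poses no regularity obstruction, such a weak solution is a genuine solution. As the quadratic form $\int_{V}u^{2}-\int_{V}uv+\int_{V}v^{2}$ is positive definite, the natural critical point to seek is a global minimizer.

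Accordingly I would verify the two hypotheses of the direct method. Coercivity: the quadratic part of $J$ dominates $c\|(u,v)\|_{\mathcal{H}}^{2}$, while $\int_{V}e^{2u-v}\,d\psi\leq\mathrm{Vol}(V)\,e^{2\|u\|_{L^{\infty}}+\|v\|_{L^{\infty}}}\leq\mathrm{Vol}(V)\,e^{C\|(u,v)\|_{\mathcal{H}}}$ (and likewise for the other exponential) shows the logarithmic terms are bounded above by $C'\|(u,v)\|_{\mathcal{H}}+C''$, so $J(u,v)\geq c\|(u,v)\|_{\mathcal{H}}^{2}-C'\|(u,v)\|_{\mathcal{H}}-C''\to+\infty$. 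Weak lower semicontinuity: the quadratic part is convex and continuous, hence weakly lower semicontinuous; if $(u_{k},v_{k})\rightharpoonup(u,v)$ in $\mathcal{H}$, then by (b) the sequences are bounded in $L^{\infty}(V)$ and converge at every vertex, while Jensen's inequality with $\int_{V}(2u_{k}-v_{k})\,d\psi=0$ gives $\int_{V}e^{2u_{k}-v_{k}}\,d\psi\geq\mathrm{Vol}(V)>0$ uniformly, so dominated convergence (legitimate because $\mathrm{Vol}(V)<\infty$) yields $\log\int_{V}e^{2u_{k}-v_{k}}\,d\psi\to\log\int_{V}e^{2u-v}\,d\psi$, and likewise for the companion term; hence $J$ is weakly lower semicontinuous. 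Being coercive and weakly lower semicontinuous on the Hilbert space $\mathcal{H}$, $J$ attains its infimum at some $(u,v)\in\mathcal{H}$, the sought solution of (\ref{EEERRR}) (nontrivial whenever $\mathrm{Vol}(V)\neq 1$, since then $(0,0)$ fails (\ref{EEERRR})).

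The main obstacle is observation (b): on a finite graph $W^{m,2}(V)$ is finite dimensional and every step above is immediate, whereas on a locally finite graph one genuinely needs $\rho\in L^{q}(V)$ and $\omega_{xy}\geq\omega_{0}$ to secure finite volume and the $L^{\infty}$ Sobolev embedding, hence the vertexwise compactness of bounded sequences that drives the weak lower semicontinuity. Proving that embedding is the substantive work and is precisely the Sobolev theorem announced in the introduction and established in Section~4; granting it, the argument coincides with that of Theorem~\ref{EEET23}, the only further care needed being the coefficient bookkeeping in $J$ and the use of $\int_{V}\phi\,d\psi=0$ to absorb the inhomogeneous $-1$ in (\ref{EEERRR}).
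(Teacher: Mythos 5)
Your argument is essentially correct, but it takes a genuinely different route from the paper's. The paper treats the locally finite case by exhaustion: it minimizes a truncated functional over $\mathcal{H}_{0}^{m,2}(V_{k})\times\mathcal{H}_{0}^{n,2}(V_{k})$ on the balls $V_{k}$, solves a Dirichlet-type Euler--Lagrange system on each $V_{k}$, derives bounds independent of $k$, and extracts a diagonal subsequence converging on every finite subset before passing to the limit in the equations. You instead run the direct method once, globally on $\mathcal{H}$, and the hypotheses do license this: $\rho\geq 1$ off $O$ together with $\rho\in L^{q}(V)$ gives finite volume, and Lemma \ref{LEMMA1} supplies the $L^{\infty}$ bound and vertexwise compactness. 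In fact, since $\omega_{xy}\geq\omega_{0}$ forces $\psi(x)\geq\psi_{0}>0$, the hypothesis $\rho\in L^{q}(V)$ forces $\#V<\infty$, so under the stated assumptions the ``locally finite'' case collapses to the finite one and the exhaustion machinery is dispensable; your route makes that explicit. A second, more substantive difference is your choice of functional. The paper reuses the decoupled functional $\frac12\int|\nabla^{m}u|^{2}+\frac12\int|\nabla^{n}v|^{2}-\frac{\varphi_{1}}{2}\log\int e^{2u-v}-\frac{\varphi_{2}}{2}\log\int e^{-u+2v}$; if one writes $A_{\phi}=\lambda_{1}^{m}\int u\phi\,d\psi-\varphi_{1}\frac{\int\phi e^{2u-v}d\psi}{\int e^{2u-v}d\psi}$ and $B_{\phi}=\lambda_{1}^{n}\int v\phi\,d\psi-\varphi_{2}\frac{\int\phi e^{-u+2v}d\psi}{\int e^{-u+2v}d\psi}$ for the two residuals of (\ref{EEERRR}) tested against $\phi$, the criticality conditions for that functional contain uncancelled cross terms in $A$ and $B$ and do not visibly reduce to $A=B=0$. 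Your Cartan-weighted functional repairs this: the resulting linear system in $(A_{\phi},B_{\phi})$ has determinant $\frac34\lambda_{1}^{m-n}\neq 0$, so criticality genuinely forces both residuals to vanish against every admissible test pair. That is an improvement, not merely a stylistic variant.

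One caveat, which your write-up shares with the paper but should be flagged: the admissible variations are constrained to the eigenspace $\mathcal{H}$, so the vanishing of the derivative at the minimizer only shows that the pointwise residuals $\lambda_{1}^{m}u-\varphi_{1}\big(\frac{e^{2u-v}}{\int e^{2u-v}d\psi}-1\big)$ and $\lambda_{1}^{n}v-\varphi_{2}\big(\frac{e^{-u+2v}}{\int e^{-u+2v}d\psi}-1\big)$ are $L^{2}$-orthogonal to the eigenspace, not that they vanish at every vertex. Your remark that ``a graph poses no regularity obstruction'' does not close this gap, because the issue is the restricted test class, not smoothness; unless the theorem is read as asking only for a critical point of the constrained problem (which is how the paper implicitly reads it), an additional argument is needed here, and the paper's own proof is silent on the same point.
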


\begin{remark}
 The existence results of the solutions to the Toda system $(\ref{tod5a1})$ on connected finite
 and locally finite graphs are quite different from that on compact Riemann surfaces.
The Toda system $(\ref{tod5a1})$ always has a minimum solution on connected finite
(or locally finite) graphs if $\varphi_{1},\varphi_{2}>0$, while on compact Riemann surface,
the Toda system $(\ref{tod5a1})$ does not have minimum solution when $\varphi_{1}>4\pi$ or $\varphi_{2}>4\pi$.
 We also get that the minimum solution to $(\ref{tod5a1})$ is $(u,v)\in \mathcal{H}$, where $u$ and $v$ are harmonic eigenfunctions of $-\Delta$ on $G$ with eigenvalue $\lambda_{1}$,
\end{remark}

\subsection{Local existence results on locally finite graphs}\label{section1}

Now, we consider some nonlinear elliptic systems on locally finite graphs.
 We extend the existence results for one single equation on graphs such as (\ref{1.3}) and (\ref{1.4}) to nonlinear elliptic systems on graphs. Also such problems can be viewed as one type of discrete version of the elliptic systems on Euclidean space and Riemannian manifold.

Let $G = (V,E)$ be a locally finite graph and $\Omega$ be a bounded domain of $V$ with $\Omega^{\circ}\neq\emptyset$.
The first system we consider is the following semilinear elliptic system
\begin{equation}\label{E1}
\left \{
\begin{array}{lcr}
 -\Delta u=\frac{p}{p+q}|u|^{p-2}u|v|^{q} &{\rm in}&\Omega^{\circ},\\
-\Delta v=\frac{q}{p+q}|u|^{p}|v|^{q-2}v  &{\rm in}&\Omega^{\circ},\\
u>0 ~and ~v>0 &{\rm in}&\Omega^{\circ},\\
u=v=0 &{\rm on}&\partial\Omega,
 \end{array}
\right.
\end{equation}
where $p, q>2$.

Let $\mathbb{H}= W^{1,2}_{0}(\Omega)\times W^{1,2}_{0}(\Omega)$ be the space with the norm
\begin{align*}
||(u,v)||_{\mathbb{H}}= \Big{(}\int_{\Omega}(|\nabla u|^{2}+|\nabla v|^{2})d\psi\Big{)}^{\frac{1}{2}}.
\end{align*}
Associated with the problem (\ref{E1}), we consider the $C^{1}$-functional $J_{3}(u,v):\mathbb{H}\rightarrow \mathbb{R}$ for $(u,v)\in\mathbb{H}$, where
$$ J_{3}(u,v)=\frac{1}{2}\int_{\Omega}(|\nabla u|^{2}+|\nabla v|^{2})d\psi-\frac{1}{p+q}\int_{\Omega}|u|^{p}|v|^{q}d\psi.$$
The weak solution $(u,v)\in\mathbb{H}$ to problem (\ref{E1}) is the critical point of the functional $J_{3}(u,v)$, that is,
$(u,v)\in\mathbb{H}$ satisfies
\begin{align*}
\int_{\Omega}&(\nabla u \nabla\varphi_{01}+\nabla v \nabla\varphi_{02})d\psi-\frac{p}{p+q}\int_{\Omega}|u|^{p-2}u|v|^{q}\varphi_{01}d\psi \\
&-\frac{q}{p+q}\int_{\Omega}|u|^{p}|v|^{q-2}v\varphi_{02}d\psi=0
\end{align*}
for any $(\varphi_{01},\varphi_{02})\in\mathbb{H}$.

\begin{theorem}\label{T1}
Let $G = (V,E)$ be a locally finite graph and $\Omega$ be a bounded domain of $V$ with $\Omega^{\circ}\neq\emptyset$.
Then for any $p,q > 2$, there exists a solution $(u,v)\in \mathbb{H}$ to the problem $(\ref{E1})$.
\end{theorem}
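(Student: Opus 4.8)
The plan is to obtain $(u,v)$ as a Mountain Pass critical point of a truncated version of $J_{3}$ and then upgrade it to a positive solution via the strong maximum principle. Since $\Omega$ is a bounded domain of the locally finite graph $G$, it contains only finitely many vertices, so $\mathbb{H}=W^{1,2}_{0}(\Omega)\times W^{1,2}_{0}(\Omega)$ is finite dimensional; in particular all norms on $\mathbb{H}$ are equivalent, the Sobolev embedding $W^{1,2}_{0}(\Omega)\hookrightarrow L^{\gamma}(\Omega)$ recalled in the introduction holds for every $\gamma\ge 1$ with a constant $C=C(\Omega,\gamma)$, and every $C^{1}$ functional on $\mathbb{H}$ with bounded Palais--Smale sequences satisfies the $(PS)$ condition. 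To force the components to be nonnegative, write $s^{\pm}=\max\{\pm s,0\}$, set $F(u,v)=\frac{1}{p+q}(u^{+})^{p}(v^{+})^{q}$ and consider the $C^{1}$ functional
$$\widetilde{J}_{3}(u,v)=\frac{1}{2}\int_{\Omega}\big(|\nabla u|^{2}+|\nabla v|^{2}\big)\,d\psi-\int_{\Omega}F(u,v)\,d\psi,\qquad (u,v)\in\mathbb{H},$$
which coincides with $J_{3}$ on the cone $\{u,v\ge 0\}$.

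Next I would verify the mountain pass geometry for $\widetilde{J}_{3}$. From $F(u,v)\le\frac{1}{p+q}|u|^{p}|v|^{q}$, H\"older's inequality and the Sobolev embedding give $\int_{\Omega}F(u,v)\,d\psi\le\frac{1}{p+q}\|u\|_{L^{p+q}}^{p}\|v\|_{L^{p+q}}^{q}\le C_{1}\|(u,v)\|_{\mathbb{H}}^{p+q}$, so $\widetilde{J}_{3}(u,v)\ge\frac12\|(u,v)\|_{\mathbb{H}}^{2}-C_{1}\|(u,v)\|_{\mathbb{H}}^{p+q}$; since $p+q>2$ there are $\rho,\alpha>0$ with $\widetilde{J}_{3}\ge\alpha$ on $\|(u,v)\|_{\mathbb{H}}=\rho$. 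Choosing any $\phi_{0}\in W^{1,2}_{0}(\Omega)$ with $\phi_{0}>0$ on $\Omega^{\circ}$ (possible since $\Omega^{\circ}\neq\emptyset$), one has $\widetilde{J}_{3}(t\phi_{0},t\phi_{0})=t^{2}\int_{\Omega}|\nabla\phi_{0}|^{2}d\psi-\frac{t^{p+q}}{p+q}\int_{\Omega}\phi_{0}^{p+q}d\psi\to-\infty$ as $t\to+\infty$, so for large $T$ the point $(u_{0},v_{0})=(T\phi_{0},T\phi_{0})$ lies outside $B_{\rho}$ with $\widetilde{J}_{3}(u_{0},v_{0})<0$. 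For compactness, if $(u_{n},v_{n})$ is a $(PS)_{c}$ sequence, the homogeneity of $F$ yields
$$\widetilde{J}_{3}(u_{n},v_{n})-\frac{1}{p+q}\widetilde{J}_{3}'(u_{n},v_{n})[(u_{n},v_{n})]=\Big(\frac12-\frac{1}{p+q}\Big)\|(u_{n},v_{n})\|_{\mathbb{H}}^{2},$$
so $\|(u_{n},v_{n})\|_{\mathbb{H}}$ is bounded and, by finite dimensionality, has a convergent subsequence. The Mountain Pass Theorem then produces $(u,v)\in\mathbb{H}$ with $\widetilde{J}_{3}'(u,v)=0$ and $\widetilde{J}_{3}(u,v)=c\ge\alpha>0$.

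The remaining — and main — point is positivity. Testing the equation $-\Delta u=\frac{p}{p+q}(u^{+})^{p-1}(v^{+})^{q}$ against $-u^{-}\in W^{1,2}_{0}(\Omega)$, the right-hand side vanishes because $(u^{+})^{p-1}u^{-}\equiv 0$, while the pointwise inequality $\Gamma(u^{+},u^{-})\le 0$ gives $\int_{\Omega}\nabla u\,\nabla(-u^{-})\,d\psi\ge\int_{\Omega}|\nabla u^{-}|^{2}\,d\psi$; hence $|\nabla u^{-}|\equiv 0$ on $\Omega$, and since $u^{-}=0$ on $\partial\Omega$ this forces $u^{-}\equiv 0$, i.e.\ $u\ge 0$, and the same argument gives $v\ge 0$. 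Then $F(u,v)=\frac{1}{p+q}u^{p}v^{q}$, so $(u,v)$ is a nonnegative critical point of $J_{3}$, hence a weak solution of the first two equations of (\ref{E1}) with $u=v=0$ on $\partial\Omega$. Finally $-\Delta u=\frac{p}{p+q}u^{p-1}v^{q}\ge 0$ in $\Omega^{\circ}$, so the strong maximum principle established earlier in the paper gives $u\equiv 0$ or $u>0$ on $\Omega^{\circ}$; if $u\equiv 0$ then $-\Delta v=0$ in $\Omega^{\circ}$ with $v=0$ on $\partial\Omega$ forces $v\equiv 0$, whence $c=J_{3}(u,v)=0$, contradicting $c>0$; thus $u>0$ on $\Omega^{\circ}$, and symmetrically $v>0$ on $\Omega^{\circ}$. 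The variational part is essentially automatic because $\mathbb{H}$ is finite dimensional; the delicate points are the choice of truncation guaranteeing nonnegativity of the mountain pass solution and the exclusion of the semi-trivial pairs through the strong maximum principle.
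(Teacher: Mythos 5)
Your proposal is correct and follows the same overall route as the paper's proof: mountain-pass geometry for the energy functional on $\mathbb{H}$, boundedness of Palais--Smale sequences from the identity $J-\tfrac{1}{p+q}J'[(u,v)]=(\tfrac12-\tfrac{1}{p+q})\|(u,v)\|^2_{\mathbb{H}}$ with $p+q>2$, compactness from the pre-compactness of $W^{1,2}_0(\Omega)$ (which, as you note, is just finite-dimensionality of $\mathbb{H}$), and finally the strong maximum principle (Lemma \ref{zhunze1}) to upgrade a nonnegative nontrivial critical point to a positive solution of (\ref{E1}). Where you genuinely diverge is the positivity step, and there your treatment is the more careful one. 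The paper works directly with the nonlinearity $|u|^p|v|^q$ and disposes of signs by asserting that $J_3(u,v)=J_3(|u|,|v|)$ so that one ``may assume'' $u\ge0$, $v\ge0$; on a graph one only has $|\nabla|u||\le|\nabla u|$, so that identity is not literal and $(|u|,|v|)$ is not obviously again a critical point. Your truncation $F(u,v)=\tfrac{1}{p+q}(u^+)^p(v^+)^q$ followed by testing the Euler--Lagrange equations against the negative parts is precisely the device the paper itself deploys later in the proofs of Theorems \ref{T3} and \ref{T4}, and it closes this gap cleanly. You also treat the semi-trivial alternative explicitly: the paper's concluding contradiction assumes that \emph{both} $u$ and $v$ vanish somewhere, whereas the negation of ``$u>0$ and $v>0$'' only yields that one of them does; your observation that $u\equiv0$ forces $-\Delta v=0$ with zero boundary data, hence $v\equiv0$ and $c=0$, contradicting $c\ge\alpha>0$, is exactly what is needed to finish that case. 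One cosmetic caveat: your inequalities $\Gamma(u^+,u^-)\le0$ and $\int_\Omega\nabla u\,\nabla(-u^-)\,d\psi\ge\int_\Omega|\nabla u^-|^2\,d\psi$ are correct under your convention $u^-=\max\{-u,0\}\ge0$ (so $u=u^+-u^-$), but the paper uses $u^-=\min\{u,0\}\le0$, under which the signs reverse; keep the convention fixed when writing this out.
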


Let $\mathbb{W}= W^{1,p}_{0}(\Omega)\times W^{1,q}_{0}(\Omega)$ be the space with the norm
\begin{align*}
||(u,v)||_{\mathbb{W}}=\max\Big{\{} ||u||_{W^{1,p}_{0}(\Omega)}, ||v||_{W^{1,q}_{0}(\Omega)}\Big{\}},
\end{align*}
and we consider the following system:
\begin{equation}\label{E2}
\left \{
\begin{array}{lcr}
 -\Delta_{p} u=\lambda_{0}|u|^{\alpha-1}u|v|^{\beta+1} &{\rm in}&\Omega^{\circ},\\
-\Delta_{q} v=\lambda_{0}|u|^{\alpha+1}|v|^{\beta-1}v &{\rm in}&\Omega^{\circ},\\
u>0 ~and ~v>0 &{\rm in}&\Omega^{\circ},\\
u=v=0 &{\rm on}&\partial\Omega.
 \end{array}
\right.
\end{equation}
\begin{theorem}\label{T2}
Let $G =(V,E)$ be a locally finite graph and $\Omega$ be a bounded domain of $V$ such that $\Omega^{\circ}\neq \emptyset$.
Then for any $\alpha+1>p$, $\beta+1>q$ and $\lambda_{0}>0$, there exists a solution $(u,v)\in \mathbb{W}$ to the problem $(\ref{E2})$.
\end{theorem}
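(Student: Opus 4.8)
The approach I would take is constrained minimization followed by a rescaling that absorbs the Lagrange multiplier; the structural assumptions $\alpha+1>p$ and $\beta+1>q$ are exactly what makes the rescaling possible. Since $G=(V,E)$ is locally finite and $\Omega$ is a bounded domain, $\Omega$ consists of finitely many vertices, so $\mathbb{W}=W^{1,p}_0(\Omega)\times W^{1,q}_0(\Omega)$ is finite-dimensional, all its norms are equivalent (in accordance with the embedding $W^{1,s}_0(\Omega)\hookrightarrow L^\gamma(\Omega)$ for $1\le\gamma\le+\infty$ recalled in the introduction), and $G(u,v):=\int_\Omega|u|^{\alpha+1}|v|^{\beta+1}\,d\psi$ defines a continuous functional on $\mathbb{W}$. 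I would then study
\[
S=\inf\Big\{\tfrac1p\int_\Omega|\nabla u|^p\,d\psi+\tfrac1q\int_\Omega|\nabla v|^q\,d\psi\ :\ (u,v)\in\mathbb{W},\ G(u,v)=1\Big\}.
\]
The constraint set $M=\{G=1\}$ is non-empty (take functions positive on $\Omega^\circ$ and vanishing on $\partial\Omega$, then normalize), and on $M$ one has $1=G(u,v)\le C\,\|u\|_{W^{1,p}_0(\Omega)}^{\alpha+1}\|v\|_{W^{1,q}_0(\Omega)}^{\beta+1}$ by equivalence of norms, so $\|u\|$ and $\|v\|$ cannot both be small; hence $S>0$. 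A minimizing sequence is bounded in the finite-dimensional space $\mathbb{W}$, so a subsequence converges to some $(u_0,v_0)\in M$ with energy $S$. Since $|\nabla|w||\le|\nabla w|$ pointwise on graphs and $G$ only involves $|u|,|v|$, the pair $(|u_0|,|v_0|)$ is again a minimizer, so I may assume $u_0\ge0$, $v_0\ge0$; and $u_0\not\equiv0$, $v_0\not\equiv0$ because $G(u_0,v_0)=1$.

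The Lagrange multiplier rule (note $\langle G'(u_0,v_0),(u_0,v_0)\rangle=\alpha+\beta+2\ne0$, so $M$ is a manifold near $(u_0,v_0)$) produces $\theta\in\mathbb{R}$ with $E'(u_0,v_0)=\theta\,G'(u_0,v_0)$; expanding this with the weak formulation (\ref{pol}) of $\Delta_p,\Delta_q$ gives
\[
-\Delta_p u_0=\theta(\alpha+1)\,u_0^{\alpha}v_0^{\beta+1},\qquad -\Delta_q v_0=\theta(\beta+1)\,u_0^{\alpha+1}v_0^{\beta}\quad\text{in }\Omega^\circ,
\]
with $u_0=v_0=0$ on $\partial\Omega$. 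Testing with $u_0$ and $v_0$ and using $\int_\Omega(-\Delta_p u_0)u_0\,d\psi=\int_\Omega|\nabla u_0|^p\,d\psi=\|u_0\|_{W^{1,p}_0(\Omega)}^p>0$ together with $G(u_0,v_0)=1$ yields $\theta(\alpha+1)=\|u_0\|_{W^{1,p}_0(\Omega)}^p>0$, hence $\theta>0$. Now I would look for a solution of $(\ref{E2})$ of the form $(\tilde u,\tilde v)=(t\,u_0,s\,v_0)$ with $t,s>0$. Using $-\Delta_p(tu_0)=t^{p-1}(-\Delta_p u_0)$ and the homogeneity of the right-hand sides, $(\tilde u,\tilde v)$ solves the two equations of $(\ref{E2})$ if and only if $(\log t,\log s)$ solves
\[
(\alpha+1-p)\log t+(\beta+1)\log s=\log\tfrac{\theta(\alpha+1)}{\lambda_0},\qquad (\alpha+1)\log t+(\beta+1-q)\log s=\log\tfrac{\theta(\beta+1)}{\lambda_0}.
\]
The determinant of this $2\times2$ system is $pq-(\alpha+1)q-(\beta+1)p$, which is strictly negative because $\alpha+1>p>1$ and $\beta+1>q>1$; this is precisely where the hypotheses are used. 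So the system has a unique solution, giving a unique pair $t,s>0$, and $(\tilde u,\tilde v)\in\mathbb{W}$ solves the PDE part of $(\ref{E2})$.

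It remains to get strict positivity. By construction $\tilde u\ge0$, $\tilde u\not\equiv0$, $\tilde u=0$ on $\partial\Omega$, and $-\Delta_p\tilde u=\lambda_0\,\tilde u^{\alpha}\tilde v^{\beta+1}\ge0$ in $\Omega^\circ$; the strong maximum principle established earlier in the paper then forces $\tilde u>0$ in $\Omega^\circ$, and the same reasoning applied to $\tilde v$ (using $-\Delta_q\tilde v=\lambda_0\,\tilde u^{\alpha+1}\tilde v^{\beta}\ge0$) gives $\tilde v>0$ in $\Omega^\circ$. Thus $(\tilde u,\tilde v)$ is a solution of $(\ref{E2})$, which proves the theorem. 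I expect the only genuinely delicate step to be the rescaling: one must show that the two effective --- and in general unequal --- coefficients $\theta(\alpha+1)$ and $\theta(\beta+1)$ of the Euler--Lagrange system can be simultaneously normalized to $\lambda_0$, and the non-degeneracy of the linear system that governs this is exactly what the growth conditions $\alpha+1>p$, $\beta+1>q$ guarantee, playing here the role that $p>2$ plays for $(\ref{1.3})$. Everything else is routine once one observes that $\Omega$, hence $\mathbb{W}$, is finite-dimensional.
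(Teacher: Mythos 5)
Your proposal is correct, but it takes a genuinely different route from the paper. The paper proves Theorem \ref{T2} by applying the mountain pass theorem (Lemma \ref{mountain}) directly to the unconstrained functional
\[
J_{4}(u,v)=\frac{\alpha+1}{p}\int_{\Omega}|\nabla u|^{p}d\psi+\frac{\beta+1}{q}\int_{\Omega}|\nabla v|^{q}d\psi-\lambda_{0}\int_{\Omega}|u|^{\alpha+1}|v|^{\beta+1}d\psi,
\]
verifying the $(PS)_{c}$ condition and the two geometric inequalities via the Young inequality and the embedding of Lemma \ref{qianru}, and then deriving positivity from Lemma \ref{zhunze1} exactly as you do. Your scheme --- minimize the gradient energy on the constraint set $\{G=1\}$, apply the Lagrange multiplier rule, and then rescale $(u_0,v_0)\mapsto(tu_0,sv_0)$ to absorb the multiplier --- is a valid alternative: the multiplier is positive because testing with $u_0$ gives $\theta(\alpha+1)=\|u_0\|_{W^{1,p}_0(\Omega)}^{p}>0$, and the determinant $pq-(\alpha+1)q-(\beta+1)p$ of your log-linear system is indeed strictly negative under $\alpha+1>p$ and $\beta+1>q$, so the normalization to the common constant $\lambda_0$ succeeds. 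What your approach buys is elementarity (no Palais--Smale analysis; compactness is immediate from finite-dimensionality, equivalently from the pre-compactness in Lemma \ref{qianru}) and a cleaner handling of signs: for a constrained minimizer the one-sided inequality $E(|u_0|,|v_0|)\le E(u_0,v_0)$ already shows $(|u_0|,|v_0|)$ is a minimizer, whereas the paper's assertion $J_{4}(u,v)=J_{4}(|u|,|v|)$ is, on graphs, really only the inequality $J_{4}(|u|,|v|)\le J_{4}(u,v)$, and passing nonnegativity to a mountain-pass critical point takes more care. What the paper's approach buys is robustness: it does not rely on the exact joint homogeneity of $|u|^{\alpha+1}|v|^{\beta+1}$ and of $\Delta_p,\Delta_q$, which your rescaling step requires, and it is the template reused for Theorems \ref{T13}, \ref{T3}, \ref{T4} and \ref{T23}, where no such rescaling is available. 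Both arguments conclude with the same strong maximum principle step.
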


Now, we consider the following equation system
\begin{equation}\label{E13}
\left \{
\begin{array}{lcr}
 \mathcal{L}_{m,p} u=\lambda \omega(x)|u|^{p-2}u+\frac{\alpha+1}{\alpha+\beta+2}|u|^{\alpha-1}u|v|^{\beta+1} &{\rm in}&\Omega^{\circ},\\
\mathcal{L}_{n,q} v=\vartheta\psi(x)|v|^{q-2}v+\frac{\beta+1}{\alpha+\beta+2}|u|^{\alpha+1}|v|^{\beta-1}v &{\rm in}& \Omega^{\circ},\\
| \nabla^{i} u|=0, ~~0\leq i\leq m-1, &{\rm on}& \partial\Omega,\\
| \nabla^{j} v|=0 ,~~0\leq j\leq n-1, &{\rm on}& \partial\Omega,
 \end{array}
\right.
\end{equation}
where $\alpha+1>p>1$, $\beta+1>q>1$, $m,n\geq 2$, $\omega(x)$ and $\psi(x)$ are given functions
which may change sign, and $\lambda$ and $\vartheta$ are the eigenvalue parameters.

Let $\mathcal{Z}=W^{m,p}_{0}(\Omega)\times W^{n,q}_{0}(\Omega)$ be the space with the norm
\begin{align*}
||(u,v)||_{\mathcal{Z}}=\max\Big{\{} ||u||_{W^{m,p}_{0}(\Omega)}, ||v||_{W^{n,q}_{0}(\Omega)}\Big{\}}.
\end{align*}
We define
\begin{align}
\lambda_{1}=\inf_{(u,v)\in \mathcal{Z}\backslash \{(0,0)\}}\frac{\int_{\Omega}|\nabla^{m} u|^{p}d\psi}{\int_{\Omega}\omega(x)|u|^{p}d\psi},\label{canshu111}\\
\vartheta_{1}=\inf_{(u,v)\in \mathcal{Z}\backslash \{(0,0)\}}\frac{\int_{\Omega}|\nabla^{n} v|^{q}d\psi}{\int_{\Omega}\psi(x)|v|^{q}d\psi}.\label{canshu122}
\end{align}
From \cite{Cuesta}, it is not difficult to get that $\lambda_{1},\vartheta_{1}>0$.

 Our next result reads
\begin{theorem}\label{T13}
Let $G =(V,E)$ be a locally finite graph and $\Omega \subset V$ be a bounded domain with $\Omega^{\circ}\neq \emptyset$.
Then for any $\lambda\in (0,\lambda_{1})$ and $\vartheta\in (0,\vartheta_{1})$, there exists a solution $(u,v)\in \mathcal{Z}\backslash \{(0,0)\}$ to the problem $(\ref{E13})$.
\end{theorem}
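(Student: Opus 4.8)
The plan is to prove Theorem \ref{T13} by the variational method, applying the Mountain Pass Theorem to the energy functional $J_{5}:\mathcal{Z}\to\mathbb{R}$ defined by
\begin{align*}
J_{5}(u,v)=&\frac{1}{p}\int_{\Omega}|\nabla^{m}u|^{p}d\psi+\frac{1}{q}\int_{\Omega}|\nabla^{n}v|^{q}d\psi \\
&-\frac{\lambda}{p}\int_{\Omega}\omega(x)|u|^{p}d\psi-\frac{\vartheta}{q}\int_{\Omega}\psi(x)|v|^{q}d\psi-\frac{1}{\alpha+\beta+2}\int_{\Omega}|u|^{\alpha+1}|v|^{\beta+1}d\psi.
\end{align*}
By the definition (\ref{pol}) of $\mathcal{L}_{m,p}$ and $\mathcal{L}_{n,q}$ one verifies that $J_{5}\in C^{1}(\mathcal{Z},\mathbb{R})$ and that $(u,v)$ is a weak solution of (\ref{E13}) precisely when it is a critical point of $J_{5}$; moreover, since $\Omega$ is a bounded domain of a locally finite graph it has only finitely many vertices, so $\mathcal{Z}$ is finite-dimensional and all of its norms are equivalent. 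The first ingredient is the coercivity estimate supplied by (\ref{canshu111})--(\ref{canshu122}): for $\lambda\in(0,\lambda_{1})$ and every $u\in W^{m,p}_{0}(\Omega)$ one has $\int_{\Omega}|\nabla^{m}u|^{p}d\psi-\lambda\int_{\Omega}\omega(x)|u|^{p}d\psi\geq(1-\lambda/\lambda_{1})\|u\|^{p}_{W^{m,p}_{0}(\Omega)}$, argued separately according to the sign of $\int_{\Omega}\omega(x)|u|^{p}d\psi$, and likewise for $v$ with $\vartheta\in(0,\vartheta_{1})$.

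Next I would verify the mountain-pass geometry of $J_{5}$. Equivalence of norms on the finite set $\Omega$ gives $\int_{\Omega}|u|^{\alpha+1}|v|^{\beta+1}d\psi\leq C\|u\|^{\alpha+1}_{W^{m,p}_{0}(\Omega)}\|v\|^{\beta+1}_{W^{n,q}_{0}(\Omega)}$; combining this with the coercivity estimate, for $\|(u,v)\|_{\mathcal{Z}}=r$ small one obtains $J_{5}(u,v)\geq c_{1}\min\{r^{p},r^{q}\}-Cr^{\alpha+\beta+2}\geq\delta>0$, using $\alpha+\beta+2>p+q>\max\{p,q\}$; here one splits according to whether $\|u\|_{W^{m,p}_{0}(\Omega)}$ or $\|v\|_{W^{n,q}_{0}(\Omega)}$ realizes $\|(u,v)\|_{\mathcal{Z}}$. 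For the far point, fix any $\phi\not\equiv 0$ supported in $\Omega^{\circ}$; since the coupling term carries exponent $\alpha+\beta+2>\max\{p,q\}$ and $\int_{\Omega}|\phi|^{\alpha+\beta+2}d\psi>0$, we get $J_{5}(t\phi,t\phi)\to-\infty$ as $t\to+\infty$, so there exists $e=(t_{0}\phi,t_{0}\phi)$ with $\|e\|_{\mathcal{Z}}>r$ and $J_{5}(e)<0$.

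For the Palais--Smale condition, take a sequence $(u_{k},v_{k})$ with $J_{5}(u_{k},v_{k})\to c$ and $J_{5}'(u_{k},v_{k})\to 0$. Forming $J_{5}(u_{k},v_{k})-\frac{1}{\alpha+\beta+2}\langle J_{5}'(u_{k},v_{k}),(u_{k},v_{k})\rangle$, the two coupling terms cancel identically (this is why the precise constants in (\ref{E13}) are chosen as they are), and using $\alpha+\beta+2>p$, $\alpha+\beta+2>q$ together with the coercivity estimate, this expression is bounded below by $C\big(\|u_{k}\|^{p}_{W^{m,p}_{0}(\Omega)}+\|v_{k}\|^{q}_{W^{n,q}_{0}(\Omega)}\big)$, while it is bounded above by $c+1+\epsilon_{k}\|(u_{k},v_{k})\|_{\mathcal{Z}}$ for large $k$. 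Since $p,q>1$, this forces $(u_{k},v_{k})$ to be bounded in $\mathcal{Z}$, and finite-dimensionality then yields a convergent subsequence, so $(PS)_{c}$ holds. The Mountain Pass Theorem now gives a critical point $(u,v)$ of $J_{5}$ at level $c\geq\delta>0=J_{5}(0,0)$; in particular $(u,v)\in\mathcal{Z}\setminus\{(0,0)\}$, and it is the desired solution of (\ref{E13}).

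The step I expect to be the main obstacle is the bookkeeping forced by the fact that $\mathcal{Z}$ carries the max-norm while the principal parts of $J_{5}$ are $p$-homogeneous in $u$ and $q$-homogeneous in $v$ \emph{separately}: both the sublevel estimate near the origin and the Palais--Smale boundedness argument must be organized according to which component realizes the maximum, and the sign-changing weights $\omega,\psi$ must be absorbed carefully in the coercivity step, which is exactly where the hypotheses $\lambda<\lambda_{1}$ and $\vartheta<\vartheta_{1}$ enter. Once boundedness is in hand, the compactness required for $(PS)_{c}$ is automatic because $\Omega$ is finite.
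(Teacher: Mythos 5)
Your proposal is correct and follows essentially the same route as the paper: the same functional $J_{5}$, the same coercivity estimate from $\lambda<\lambda_{1}$, $\vartheta<\vartheta_{1}$, the Palais--Smale boundedness via $J_{5}-\frac{1}{\alpha+\beta+2}\langle J_{5}',(u_{k},v_{k})\rangle$ (which is exactly the paper's combination of its equations (c.5) and (c.6)), and the mountain-pass geometry concluded by Lemma \ref{mountain}. The only cosmetic differences are that the paper bounds the coupling term by Young's inequality with exponents $\gamma_{1},\gamma_{2}$ plus the Sobolev embedding of Lemma \ref{qianru} rather than by finite-dimensional norm equivalence, and your explicit choice of the far point $(t\phi,t\phi)$ is actually slightly more careful than the paper's, since it guarantees the coupling integral is strictly positive.
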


\subsection{Global existence results on finite graphs}

Now, analogous to Theorem \ref{T1}-\ref{T13}, we consider the global existence results for some elliptic systems on finite graphs $G=(V,E)$.
We extend the existence results for one single equation on graphs such as (\ref{1.5}) and (\ref{1.6}) to nonlinear elliptic systems on graphs.

Let $\mathbb{L}= W^{1,2}(V)\times W^{1,2}(V)$ be the space with the norm
\begin{align*}
||(u,v)||_{\mathbb{L}}=\Big{(}\int_{V}\big{(}|\nabla u|^{2}+|\nabla v|^{2}+h(x)|u|^{2}+h(x)|v|^{2}\big{)}d\psi\Big{)}^{\frac{1}{2}}.
\end{align*}
We have the following Theorem \ref{T3} analogous to Theorem \ref{T1}.
\begin{theorem}\label{T3}
Let $G =(V,E)$ be a finite graph and $h(x)>0$ for all $x\in V$.
 Suppose that $f(x,u): V\times \mathbb{R}\rightarrow \mathbb{R}$ and $g(x,v): V\times \mathbb{R}\rightarrow \mathbb{R}$ satisfy the following hypotheses:\\
\noindent$(H1)$ For all $x\in V$, $f(x,t)$ and $g(x,t)$ are
 continuous in $t\in \mathbb{R}$, $f(x,0)=g(x,0)=0$, \\
\indent \indent and $f(x,t),g(x,t)\geq 0$ for all $(x,t)\in V\times [0,+\infty)$;\\
\noindent$(H2)$ There exists a positive number $s>1$, such that $f(x,u)=o(|u|^{s})$ and \\
\indent \indent $g(x,v)=o(|v|^{s})$ uniformly for $x\in V$ as $|u|,|v|\rightarrow \infty$;\\
\noindent$(H3)$ $f(x,u)=o(|u|)$ and $g(x,v)=o(|v|)$ uniformly for $x\in V$ as $|u|,|v|\rightarrow 0$;\\
\noindent$(H4)$ There exists $\theta>2$ and $s_{0}>0$ such that if $s>s_{0}$, there hold\\
\indent \indent $\theta F(x,s)<f(x,s)s$ and $\theta G(x,s)<g(x,s)s$ for any $x\in V$,\\
\indent \indent  where $F(x,s)=\int_{0}^{s}f(x,t)dt$ and $G(x,s)=\int_{0}^{s}g(x,t)dt$.\\
Then, there exists a solution $(u,v)\in \mathbb{L}$ to the following problem
\begin{equation}\label{E3}
\left \{
\begin{array}{lcr}
 -\Delta u+h(x)u=g(x,v) &{\rm in}& V,\\
-\Delta v+h(x)v=f(x,u) &{\rm in}& V,\\
u>0 ~and ~v>0 &{\rm in}& V.
 \end{array}
\right.
\end{equation}
\end{theorem}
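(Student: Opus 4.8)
The plan is to solve a sign-truncated version of $(\ref{E3})$ by a linking argument in the (finite dimensional) Hilbert space $\mathbb{L}$ and then recover a solution of $(\ref{E3})$ itself via the strong maximum principle. Since $G$ is finite, all norms on $\mathbb{L}=W^{1,2}(V)\times W^{1,2}(V)$ are equivalent and the embedding $\mathbb{L}\hookrightarrow L^{\gamma}(V)\times L^{\gamma}(V)$ is (compactly) continuous for every $\gamma\ge 1$. I would first set $\bar f(x,t)=f(x,t)$, $\bar g(x,t)=g(x,t)$ for $t\ge 0$ and $\bar f(x,t)=\bar g(x,t)=0$ for $t<0$; by (H1) these remain continuous on $V\times\mathbb{R}$, still satisfy (H2)--(H4) with the same $s,\theta$, and their primitives $\bar F(x,s)=\int_{0}^{s}\bar f(x,t)\,dt$, $\bar G(x,s)=\int_{0}^{s}\bar g(x,t)\,dt$ are nonnegative everywhere. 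On $\mathbb{L}$ I would study the $C^{1}$ functional
\begin{equation*}
I(u,v)=\int_{V}\big(\Gamma(u,v)+h(x)uv\big)\,d\psi-\int_{V}\bar F(x,u)\,d\psi-\int_{V}\bar G(x,v)\,d\psi ,
\end{equation*}
whose critical points, by summation by parts ($\int_{V}\Gamma(u,\phi)\,d\psi=\int_{V}(-\Delta u)\phi\,d\psi$), are exactly the solutions of the truncated system $-\Delta u+h(x)u=\bar g(x,v)$, $-\Delta v+h(x)v=\bar f(x,u)$. The quadratic part of $I$ is strongly indefinite, so I would use Rabinowitz's generalized mountain pass (linking) theorem with the splitting $\mathbb{L}=\mathbb{L}^{+}\oplus\mathbb{L}^{-}$, $\mathbb{L}^{\pm}=\{(w,\pm w):w\in W^{1,2}(V)\}$: these subspaces are orthogonal for both the $\mathbb{L}$-inner product and the quadratic form of $I$, on $\mathbb{L}^{+}$ the form equals $\|w\|_{W^{1,2}(V)}^{2}>0$, and on $\mathbb{L}^{-}$ it equals $-\|w\|_{W^{1,2}(V)}^{2}<0$.

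Checking the linking geometry is routine. On a small sphere $\partial B_{\rho}\cap\mathbb{L}^{+}$ each component of $(w,w)$ is small, so (H3) gives $\bar F(x,w)+\bar G(x,w)=o(\|w\|_{W^{1,2}(V)}^{2})$ and hence $I\ge\tfrac12\|w\|_{W^{1,2}(V)}^{2}\ge\alpha>0$ there for suitable $\rho,\alpha$. For the complementary geometry, fix $e=(e_{0},e_{0})\in\mathbb{L}^{+}$ with $e_{0}>0$ (e.g. $e_{0}$ constant), normalized to $\|e\|_{\mathbb{L}}=1$, and work on the finite dimensional subspace $\mathbb{L}^{-}\oplus\mathbb{R}_{\ge 0}e$. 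Writing a point there with $u=w+te_{0}$, $v=-w+te_{0}$, $t\ge 0$, one has $\max\{u(x),v(x)\}=te_{0}+|w(x)|\ge 0$; hence, using the super-$\theta$-growth $\bar F(x,s),\bar G(x,s)\ge c_{1}s^{\theta}-c_{2}$ ($\theta>2$) coming from (H4), $\int_{V}\bar F(x,u)\,d\psi+\int_{V}\bar G(x,v)\,d\psi\gtrsim\|z\|^{\theta}-C$, while the positive part of the quadratic form contributes only $O(\|z\|^{2})$. So $I\to-\infty$ on that subspace, the face $\{t=0\}$ is handled by $\bar F,\bar G\ge 0$ plus negativity of the form on $\mathbb{L}^{-}$, and for $R$ large $I\le 0$ on the relative boundary $\partial Q$ of $Q=\{z^{-}+te:\ z^{-}\in\mathbb{L}^{-},\ \|z^{-}\|\le R,\ 0\le t\le R\}$, with $\partial Q$ linking $\partial B_{\rho}\cap\mathbb{L}^{+}$.

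The step I expect to be the main obstacle is the Palais--Smale condition for the indefinite $I$. Given $(u_{k},v_{k})$ with $I(u_{k},v_{k})\to c$ and $I'(u_{k},v_{k})\to 0$, I would combine $I(u_{k},v_{k})-\tfrac12\langle I'(u_{k},v_{k}),(u_{k},v_{k})\rangle\to c$ with (H4) to deduce that $\int_{V}\bar F(x,u_{k})\,d\psi$ and $\int_{V}\bar G(x,v_{k})\,d\psi$ are $\le C(1+\|(u_{k},v_{k})\|)$; the super-$\theta$-growth of $\bar F,\bar G$ then bounds $\|u_{k}^{+}\|$ and $\|v_{k}^{+}\|$, and feeding this into $-\Delta u_{k}+h u_{k}=\bar g(x,v_{k}^{+})+o(1)$, $-\Delta v_{k}+hv_{k}=\bar f(x,u_{k}^{+})+o(1)$, using boundedness of $(-\Delta+h)^{-1}$ and the polynomial growth bound (H2) for $\bar f,\bar g$, bounds $\|(u_{k},v_{k})\|$ itself; finite dimensionality then yields a convergent subsequence whose limit, by continuity of $I'$, is a critical point. (The truncation is what makes the Ambrosetti--Rabinowitz estimate and the control $\bar g(x,v_{k})=\bar g(x,v_{k}^{+})$ go through.) The linking theorem then produces a critical point $(u^{*},v^{*})$ of $I$ with $I(u^{*},v^{*})\ge\alpha>0=I(0,0)$, so $(u^{*},v^{*})\neq(0,0)$ and it solves the truncated system.

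Finally, since $-\Delta u^{*}+h u^{*}=\bar g(x,v^{*})\ge 0$ and $-\Delta v^{*}+h v^{*}=\bar f(x,u^{*})\ge 0$, the strong maximum principle established in Section 4 gives $u^{*},v^{*}\ge 0$; if either were identically zero then, using $h>0$ and (H1), so would the other, contradicting $(u^{*},v^{*})\neq(0,0)$, and then the strong maximum principle (on the connected graph) upgrades this to $u^{*}>0$, $v^{*}>0$ on $V$. On that set $\bar f(x,u^{*})=f(x,u^{*})$ and $\bar g(x,v^{*})=g(x,v^{*})$, so $(u^{*},v^{*})\in\mathbb{L}$ is the asserted solution of $(\ref{E3})$. (Alternatively, since $f,g\ge 0$, one could argue entirely in the positive cone via a Krasnoselskii fixed point argument for the map $(u,v)\mapsto\big((-\Delta+h)^{-1}g(x,v),(-\Delta+h)^{-1}f(x,u)\big)$, avoiding the indefinite functional; but the linking route stays closest to the variational framework used elsewhere in the paper.) To summarize, the two delicate points are the strongly indefinite linking geometry on the right finite dimensional subspace and, above all, the Palais--Smale boundedness, where (H2) and (H4) are used together.
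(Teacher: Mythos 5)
Your route is genuinely different from the paper's. The paper's proof works with the positive-definite, \emph{decoupled} functional $J_{6}(u,v)=\tfrac12\|(u,v)\|_{\mathbb{L}}^{2}-\int_{V}F(x,u^{+})\,d\psi-\int_{V}G(x,v^{+})\,d\psi$ and applies the ordinary mountain pass theorem (Lemma \ref{mountain}); you take the strongly indefinite Hamiltonian functional $I(u,v)=\int_{V}(\Gamma(u,v)+huv)\,d\psi-\int_{V}\bar F(x,u)\,d\psi-\int_{V}\bar G(x,v)\,d\psi$ and a linking argument on the splitting $\mathbb{L}^{\pm}=\{(w,\pm w)\}$. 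Your choice is the one whose Euler--Lagrange equations actually are the coupled system $(\ref{E3})$: differentiating $J_{6}$ pairs $f(x,u^{+})$ with the test function for $u$ and $g(x,v^{+})$ with the test function for $v$, so its critical points solve the decoupled equations $-\Delta u+hu=f(x,u^{+})$, $-\Delta v+hv=g(x,v^{+})$ rather than the system in the statement, whereas your $I$ produces exactly $-\Delta u+hu=\bar g(x,v)$, $-\Delta v+hv=\bar f(x,u)$. What the indefinite formulation buys is fidelity to the Hamiltonian coupling; what it costs is exactly the point you flag, the Palais--Smale boundedness, since $2B(u,v)-\langle\Phi'(u,v),(u,v)\rangle$ no longer dominates $\|(u,v)\|_{\mathbb{L}}^{2}$.

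And there your sketch does not quite close as written. The Ambrosetti--Rabinowitz step gives only $\int_{V}\bar F(x,u_{k})\,d\psi+\int_{V}\bar G(x,v_{k})\,d\psi\le C(1+\|(u_{k},v_{k})\|_{\mathbb{L}})$, hence $\|u_{k}^{+}\|_{L^{\theta}}^{\theta}\le C(1+\|(u_{k},v_{k})\|_{\mathbb{L}})$ --- not an a priori bound on $\|u_{k}^{+}\|$. Feeding this through $(-\Delta+h)^{-1}$ with the growth bound $(H2)$ yields $\|(u_{k},v_{k})\|_{\mathbb{L}}\le C(1+\|(u_{k},v_{k})\|_{\mathbb{L}})^{s/\theta}+C$, which closes only if $s<\theta$, a relation between the $s$ of $(H2)$ and the $\theta$ of $(H4)$ that the hypotheses do not provide. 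The repair is cheap on a finite graph and bypasses $(H2)$ entirely: testing $I'(u_{k},v_{k})$ against $(0,u_{k}^{-})$ and $(v_{k}^{-},0)$, and using $\bar f,\bar g\ge0$ together with $\Gamma(u^{+},u^{-})\ge0$, gives $\|u_{k}^{-}\|,\|v_{k}^{-}\|=o(1)$; since all norms on the finite-dimensional space $W^{1,2}(V)$ are equivalent, $\|(u_{k},v_{k})\|_{\mathbb{L}}\le C\big(1+\|u_{k}^{+}\|_{L^{\theta}}+\|v_{k}^{+}\|_{L^{\theta}}\big)$, and the resulting inequality $X_{k}^{\theta}\le C(1+X_{k})$ with $\theta>2$ forces boundedness. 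With that substitution the rest of your argument (the linking geometry, the critical value $\ge\alpha>0$ ruling out the trivial solution, nonnegativity, and strict positivity) goes through; note only that nonnegativity of $u^{*},v^{*}$ comes from the $u^{-}$ test-function computation (as in the paper's own final step), not from Lemma \ref{zhunze1}, which presupposes $u,v\ge0$ and, like the paper, requires $G$ connected.
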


\begin{remark}
 $(u,v)$ is called a weak solution to $(\ref{E3})$, if $(u,v)\in \mathbb{L}$ satisfies
$$\int_{V}(\nabla u \nabla \phi+h(x)u\phi)d\psi+\int_{V}(\nabla v\nabla\nu+h(x)u\nu)d\psi-\int_{V}g(x,v)\phi d\psi-\int_{V}f(x,u)\nu d\psi=0$$
for all $(\phi,\nu)\in \mathbb{L}$.
\end{remark}

Let $\mathbb{X}= W^{1,p}(V)\times W^{1,q}(V)$ be the space with the norm
\begin{align*}
||(u,v)||_{\mathbb{X}}=\max\Big{\{} ||u||_{W^{1,p}(V)}, ||v||_{W^{1,q}(V)}\Big{\}}.
\end{align*}
Analogous to Theorem \ref{T2}, we have
\begin{theorem}\label{T4}
Let $G =(V,E)$ be a finite graph and $h(x)>0$ for all $x\in V$.
 Suppose that $f(x,u,v)$ and $g(x,u,v)$ satisfy the following hypotheses:\\
\noindent$(H1)$ There exists $F(x,u,v): V\times \mathbb{R}\times\mathbb{R}\rightarrow \mathbb{R}$,
 where $F_{u}(x,u,v)=f(x,u,v)$, \\
 \indent \indent  $F_{v}(x,u,v)=g(x,u,v)$, $F(x,0,0)=f(x,0,0)=g(x,0,0)=0$ for all\\
 \indent \indent   $x\in V$; $f(x,u,v)\geq 0$ and $g(x,u,v)\geq 0 $ for all $u,v\geq 0$. \\
\noindent$(H2)$ There exist $r>p,s>q$ and $C>0$ such that $F(x,u,v)\leq C(1+|u|^{r}+|v|^{s})$ \\
  \indent \indent for all $x\in V$;\\
\noindent$(H3)$ There exist $\bar{r}>p,\bar{s}>q,C_{0}>0$ and $\varepsilon>0$ such that $$F(x,u,v)\leq C_{0}(1+|u|^{\bar{r}}+|v|^{\bar{s}})$$
  \indent \indent  for all $|u|,|v| \leq \varepsilon$; \\
\noindent$(H4)$  There exist $R>0,0<\theta_{1}<\frac{1}{p}$ and $0<\theta_{2}<\frac{1}{q}$ such that
  $$0<F(x,u,v)\leq \theta_{1}uf(x,u,v)+\theta_{2}vg(x,u,v)$$
 \indent  \indent  for all $|u|,|v| \geq R$.\\
Then for any $p,q\geq 2$, there exists a solution $(u,v)\in \mathbb{X}$ to the following problem
\begin{equation*}\label{E4}
\left \{
\begin{array}{lcr}
 -\Delta_{p} u+h(x)|u|^{p-2}u=f(x,u,v)  &{\rm in}& V,\\
-\Delta_{q} v+h(x)|v|^{q-2}v=g(x,u,v)  &{\rm in}& V,\\
u>0 ~and ~v>0 &{\rm in}& V.
 \end{array}
\right.
\end{equation*}
\end{theorem}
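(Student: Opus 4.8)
The plan is to obtain $(u,v)$ as a mountain--pass critical point of the energy functional attached to a suitably truncated version of the system and then to upgrade the resulting nonnegative critical point to a strictly positive solution by a maximum--principle argument on the connected finite graph $G$. Since $(H1)$ only controls the signs of $f,g$ on the positive cone, I would first replace $f,g$ by the truncations $\widehat{f}(x,u,v)=f(x,u^{+},v^{+})$ and $\widehat{g}(x,u,v)=g(x,u^{+},v^{+})$, where $u^{+}=\max\{u,0\}$, and set $\widehat{F}(x,u,v)=F(x,u^{+},v^{+})$; by $(H1)$ this is a nonnegative $C^{1}$ potential with $\widehat{F}_{u}=\widehat{f}$, $\widehat{F}_{v}=\widehat{g}$, inheriting the growth bounds $(H2)$--$(H3)$ and the Ambrosetti--Rabinowitz inequality $(H4)$ on $\{u,v\geq 0\}$. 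Define $J:\mathbb{X}\rightarrow\mathbb{R}$ by
\begin{align*}
J(u,v)=\frac{1}{p}\|u\|_{W^{1,p}(V)}^{p}+\frac{1}{q}\|v\|_{W^{1,q}(V)}^{q}-\int_{V}\widehat{F}(x,u,v)\,d\psi .
\end{align*}
As $G$ is finite, $\mathbb{X}$ is finite dimensional and $W^{1,p}(V)\hookrightarrow L^{\gamma}(V)$ for every $\gamma\geq 1$; together with $(H2)$ this shows $J$ is well defined and of class $C^{1}$, and its critical points are exactly the weak solutions of $-\Delta_{p}u+h(x)|u|^{p-2}u=\widehat{f}(x,u,v)$, $-\Delta_{q}v+h(x)|v|^{q-2}v=\widehat{g}(x,u,v)$.

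Next I would check that $J$ has the mountain--pass geometry and satisfies the Palais--Smale condition. One has $J(0,0)=0$, and on the sphere $\|(u,v)\|_{\mathbb{X}}=\rho$ with $\rho$ small, where $|u|,|v|\leq\varepsilon$ on all of $V$, $(H3)$ gives $\int_{V}\widehat{F}\,d\psi\leq C\big(\|u\|_{W^{1,p}(V)}^{\bar{r}}+\|v\|_{W^{1,q}(V)}^{\bar{s}}\big)$, so $J(u,v)\geq\big(\tfrac1p\|u\|_{W^{1,p}(V)}^{p}-C\|u\|_{W^{1,p}(V)}^{\bar{r}}\big)+\big(\tfrac1q\|v\|_{W^{1,q}(V)}^{q}-C\|v\|_{W^{1,q}(V)}^{\bar{s}}\big)$, which since $\bar{r}>p$, $\bar{s}>q$ and at least one of the two norms equals $\rho$ is $\geq\alpha>0$ for $\rho$ small. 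For the descent I would fix $e_{1}\in W^{1,p}(V)$, $e_{2}\in W^{1,q}(V)$ with $e_{1},e_{2}>0$ on $V$ and consider $t\mapsto(t^{\theta_{1}}e_{1},t^{\theta_{2}}e_{2})$: differentiating $t\mapsto\widehat{F}(x,t^{\theta_{1}}e_{1},t^{\theta_{2}}e_{2})$ and using $(H4)$ gives $\widehat{F}(x,t^{\theta_{1}}e_{1},t^{\theta_{2}}e_{2})\geq c(x)\,t$ for $t$ large, whereas the first two terms of $J$ only grow like $t^{\theta_{1}p}+t^{\theta_{2}q}$ with $\theta_{1}p<1$ and $\theta_{2}q<1$; hence $J(t^{\theta_{1}}e_{1},t^{\theta_{2}}e_{2})\rightarrow-\infty$ and there is an endpoint $e\in\mathbb{X}$ with $\|e\|_{\mathbb{X}}>\rho$ and $J(e)<0$. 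For $(PS)$: if $J(u_{k},v_{k})$ is bounded and $J'(u_{k},v_{k})\rightarrow 0$, evaluating $J'(u_{k},v_{k})$ at $(\theta_{1}u_{k},\theta_{2}v_{k})$ and using $(H2)$ and $(H4)$ yields $\big(\tfrac1p-\theta_{1}\big)\|u_{k}\|_{W^{1,p}(V)}^{p}+\big(\tfrac1q-\theta_{2}\big)\|v_{k}\|_{W^{1,q}(V)}^{q}\leq C+o(1)\big(\|u_{k}\|_{W^{1,p}(V)}+\|v_{k}\|_{W^{1,q}(V)}\big)$, so the sequence is bounded; $\mathbb{X}$ being finite dimensional, a subsequence converges strongly, and $J\in C^{1}$ forces the limit to be a critical point. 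The mountain--pass theorem then yields $(u,v)\in\mathbb{X}$ with $J'(u,v)=0$, $J(u,v)=c\geq\alpha>0$, so $(u,v)\neq(0,0)$ and $(u,v)$ solves the truncated system.

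Finally I would recover positivity. If $u$ attains a negative minimum at $x_{0}$, then $-\Delta_{p}u(x_{0})\leq 0$ since every neighbour value is $\geq u(x_{0})$, while $h(x_{0})|u(x_{0})|^{p-2}u(x_{0})<0$, so the left side of the first equation is negative, contradicting $\widehat{f}(x_{0},u(x_{0}),v(x_{0}))=f(x_{0},0,v(x_{0})^{+})\geq 0$ by $(H1)$; hence $u\geq 0$, similarly $v\geq 0$, so $\widehat{f}=f$, $\widehat{g}=g$ and $(u,v)$ solves the first two equations of the system. Since $(u,v)\neq(0,0)$, at least one component, say $u$, is not identically $0$; if $u$ vanished at a vertex $x_{1}$, evaluating the first equation there and using $h>0$, $f\geq 0$ and $\omega_{xy}>0$ forces $u$ to vanish at every neighbour of $x_{1}$, hence $u\equiv 0$ on the connected graph $G$ --- a contradiction; thus $u>0$ on $V$, and likewise $v>0$, which is exactly the content of the strong maximum principle established earlier in the paper. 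I expect the main obstacles to be, analytically, the bookkeeping in the geometry and $(PS)$ estimates, where the Ambrosetti--Rabinowitz exponents $\theta_{1}<1/p$, $\theta_{2}<1/q$ must be balanced against $(H2)$--$(H3)$ so that $J$ is bounded below on small spheres yet unbounded below globally; and, qualitatively, the positivity step, which has to handle the possible degeneracy of $\Delta_{p}$ at a would--be zero of $u$ and, more delicately, must exclude semitrivial critical points with $u\equiv 0$ or $v\equiv 0$ --- here the sign condition $(H1)$ together with the strong maximum principle is decisive.
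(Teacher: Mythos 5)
Your proposal follows essentially the same route as the paper: the same truncated functional $J(u,v)=\frac1p\|u\|_{W^{1,p}(V)}^{p}+\frac1q\|v\|_{W^{1,q}(V)}^{q}-\int_{V}F(x,u^{+},v^{+})\,d\psi$, the same mountain-pass geometry (small-sphere positivity from $(H2)$--$(H3)$, descent along a path with exponents $\theta_{1},\theta_{2}$ exploiting $(H4)$), the same Palais--Smale argument combining $(H4)$ with $\theta_{1}<1/p$, $\theta_{2}<1/q$ and the compact embedding on a finite graph, and the same final step of deducing $u,v\ge 0$ and then invoking the strong maximum principle (Lemma \ref{zhunze1}). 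Your variations --- proving nonnegativity via a pointwise negative-minimum argument rather than testing with $u^{-}$, and writing the descent path explicitly as $(t^{\theta_{1}}e_{1},t^{\theta_{2}}e_{2})$ --- are cosmetic, and the semitrivial-solution subtlety you flag at the end is present in the paper's own positivity argument as well.
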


Let $\mathbb{Q}=W^{m,p}(V)\times W^{n,q}(V)$ be the space with the norm
\begin{align*}
||(u,v)||_{\mathbb{Q}}=\max\Big{\{} ||u||_{W^{m,p}(V)}, ||v||_{W^{n,q}(V)}\Big{\}}.
\end{align*}
Analogous to Theorem \ref{T13}, we have the following Theorem \ref{T23} .
\begin{theorem}\label{T23}
Let $G =(V,E)$ be a finite graph and $h(x)>0$ for all $x\in V$.
 Suppose that $f(x,u): V\times \mathbb{R}\rightarrow \mathbb{R}$ and $g(x,v): V\times \mathbb{R}\rightarrow \mathbb{R}$ satisfy the following hypotheses:\\
\noindent$(H1)$ For any $x\in V$,  $f(x,t)$ and $g(x,t)$ are continuous in $t\in \mathbb{R}$,\\
\indent \indent and $f(x,0)=g(x,0)=0$ for all $x\in V$;\\
\noindent$(H2)$ $\limsup_{t\rightarrow 0}\frac{|f(x,t)|}{t}<\lambda_{mp(V)}=\inf_{u \not\equiv 0}\frac{\int_{V}(|\nabla^{m}u|^{p}+h|u|^{p})d\mu}{\int_{V}|u|^{p}d\mu}$;\\
\indent \indent$\limsup_{t\rightarrow 0}\frac{|g(x,t)|}{t}<\lambda_{nq(V)}=\inf_{v \not\equiv 0}\frac{\int_{V}(|\nabla^{n}v|^{q}+h|v|^{q})d\mu}{\int_{V}|v|^{q}d\mu}$;\\
\noindent$(H3)$ There exist $\theta_{0}>p,q$ and $s_{0}>0$ such that if $s\geq s_{0}$, then there hold\\
\indent \indent $0<\theta_{0} F(x,s)<f(x,s)s$ and $0<\theta_{0} G(x,s)<G(x,s)s$ for any $x\in V$, where \\
\indent \indent $F(x,s)=\int_{0}^{s}f(x,t)dt$ and $G(x,s)=\int_{0}^{s}g(x,t)dt$.\\
Then for $p,q\geq 2$, there exists a solution $(u,v)\in \mathbb{Q}\backslash \{(0,0)\}$ to the following problem
\begin{equation*}\label{E23}
\left \{
\begin{array}{lcr}
 \mathcal{L}_{m,p} u+h(x)|u|^{p-2}u=g(x,v)  &{\rm in}& V,\\
\mathcal{L}_{n,q} v+h(x)|v|^{q-2}v=f(x,u)  &{\rm in}& V.
 \end{array}
\right.
\end{equation*}
\end{theorem}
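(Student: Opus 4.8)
The system $(\ref{E23})$ is cross-coupled (Hamiltonian type): the term $g(x,v)$ drives the $u$-equation while $f(x,u)$ drives the $v$-equation. Unlike the gradient systems behind Theorems \ref{T3}--\ref{T4}, these two equations cannot be read off as the two partial derivatives of one ``diagonal'' functional. The plan is therefore to realize $(\ref{E23})$ as the Euler--Lagrange system of a single \emph{strongly indefinite} functional on $\mathbb{Q}$. In the representative case $m=n$, $p=q=2$, the operators $\mathcal{L}_{m,2}+h$ and $\mathcal{L}_{n,2}+h$ coincide and the natural candidate is
$$\Phi(u,v)=\int_{V}\big{(}u\,\mathcal{L}_{m,2}v+h(x)uv\big{)}d\psi-\int_{V}\big{(}F(x,u)+G(x,v)\big{)}d\psi,\qquad (u,v)\in\mathbb{Q},$$
whose critical points satisfy exactly $\mathcal{L}_{m,2}v+hv=f(x,u)$ and $\mathcal{L}_{m,2}u+hu=g(x,v)$, using the self-adjointness $\int_{V}u\,\mathcal{L}_{m,2}\psi\,d\psi=\int_{V}\psi\,\mathcal{L}_{m,2}u\,d\psi$. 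Since $G=(V,E)$ is finite, $\mathbb{Q}$ is finite dimensional and all its norms are equivalent, so the Palais--Smale condition reduces to the boundedness of Palais--Smale sequences and no Sobolev compactness is needed.

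Next I would build the linking geometry. The quadratic coupling splits $\mathbb{Q}=\mathbb{Q}^{+}\oplus\mathbb{Q}^{-}$ into the subspaces where it is positive, respectively negative, definite. Hypothesis $(H2)$ gives, for $|t|$ small, $|f(x,t)|\le a|t|$ and $|g(x,t)|\le a|t|$ with $a$ strictly below $\lambda_{mp(V)}$, $\lambda_{nq(V)}$, so $F$ and $G$ are dominated near the origin and $\Phi$ stays bounded below by a positive multiple of the norm on a small sphere inside one component of the splitting. Hypothesis $(H3)$, the Ambrosetti--Rabinowitz condition $0<\theta_{0}F(x,s)<f(x,s)s$ (and likewise for $g,G$) with $\theta_{0}>p,q$, forces $F(x,u)\ge c|u|^{\theta_{0}}-C$ and $G(x,v)\ge c|v|^{\theta_{0}}-C$; this drives $\Phi\to-\infty$ along the superlinear directions, supplying the outer frame of a linking configuration, and simultaneously gives
$$\Phi(u,v)-\tfrac{1}{\theta_{0}}\langle\Phi'(u,v),(u,v)\rangle\ \ge\ c\,\|(u,v)\|_{\mathbb{Q}}^{\min\{p,q\}}-C,$$
so every Palais--Smale sequence is bounded, hence (finite dimension) convergent along a subsequence. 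A generalized linking / mountain-pass theorem then produces a critical point at a strictly positive level, which is consequently different from $(0,0)$.

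The main obstacle is concentrated in the construction just sketched, and it is twofold. First, $\Phi$ is strongly indefinite, so an ordinary mountain pass does not apply; one must verify the full linking inequality---that the value of $\Phi$ on the linking surface exceeds its value on the boundary of the linked disc---and this is precisely where $(H2)$ and $(H3)$ have to be balanced against the two distinct eigenvalues. Second, and more seriously, the clean quadratic coupling above exists only when the two operators match; for $m\ne n$ or $p\ne q>1$ the $p$- and $q$-Laplacian energies are not quadratic and carry no symmetric bilinear pairing, so no such $\Phi$ exists. To reach the theorem in full generality I would instead argue topologically on the finite-dimensional space $\mathbb{Q}$: inverting the strictly monotone coercive operators, rewrite $(\ref{E23})$ as the fixed-point problem $(u,v)=\big{(}(\mathcal{L}_{m,p}+h|\cdot|^{p-2})^{-1}g(\cdot,v),\,(\mathcal{L}_{n,q}+h|\cdot|^{q-2})^{-1}f(\cdot,u)\big{)}$, use $(H2)$ to compute the Leray--Schauder index of the trivial fixed point on a small ball, use $(H3)$ to obtain an a priori bound excluding fixed points on large spheres, and conclude from the change of degree that a nontrivial solution exists. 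Establishing the a priori bound for the coupled superlinear right-hand sides, and computing the local index when $p,q>2$ makes the inverse operators non-differentiable at the origin, is the technical heart of the argument.
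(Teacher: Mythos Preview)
Your diagnosis of the system as cross-coupled (Hamiltonian type) is correct, and everything you build on top of it---the strongly indefinite functional, linking geometry, degree-theoretic fallback---is reasonable machinery for such a system. However, the paper takes a completely different and far simpler route: it applies the ordinary mountain-pass lemma (Lemma~\ref{mountain}) to the \emph{diagonal} functional
\[
J_{vmn}(u,v)=\frac{1}{p}\int_{V}\big(|\nabla^{m}u|^{p}+h|u|^{p}\big)d\psi+\frac{1}{q}\int_{V}\big(|\nabla^{n}v|^{q}+h|v|^{q}\big)d\psi-\int_{V}F(x,u)\,d\psi-\int_{V}G(x,v)\,d\psi,
\]
whose principal part is positive definite, not indefinite. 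The verification is then routine: $(H3)$ gives boundedness of Palais--Smale sequences and the large-norm negativity $J_{vmn}(tu,tv)\to-\infty$, $(H2)$ gives the local positivity on a small sphere via $F(x,t)\le\frac{\lambda}{p}|t|^{p}+C|t|^{p+1}$ with $\lambda<\lambda_{mp(V)}$, and pre-compactness of $W^{m,p}(V)$ on a finite graph closes the $(PS)_{c}$ condition. No linking, no inversion of operators, no index computation.

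What your analysis actually uncovers is an inconsistency in the paper itself: the Euler--Lagrange system of $J_{vmn}$ is the \emph{decoupled} pair
\[
\mathcal{L}_{m,p}u+h|u|^{p-2}u=f(x,u),\qquad \mathcal{L}_{n,q}v+h|v|^{q-2}v=g(x,v),
\]
not the cross-coupled system written in the statement of Theorem~\ref{T23} (the same mismatch occurs in Theorem~\ref{T3}). So your more elaborate strategy is the right one for the system \emph{as stated}, while the paper's proof in effect establishes the decoupled variant. If your goal is to reproduce the paper's argument, drop the indefinite/linking framework and run the direct mountain pass on $J_{vmn}$; if your goal is to prove the theorem as literally written, your proposal is on the right track but the paper will not help you.
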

\section{Preliminary analysis}\label{Preliminary}
\indent In this section, we introduce some preliminary results which are crucial to prove our main theorems.

 Note that $L^{p}(V)$, $W_{0}^{m,p}(\Omega)$ and $W^{m,p}(V)$ are three Banach spaces with their norms defined as in (\ref{866.11}), (\ref{86.11}) and (\ref{866.10}) respectively.
Firstly, we introduce the following three Sobolev embedding theorems (Lemma \ref{qianru}-\ref{LEMMA1}) on graphs.

\begin{lemma} $(\cite{AGY} ~Theorem ~7)$ \label{qianru}
Let $G =(V,E)$ be a locally finite graph and $\Omega$ be a bounded domain of $V$ with $\Omega^{0}\neq \emptyset$.
Let $m$ be any positive integer and $p > 1$. Then
 $W_{0}^{m,p}(\Omega)$ is embedded in $ L^{q}(\Omega)$ for all
$1\leq  q \leq +\infty$. In particular, there exists a constant $C$ depending only on $p$, $m$ and $\Omega$
 such that for all $u\in W_{0}^{m,p}(\Omega)$
$$(\int_{\Omega}|u|^{q}d\psi)^{1/q}\leq C(\int_{\Omega}|\nabla^{m} u|^{p}d\psi)^{1/p}.$$
 Moreover, $W_{0}^{m,p}(\Omega)$ is pre-compact, namely, if $\{u_{k}\}$ is
bounded in $W_{0}^{m,p}(\Omega)$, then up to a subsequence, still denoted by $\{u_{k}\}$, there exists some $u\in W_{0}^{m,p}(\Omega)$
 such that $u_{k}\rightarrow u$ in $ W_{0}^{m,p}(\Omega)$.
\end{lemma}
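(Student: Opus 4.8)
The plan is to exploit that a bounded domain in a locally finite graph contains only finitely many vertices, so that $W_0^{m,p}(\Omega)$ is finite-dimensional and the assertion collapses to the equivalence of norms on a finite-dimensional space.

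First I would verify that $\Omega$ is a finite set. In a locally finite graph each ball $\{x\in V:\rho(x,O)\le R\}$ is finite, by induction on $R$: the ball of radius $R+1$ is contained in the ball of radius $R$ together with the (finitely many) neighbours of its (finitely many) vertices. Since $\Omega$ is bounded it lies inside some such ball, hence is finite. After the tacit extension by $0$ off $\Omega$, a function in $C_0^m(\Omega)$ is determined by its finitely many values on $\Omega$, so $C_0^m(\Omega)$ is a finite-dimensional real vector space; being finite-dimensional it is already complete under every norm, whence $W_0^{m,p}(\Omega)=C_0^m(\Omega)$ and $W_0^{m,p}(\Omega)$ is a closed, finite-dimensional space.

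Next I would observe that the full norm $\|\cdot\|_{W^{m,p}(\Omega)}$ of (\ref{86.10}) and all the quantities $\|\cdot\|_{L^q(\Omega)}$, $1\le q\le+\infty$, are genuine norms on this finite-dimensional space (for the first this is clear, since (\ref{86.10}) already contains the $L^p(\Omega)$-term). By the equivalence of all norms on a finite-dimensional space there is a constant $C=C(p,m,\Omega)$ with $\|u\|_{L^q(\Omega)}\le C\,\|u\|_{W^{m,p}(\Omega)}$ for every $u\in W_0^{m,p}(\Omega)$, and combining this with the identity (\ref{86.11}), $\|u\|_{W_0^{m,p}(\Omega)}=(\int_\Omega|\nabla^m u|^p d\psi)^{1/p}$, yields exactly the claimed inequality $(\int_\Omega|u|^q d\psi)^{1/q}\le C(\int_\Omega|\nabla^m u|^p d\psi)^{1/p}$. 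The content of (\ref{86.11}) --- the one non-formal ingredient --- is a discrete Poincar\'e inequality on $C_0^m(\Omega)$: the iterated boundary conditions $u=|\nabla u|=\cdots=|\nabla^{m-1}u|=0$ on $\partial\Omega$, together with a connectedness argument, make $|\nabla^m u|\equiv0$ on $\Omega$ force $u\equiv0$, so that $(\int_\Omega|\nabla^m u|^p d\psi)^{1/p}$ is a norm and controls the lower-order terms in (\ref{86.10}).

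Finally, precompactness is automatic once finite-dimensionality is in hand: if $\{u_k\}$ is bounded in $W_0^{m,p}(\Omega)$, then by equivalence of norms it is bounded in the sup-norm on the finite set $\Omega$, so by the Bolzano--Weierstrass theorem a subsequence converges pointwise on $\Omega$ to some $u$; since $W_0^{m,p}(\Omega)$ is closed we get $u\in W_0^{m,p}(\Omega)$, and pointwise convergence on a finite set is the same as convergence in the $W_0^{m,p}(\Omega)$ norm. The only step that is not soft finite-dimensional linear algebra is the discrete Poincar\'e inequality underpinning (\ref{86.11}), together with --- for $m\ge2$ --- organizing cleanly the inward propagation of the vanishing of $u,\Delta u,\ldots$ from $\partial\Omega$ into $\Omega^{\circ}$; I expect that to be the main point requiring care.
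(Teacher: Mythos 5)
The paper offers no proof of this lemma --- it is quoted verbatim as Theorem~7 of \cite{AGY} --- but your argument is essentially the one given in that reference: a bounded domain of a locally finite graph is a finite set, so $W_0^{m,p}(\Omega)$ is finite-dimensional and everything reduces to equivalence of norms together with the discrete Poincar\'e step showing that $\bigl(\int_{\Omega}|\nabla^{m}u|^{p}\,d\psi\bigr)^{1/p}$ is a genuine norm on $C_0^{m}(\Omega)$, i.e.\ that $|\nabla^{m}u|\equiv 0$ plus the iterated boundary conditions forces $u\equiv 0$. Your sketch is correct; the only points worth pinning down are (i) the inward propagation of vanishing for $m\ge 2$ (a maximum-principle induction on $\Delta^{k}u$), which you already flag and which uses the connectedness hypothesis left tacit in the statement, and (ii) that the constant can be taken independent of $q$ by routing through $L^{\infty}$, since $\|u\|_{L^{q}(\Omega)}\le\bigl(\int_{\Omega}d\psi\bigr)^{1/q}\|u\|_{L^{\infty}(\Omega)}$ and $\Omega$ has finite total measure.
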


\begin{lemma} $(\cite{AGY} ~Theorem ~8)$ \label{qianru2}
Let $G =(V,E)$ be a finite graph. Then for all $s>1$,  $W^{m,s}(V)$ is embedded in $ L^{q}(V)$
for all $1 \leq q \leq +\infty$, where $m$ is any positive integer. In particular, there exists a constant C depending only on $s$,$m$ and $V$
such that
$$ (\int_{V}|u|^{q}d\psi)^{1/q}\leq C(\int_{V}(|\nabla^{m} u|^{s}+h(x)|u|^{s})d\psi)^{1/s}.$$
 Moreover, $W^{m,s}(V)$ is pre-compact, namely, if $\{u_{k}\}$ is
bounded in $W^{m,s}(V)$, then us to a subsequence, there exists some $u\in W^{m,s}(V)$
 such that $u_{k}\rightarrow u$ in $W^{m,s}(V)$.
\end{lemma}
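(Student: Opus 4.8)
The plan is to exploit the fact that on a finite graph every function space in sight is finite-dimensional, so the Sobolev embedding degenerates to an elementary equivalence of norms. First I would record the basic finiteness facts: since $V$ is finite, a function $u:V\to\mathbb{R}$ is nothing but a point of $\mathbb{R}^{|V|}$, and each $\Delta^{k}u$, hence $|\nabla^{m}u|$ via (\ref{86.6}), is a fixed finite-valued expression in the coordinates $\{u(y)\}_{y\in V}$; therefore $||u||_{W^{m,s}(V)}<+\infty$ for every such $u$, and $W^{m,s}(V)=\mathbb{R}^{|V|}$ as a set. Writing $\psi_{m}=\min_{x\in V}\psi(x)$, $\psi_{M}=\max_{x\in V}\psi(x)$ and $h_{m}=\min_{x\in V}h(x)$, finiteness of $V$ gives $\psi_{m},h_{m}>0$ and $\psi_{M}<+\infty$. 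I would also note that $||\cdot||_{W^{m,s}(V)}$ is a genuine norm: $\int_{V}h|u|^{s}d\psi=\sum_{x\in V}\psi(x)h(x)|u(x)|^{s}=0$ forces $u\equiv0$, and this is precisely the place where the hypothesis $h>0$ is used.

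Next I would prove the embedding by passing through $L^{\infty}$. For any $x_{0}\in V$,
$$h_{m}\psi_{m}|u(x_{0})|^{s}\leq \psi(x_{0})h(x_{0})|u(x_{0})|^{s}\leq \sum_{x\in V}\psi(x)h(x)|u(x)|^{s}\leq \int_{V}\big(|\nabla^{m}u|^{s}+h|u|^{s}\big)d\psi,$$
so $||u||_{L^{\infty}(V)}\leq (h_{m}\psi_{m})^{-1/s}\,||u||_{W^{m,s}(V)}$; here only the zeroth-order part of the $W^{m,s}$-norm is used, the $|\nabla^{m}u|^{s}$-term being simply discarded. Conversely, for $1\leq q<+\infty$ one has $\int_{V}|u|^{q}d\psi=\sum_{x\in V}\psi(x)|u(x)|^{q}\leq \psi_{M}|V|\,||u||_{L^{\infty}(V)}^{q}$, hence $||u||_{L^{q}(V)}\leq(\psi_{M}|V|)^{1/q}||u||_{L^{\infty}(V)}$, while the case $q=+\infty$ is trivial. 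Combining the two estimates yields $||u||_{L^{q}(V)}\leq C\,||u||_{W^{m,s}(V)}$ with a constant $C$ independent of $u$ (depending only on $q$, $h$ and the graph), which is the asserted continuous embedding $W^{m,s}(V)\hookrightarrow L^{q}(V)$ for all $1\leq q\leq +\infty$.

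Finally, for the pre-compactness I would simply invoke finite-dimensional linear algebra: on $\mathbb{R}^{|V|}$ all norms are equivalent, so $||\cdot||_{W^{m,s}(V)}$ is equivalent to the Euclidean norm. If $\{u_{k}\}$ is bounded in $W^{m,s}(V)$ it is therefore bounded in $\mathbb{R}^{|V|}$, so by the Bolzano--Weierstrass theorem it has a subsequence converging in $\mathbb{R}^{|V|}$ to some $u$; by equivalence of norms this subsequence converges to $u$ in $W^{m,s}(V)$, and $u\in W^{m,s}(V)=\mathbb{R}^{|V|}$, as required.

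There is no genuine difficulty to overcome here: the entire content of the lemma is the reduction ``finite graph $\Rightarrow$ finite-dimensional $\Rightarrow$ all norms equivalent and closed balls compact.'' The only two points that merit even a moment's care are the positive-definiteness of $||\cdot||_{W^{m,s}(V)}$, which forces the use of $h>0$, and the verification that the discrete $m$-th order gradient is a well-defined, finite-valued operator on $\mathbb{R}^{|V|}$, so that the norm is meaningful on all of $\mathbb{R}^{|V|}$.
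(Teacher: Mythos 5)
Your proof is correct and is essentially the standard argument behind this lemma, which the paper does not prove itself but imports from \cite{AGY} (Theorem 8): on a finite graph $W^{m,s}(V)$ is just $\mathbb{R}^{|V|}$ with a norm whose zeroth-order part already controls $\|u\|_{L^{\infty}(V)}$ via $\min_{x}\psi(x)h(x)>0$, the $L^{q}$ bounds follow by summing, and pre-compactness is Bolzano--Weierstrass plus equivalence of norms in finite dimensions. The only cosmetic caveat is that your constant also depends on $h$ (through $h_{m}$) and a priori on $q$, but since $(\psi_{M}|V|)^{1/q}\leq\max\{1,\psi_{M}|V|\}$ it can be chosen uniformly in $q$, matching the statement.
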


Now, we consider the Sobolev embedding theorem for $\mathcal{H}^{m,2}(V) $ defined as in (\ref{tod5}) for any integer $m\geq 1$ on graph $G=(V,E)$.
For all $ x\in V$, if we assume that there exists a constant $\omega_{0}>0$ such that $\omega_{xy}\geq\omega_{0}$ for all $y\sim x$, we can obtain that
there exists a constant $\psi_{0}>0$ such that $\psi(x)\geq\psi_{0}$ in the case of that $G = (V,E)$ is a connected finite (or locally finite) graph.

\begin{lemma}\label{LEMMA1}
Suppose that $G = (V,E)$ is a connected finite (or locally finite) graph. For all $ x\in V$, let the distance function $\rho(x) = \rho(x, O) \in L^{q}(V)$ for some $q > 0$ and $O \in V$, and assume that there exists a constant $\omega_{0}>0$ such that $\omega_{xy}\geq\omega_{0}$ for all $y\sim x$. Let $u$ be a harmonic eigenfunction of $-\Delta$ on $G$ with eigenvalue $\lambda_{1}$. Then, for any integer $m\geq 1$, we have
\begin{align*}
||u(x)||_{L^{q}(V)}\leq C^{*}||u||_{\mathcal{H}^{m,2}(V)},
\end{align*}
where
\begin{align*}
C^{*}= &\max\Big{\{}\frac{2\sqrt{2}\lambda^{\frac{1-m}{2}}_{1}}{\sqrt{\omega_{0}}}\cdot ||\rho||_{L^{q}(V)}, \frac{2^{\frac{1}{q}+1}\max\big{\{} ||\rho||_{L^{q}(V)},  \psi(O)^{\frac{1}{q}}\big{\}}}{\sqrt{ \psi(O)}}, \\
&\frac{2^{\frac{1}{q}}\max\big{\{} ||\rho||_{L^{q}(V)},  \psi(O)^{\frac{1}{q}}\big{\}}}{\sqrt{\psi_{0}(\lambda^{m}_{1}+1)}}\Big{\}}.
\end{align*}
In particular,
\begin{align}\label{888888}
||u(x)||_{L^{\infty}(V)}\leq C_{*} ||u(x)||_{\mathcal{H}^{m,2}(V)},
\end{align}
where $C_{*}=\max\Big{\{}\frac{\sqrt{2}\lambda^{\frac{1-m}{2}}_{1}}{\sqrt{\omega_{0}}}\cdot \rho(x)+\frac{1}{\sqrt{ \psi(O)}}, \frac{1}{\sqrt{\psi_{0}(\lambda^{m}_{1}+1)}}\Big{\}}$.
Moreover, $\mathcal{H}^{m,2}(V)$ is pre-compact, namely, if $\{u_{k}\}$ is
bounded in $\mathcal{H}^{m,2}(V)$, then up to a subsequence, there exists some $u\in \mathcal{H}^{m,2}(V)$
such that $u_{k}\rightarrow u$ in $\mathcal{H}^{m,2}(V)$.

\end{lemma}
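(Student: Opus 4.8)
The plan is to convert the eigenfunction identity $-\Delta u=\lambda_{1}u$ into an algebraic relation among $\int_{V}|\nabla^{m}u|^{2}d\psi$, $\int_{V}|\nabla u|^{2}d\psi$ and $\int_{V}u^{2}d\psi$, then to derive a pointwise estimate for $u$ by summing first differences along a geodesic issuing from $O$; the $L^{q}$ and $L^{\infty}$ bounds will follow by raising the pointwise estimate to a power and summing against $\psi$, and the pre-compactness by viewing $\mathcal{H}^{m,2}(V)$ as a closed subspace of $W^{m,2}(V)$ and invoking Lemma \ref{qianru2}.

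First, from $-\Delta u=\lambda_{1}u$ one gets $\Delta u=-\lambda_{1}u$, hence $\Delta^{\frac{m}{2}}u=(-\lambda_{1})^{\frac{m}{2}}u$ when $m$ is even and $\Delta^{\frac{m-1}{2}}u=(-\lambda_{1})^{\frac{m-1}{2}}u$ when $m$ is odd; by the definition (\ref{86.6}) of $|\nabla^{m}u|$ this yields $|\nabla^{m}u|=\lambda_{1}^{m/2}|u|$ ($m$ even) and $|\nabla^{m}u|=\lambda_{1}^{(m-1)/2}|\nabla u|$ ($m$ odd). Combined with the integration-by-parts formula on graphs $\int_{V}|\nabla u|^{2}d\psi=\int_{V}(-\Delta u)u\,d\psi=\lambda_{1}\int_{V}u^{2}d\psi$, in either parity
\begin{equation*}
\int_{V}|\nabla^{m}u|^{2}d\psi=\lambda_{1}^{m}\int_{V}u^{2}d\psi,\qquad ||u||_{\mathcal{H}^{m,2}(V)}^{2}=(\lambda_{1}^{m}+1)\int_{V}u^{2}d\psi,
\end{equation*}
so in particular $\int_{V}|\nabla u|^{2}d\psi=\lambda_{1}^{1-m}\int_{V}|\nabla^{m}u|^{2}d\psi\le\lambda_{1}^{1-m}||u||_{\mathcal{H}^{m,2}(V)}^{2}$ and $\psi(z)u(z)^{2}\le\int_{V}u^{2}d\psi=(\lambda_{1}^{m}+1)^{-1}||u||_{\mathcal{H}^{m,2}(V)}^{2}$ for each $z\in V$.

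Next, fix $x\in V$ and a shortest path $O=x_{0}\sim x_{1}\sim\cdots\sim x_{k}=x$ with $k=\rho(x)$. Since $\omega_{x_{i}x_{i+1}}\ge\omega_{0}$ and the term $\omega_{x_{i}x_{i+1}}(u(x_{i+1})-u(x_{i}))^{2}$ is a single summand of $\sum_{z}\sum_{y\sim z}\omega_{zy}(u(y)-u(z))^{2}=2\int_{V}|\nabla u|^{2}d\psi$, each first difference obeys $|u(x_{i+1})-u(x_{i})|\le\sqrt{2/\omega_{0}}\,(\int_{V}|\nabla u|^{2}d\psi)^{1/2}$; telescoping and inserting the previous step gives $|u(x)|\le|u(O)|+\rho(x)\sqrt{2/\omega_{0}}\,\lambda_{1}^{(1-m)/2}||u||_{\mathcal{H}^{m,2}(V)}$. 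Together with $|u(O)|\le\psi(O)^{-1/2}||u||_{\mathcal{H}^{m,2}(V)}$ and the uniform bound $|u(z)|\le(\psi_{0}(\lambda_{1}^{m}+1))^{-1/2}||u||_{\mathcal{H}^{m,2}(V)}$ derived above, this yields (\ref{888888}) with the stated $C_{*}$. For the $L^{q}$ bound one raises the pointwise estimate to the power $q$, multiplies by $\psi(x)$ and sums over $V$, using that $\rho(x)\ge1$ for $x\ne O$ so that $\sum_{x}\psi(x)\le\psi(O)+\sum_{x\ne O}\psi(x)\rho(x)^{q}=\psi(O)+||\rho||_{L^{q}(V)}^{q}$ (which in particular forces $V$ to be finite under the hypotheses); the elementary inequalities $(a+b)^{q}\le2^{q}(a^{q}+b^{q})$ and $(a+b)^{1/q}\le2^{1/q}\max\{a^{1/q},b^{1/q}\}$ then reorganize the sum into $||u||_{L^{q}(V)}\le C^{*}||u||_{\mathcal{H}^{m,2}(V)}$, the three entries of the $\max$ in $C^{*}$ arising respectively from the geodesic term $\rho(x)(\int_{V}|\nabla u|^{2}d\psi)^{1/2}$, from $|u(O)|$, and from the uniform bound on $|u(z)|$.

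Finally, the linear map $u\mapsto-\Delta u-\lambda_{1}u$ is continuous on $W^{m,2}(V)$, so $\mathcal{H}^{m,2}(V)$ is a closed subspace of $W^{m,2}(V)$; since $W^{m,2}(V)$ is pre-compact by Lemma \ref{qianru2}, any bounded sequence in $\mathcal{H}^{m,2}(V)$ has a subsequence converging in $W^{m,2}(V)$ whose limit still satisfies $-\Delta u=\lambda_{1}u$ and hence lies in $\mathcal{H}^{m,2}(V)$, giving the asserted pre-compactness. The norm reduction and the pre-compactness step are routine; the only care needed is to keep the weights $\psi(x_{i})$ out of the telescoping sum — which is exactly what the ``single summand'' observation achieves — and to track the powers of $2$ produced by the $(a+b)^{q}$-inequalities in order to recover precisely the constant $C^{*}$ of the statement.
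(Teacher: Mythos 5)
Your proof is correct and follows essentially the same route as the paper: the eigenfunction identity reduces $|\nabla^{m}u|$ to $\lambda_{1}^{(m-1)/2}|\nabla u|$ or $\lambda_{1}^{m/2}|u|$, a telescoping sum along a shortest path from $O$ controls $|u(x)|$ by $\rho(x)$ times the gradient energy, and the $L^{q}$ bound follows by summing against $\psi$ with the same elementary power inequalities. The only differences are cosmetic improvements: you derive both the geodesic bound and the uniform bound for every parity of $m$ (the paper uses the former only for odd $m$ and the latter only for even $m$ before taking the max), and you supply the closed-subspace argument for pre-compactness, which the paper asserts without proof.
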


\begin{proof}

Let $O$ be fixed in $V$. Choose a shortest path $P= \{x_{1}, \cdot\cdot\cdot , x_{k+1}\}$ connecting $x$ and $O$,
where $x_{1}= x, \cdot\cdot\cdot , x_{k+1} =O$, $x_{i}$ is adjacent to $x_{i+1}$ for all $1 \leq i \leq k$, and $k = \rho(x)$.

Since $\rho\in L^{q}(V)$ for some $q>0$ and $\rho(x,y)\geq1$ for all $x\neq y$, by (\ref{866.11}), we have
\begin{align} \label{10}
|||1|_{L^{q}(V)}=\big{(} \sum_{z\in V} \psi(z) \big{)}^{\frac{1}{q}}\leq& \big{(} \sum_{z\in V} \psi(z)\rho^{q} (z)+\psi(O)\big{)}^{\frac{1}{q}} \notag\\
\leq & 2^{\frac{1}{q}}\max\big{\{}\big{(} \sum_{z\in V} \psi(z)\rho^{q} (z)\big{)}^{\frac{1}{q}},  \psi(O)^{\frac{1}{q}}\big{\}}\notag\\
\leq & 2^{\frac{1}{q}}\max\big{\{}||\rho||_{L^{q}(V)},  \psi(O)^{\frac{1}{q}}\big{\}}.
\end{align}

Noting that for all $m\geq 1$,
\begin{align*}
||u||_{\mathcal{H}^{m,2}(V)}= &\Big{(}\int_{V}(|\nabla^{m}u|^{2}+|u|^{2})d\psi\Big{)}^{\frac{1}{2}} \notag\\
 \geq & \Big{(}\sum_{z\in V}|u(z)|^{2}\psi(z)\Big{)}^{\frac{1}{2}}\notag\\
  \geq &\sqrt{\psi(O)}|u(O)|,
\end{align*}
we have
\begin{align} \label{13}
|u(O)|\leq  \frac{1}{\sqrt{ \psi(O)}}||u||_{\mathcal{H}^{m,2}(V)}.
\end{align}

For any $u \in \mathcal{H}^{m,2}(V)$, it is easy to see that
\begin{align} \label{11}
|u(x)|\leq |u(x_{1})-u(x_{2})|+\cdot\cdot\cdot+|u(x_{k})-u(x_{k+1})|+|u(O)|.
\end{align}

Letting $u$ denote a harmonic eigenfunction achieving $\lambda_{1}$ ,
 for any vertex $x\in V$, by (\ref{1.13}), we have
\begin{align}
-\Delta u(x)=\lambda_{1} u(x),\cdots,
\Delta^{n} u(x)=\Delta\big{(}\Delta^{n-1} u(x)\big{)}=(-1)^{n}\lambda^{n}_{1} u(x),
\end{align}
 where $n$ is a positive integer.

In view of (\ref{86.6}), when $m$ is an odd number, we have
\begin{align}
||u||_{\mathcal{H}^{m,2}(V)}&=\big{(}\int_{V}(|\nabla^{m}u|^{2}+|u|^{2})d\psi\big{)}^{\frac{1}{2}} \notag\\
 & \geq \bigg{(}\sum_{z\in V} |\nabla^{m} u(z)|^{2}\psi(z)\bigg{)}^{\frac{1}{2}}\notag\\
 &=\bigg{(}\sum_{z\in V} |\nabla\Delta^{\frac{m-1}{2}} u(z)|^{2}\psi(z)\bigg{)}^{\frac{1}{2}}\notag\\
 &=  \Bigg{(}\sum_{z\in V} \psi(z) \cdot \Big{(}\frac{1}{2\psi(z)} \sum_{y\sim z}\omega_{yz}\big{(}\Delta^{\frac{m-1}{2}}u(y)-\Delta^{\frac{m-1}{2}}u(z)\big{)}^{2}  \Big{)}\Bigg{)}^{\frac{1}{2}}\notag\\
 &=  \Bigg{(}\sum_{z\in V} \psi(z) \cdot\lambda^{m-1}_{1}\cdot \Big{(}\frac{1}{2\psi(z)} \sum_{y\sim z}\omega_{yz}\big{(}u(y)-u(z)\big{)}^{2} \Big{)}\Bigg{)}^{\frac{1}{2}}\notag\\
&\geq  \frac{\sqrt{2}}{2} \cdot\lambda_{1}^{\frac{m-1}{2}}\max_{1\leq i\leq k}\sqrt{\omega_{x_{i}x_{i+1}}} |u(x_{i})-u(x_{i+1})|  \notag\\
&\geq \frac{\sqrt{2}}{2} \cdot\lambda^{\frac{m-1}{2}}_{1}\cdot\sqrt{\omega_{0}}\max_{1\leq i\leq k}|u(x_{i})-u(x_{i+1})|,
\end{align}
and this together with(\ref{13}) and (\ref{11}), we get
\begin{align} \label{30}
|u(x)|\leq &  k\max_{1\leq i\leq k}|u(x_{i})-u(x_{i+1})|+|u(O)| \notag\\
\leq &\big{(}\frac{\sqrt{2}\lambda^{\frac{1-m}{2}}_{1}}{\sqrt{\omega_{0}}}\cdot \rho(x)+\frac{1}{\sqrt{ \psi(O)}}\big{)}||u||_{\mathcal{H}^{m,2}(V)}.
\end{align}
 When $m$ is an even number, we have
\begin{align}\label{60}
||u||_{\mathcal{H}^{m,2}(V)}= &\Big{(}\int_{V}(|\nabla^{m}u|^{2}+|u|^{2})d\psi\Big{)}^{\frac{1}{2}} \notag\\
 =& \bigg{(}\sum_{z\in V}\big{(} |\Delta^{\frac{m}{2}} u(z)|^{2}+|u(z)|^{2}\big{)}\psi(z)\bigg{)}^{\frac{1}{2}}\notag\\
  =& \bigg{(}\sum_{z\in V}\big{(} \lambda^{m}_{1}|u(z)|^{2}+|u(z)|^{2}\big{)}\psi(z)\bigg{)}^{\frac{1}{2}}\notag\\
 \geq&\sqrt{\psi_{0}(\lambda^{m}_{1}+1)}|u(x)|,
\end{align}
and thus, by (\ref{60}), we get
\begin{align}\label{61}
|u(x)|\leq \frac{1}{\sqrt{\psi_{0}(\lambda^{m}_{1}+1)}}||u||_{\mathcal{H}^{m,2}(V)}.
\end{align}
By (\ref{30}) and (\ref{61}), for anr integer $m\geq 1$, we have
\begin{align*}
|u(x)|\leq& \max
\Big{\{}\frac{\sqrt{2}\lambda^{\frac{1-m}{2}}_{1}}{\sqrt{\omega_{0}}}\cdot \rho(x)+\frac{1}{\sqrt{ \psi(O)}}, \frac{1}{\sqrt{\psi_{0}(\lambda^{m}_{1}+1)}}\Big{\}}||u||_{\mathcal{H}^{m,2}(V)}\\
\leq& \max
\Big{\{}\frac{2\sqrt{2}\lambda^{\frac{1-m}{2}}_{1}}{\sqrt{\omega_{0}}}\cdot \rho(x), \frac{2}{\sqrt{ \psi(O)}}, \frac{1}{\sqrt{\psi_{0}(\lambda^{m}_{1}+1)}}\Big{\}}||u||_{\mathcal{H}^{m,2}(V)},
\end{align*}
which implies $(\ref{888888})$.
Thus, we have
\begin{align*}
||u(x)||_{L^{q}(V)}=&(\int_{V}|u(x)|^{q}d\psi)^{\frac{1}{q}}\notag\\
\leq& \max
\Big{\{}\frac{2\sqrt{2}\lambda^{\frac{1-m}{2}}_{1}}{\sqrt{\omega_{0}}}\cdot ||\rho||_{L^{q}(V)}, \frac{2||1||_{L^{q}(V)}}{\sqrt{ \psi(O)}}, \frac{||1||_{L^{q}(V)}}{\sqrt{\psi_{0}(\lambda^{m}_{1}+1)}}\Big{\}}||u||_{\mathcal{H}^{m,2}(V)}\\
\leq& \max
\Big{\{}\frac{2\sqrt{2}\lambda^{\frac{1-m}{2}}_{1}}{\sqrt{\omega_{0}}}\cdot ||\rho||_{L^{q}(V)}, \frac{2^{\frac{1}{q}+1}\max\big{\{} ||\rho||_{L^{q}(V)},  \psi(O)^{\frac{1}{q}}\big{\}}}{\sqrt{ \psi(O)}}, \\
&\frac{2^{\frac{1}{q}}\max\big{\{} ||\rho||_{L^{q}(V)},  \psi(O)^{\frac{1}{q}}\big{\}}}{\sqrt{\psi_{0}(\lambda^{m}_{1}+1)}}\Big{\}}||u||_{\mathcal{H}^{m,2}(V)}\\
&= C^{*}||u||_{\mathcal{H}^{m,2}(V)},\notag
\end{align*}
where
\begin{align*}
C^{*}= &\max\Big{\{}\frac{2\sqrt{2}\lambda^{\frac{1-m}{2}}_{1}}{\sqrt{\omega_{0}}}\cdot ||\rho||_{L^{q}(V)}, \frac{2^{\frac{1}{q}+1}\max\big{\{} ||\rho||_{L^{q}(V)},  \psi(O)^{\frac{1}{q}}\big{\}}}{\sqrt{ \psi(O)}}, \\
&\frac{2^{\frac{1}{q}}\max\big{\{} ||\rho||_{L^{q}(V)},  \psi(O)^{\frac{1}{q}}\big{\}}}{\sqrt{\psi_{0}(\lambda^{m}_{1}+1)}}\Big{\}}.
\end{align*}
\end{proof}

\begin{remark}\label{remark1}
Let $G = (V,E)$ be a connected finite (or locally finite) graph and $\Omega$ be a bounded domain of $V$ with $\Omega^{\circ}\neq\emptyset$.
By Lemma \ref{qianru} and Lemma $\ref{LEMMA1}$, since
$$(\int_{\Omega}|u|^{q}d\psi)^{1/q}\leq C(\int_{\Omega}|\nabla^{m} u|^{p}d\psi)^{1/p},$$
the norm
\begin{align*}
||u||_{\mathcal{H}_{0}^{m,2}(\Omega)}=\Big{(}\int_{\Omega}(|\nabla^{m}u|^{2}+|u|^{2})d\psi\Big{)}^{\frac{1}{2}}
\end{align*}
is equivalent to the norm
\begin{align*}
||u||_{\mathcal{\mathcal{H}}_{0}^{m,2}(\Omega)}=\Big{(}\int_{\Omega}|\nabla^{m}u|^{2}d\psi\Big{)}^{\frac{1}{2}},
\end{align*}
where $p>1$, $q\geq1$ and integer $m\geq 1$.
\end{remark}

\begin{remark}\label{remark2}
Let $G = (V,E)$ be a connected finite (or locally finite) graph and $\Omega$ be a bounded domain of $V$ with $\Omega^{\circ}\neq\emptyset$.
From Lemma $\ref{qianru}$-$\ref{LEMMA1}$, we have the following results:

$(1)$  Set $\maltese=W^{m,p}(V)\times W^{n,q}(V)$ and the norm on $\maltese$ is defined as
\begin{align*}
||(u,v)||_{\maltese}=\max\big{\{} ||u||_{W^{m,p}(V)}, ||v||_{ W^{n,q}(V)}\big{\}}.
\end{align*}
Take a sequence of functions
$(\hat{u}_{j}, \hat{v}_{j})\in \maltese$.
 If there exist $ u\in W^{m,p}(V)$ and $v\in W^{n,q}(V)$ such that $||\hat{u}_{j}-u||_{W^{m,p}(V)}\rightarrow 0$ and
 $||\hat{v}_{j}-v||_{W^{n,q}(V)}\rightarrow 0$ as $j\rightarrow \infty$,
we can easily get that
$||(\hat{u}_{j},\hat{v}_{j})-(u,v)||_{\maltese}= \max\big{\{}||\hat{u}_{j}-u||_{W^{m,p}(V)}, ||\hat{v}_{j}-v||_{W^{n,q}(V)}\big{\}}\rightarrow 0$ as $j\rightarrow \infty$,
 Thus, $(\hat{u}_{j},\hat{v}_{j})\rightarrow(u,v)$ in $\maltese$ as $j\rightarrow \infty$. In addition, if the norm on $\maltese$ is defined as
\begin{align*}
||(u,v)||_{\maltese}= ||u||_{W^{m,p}(V)}+ ||v||_{ W^{n,q}(V)},
\end{align*}
we can easily get the same result.

$(2)$ The results in $(1)$ are also applicable to the spaces $\maltese_{1}=W_{0}^{m,p}(\Omega)\times W_{0}^{n,q}(\Omega)$,
     $\maltese_{2}=\mathcal{H}^{m,2}(V)\times \mathcal{H}^{n,2}(V)$
    and $\maltese_{3}=\mathcal{H}_{0}^{m,2}(\Omega)\times \mathcal{H}_{0}^{n,2}(\Omega)$.

\end{remark}

Now, we get a version of the strong maximum principle on graphs (Lemma \ref{zhunze1}).
\begin{lemma}$(Strong ~maximum~ principle)$ \label{zhunze1}
Let $G = (V, E)$ be a connected and finite (or locally finite) graph.
Assume that $u(x)\geq 0$, $v(x)\geq 0$, $p,q\geq 2$ and
\begin{equation*}
\left \{
\begin{array}{lcr}
 -\Delta_{p} u + h_{1}(x)|u|^{p-2}u\geq 0, \\
-\Delta_{q} v + h_{2}(x)|v|^{q-2}v\geq 0
 \end{array}
\right.
\end{equation*}
for all $x\in V$,
where $h_{1}(x)$ and $h_{2}(x)$ are any two functions defined on $V$.
If there exist $x_{0}\in V$ such that $u(x_{0}) = 0$ and $x_{*}\in V$ such that $v(x_{*}) = 0$ , then $\big{(}u(x),v(x)\big{)}\equiv (0,0)$ for all $x\in V$.
\end{lemma}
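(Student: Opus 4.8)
The plan is to treat the two inequalities independently, since they are decoupled: one involves only $u$ and $h_1$, the other only $v$ and $h_2$. I will first establish the auxiliary fact that if $w(x)\ge 0$ on $V$, $p\ge 2$, and $-\Delta_p w + h(x)|w|^{p-2}w \ge 0$ everywhere, and if $w(x_0)=0$ for some $x_0$, then $w\equiv 0$ on the connected component of $x_0$ — which, by connectedness of $G$, is all of $V$. Applying this claim to $u$ with $h=h_1$ and to $v$ with $h=h_2$ gives $u\equiv 0$ and $v\equiv 0$ respectively, hence $(u,v)\equiv(0,0)$, as required.

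To prove the claim I would argue pointwise at the zero vertex. Suppose $w\ge0$, $w(x_0)=0$. Evaluate the inequality $-\Delta_p w(x_0) + h(x_0)|w(x_0)|^{p-2}w(x_0)\ge 0$ at $x_0$. Since $w(x_0)=0$, the second term vanishes, so $-\Delta_p w(x_0)\ge 0$, i.e. $\Delta_p w(x_0)\le 0$. Now I unwind the definition (\ref{Lap}):
\[
\Delta_{p} w(x_0)=\frac{1}{2\psi(x_0)}\sum_{y\sim x_0}\omega_{x_0 y}\Big(|\nabla w|^{p-2}(y)+|\nabla w|^{p-2}(x_0)\Big)\big(w(y)-w(x_0)\big).
\]
Because $w(x_0)=0$ and $w(y)\ge 0$, every factor $w(y)-w(x_0)=w(y)$ is nonnegative; the weights $\omega_{x_0y}>0$ and $\psi(x_0)>0$; and $|\nabla w|^{p-2}(y)+|\nabla w|^{p-2}(x_0)\ge 0$. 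Hence every summand is $\ge 0$, so $\Delta_p w(x_0)\ge 0$. Combined with $\Delta_p w(x_0)\le 0$ this forces $\Delta_p w(x_0)=0$, and since the sum is of nonnegative terms, each term must vanish: for every $y\sim x_0$,
\[
\omega_{x_0 y}\Big(|\nabla w|^{p-2}(y)+|\nabla w|^{p-2}(x_0)\Big)w(y)=0.
\]
The delicate point — the main obstacle — is to deduce $w(y)=0$ from this. If $|\nabla w|(x_0)>0$ then $|\nabla w|^{p-2}(x_0)>0$ (using $p\ge 2$ so the exponent $p-2\ge 0$; when $p=2$ the factor is the constant $1$, which is even cleaner), so the bracket is strictly positive and $w(y)=0$ follows immediately. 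The problematic case is $|\nabla w|(x_0)=0$: then $\sum_{z\sim x_0}\omega_{x_0 z}(w(z)-w(x_0))^2=0$, which already forces $w(z)=w(x_0)=0$ for all $z\sim x_0$ directly, so in fact in this branch all neighbors are zero without needing the bracket argument at all. Either way, $w(y)=0$ for every $y\sim x_0$.

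Thus the zero set $Z=\{x\in V: w(x)=0\}$ is nonempty (it contains $x_0$) and, by the argument just given applied at any of its points, is closed under taking neighbors, i.e. if $x\in Z$ and $y\sim x$ then $y\in Z$. Since $G$ is connected, any vertex is joined to $x_0$ by a finite path $x_0\sim x_1\sim\cdots\sim x_n$; inductively each $x_i\in Z$, so $Z=V$ and $w\equiv 0$. Applying this to $w=u$ and then to $w=v$ yields $u\equiv 0$ and $v\equiv 0$ on $V$, so $(u(x),v(x))\equiv(0,0)$ for all $x\in V$. I expect the only subtlety worth spelling out carefully in the write-up is the case split on whether $|\nabla w|(x_0)$ vanishes, since that is where the nonlinearity of $\Delta_p$ could in principle obstruct the propagation of the zero; everything else is a routine nonnegativity-of-a-finite-sum argument together with the connectedness induction.
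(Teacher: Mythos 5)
Your proof is correct and follows essentially the same route as the paper: evaluate the inequality at the zero vertex, use nonnegativity of each summand to force all neighboring values to vanish, and propagate by connectedness. In fact your explicit case split on whether $|\nabla w|(x_{0})=0$ (where for $p>2$ the factor $|\nabla w|^{p-2}(y)+|\nabla w|^{p-2}(x_{0})$ could vanish) is more careful than the paper, which passes over this degenerate case silently; your observation that $|\nabla w|(x_{0})=0$ already forces $w(y)=0$ for all $y\sim x_{0}$ closes that gap cleanly.
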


\begin{proof}
By (\ref{Lap}), since $u(x_{0})=0$, we have
\begin{align*}
-\Delta_{p}& u(x_{0})+ h_{1}(x_{0})|u(x_{0})|^{p-2}u(x_{0})\\ \notag
=&\frac{1}{2\psi(x_{0})}\sum_{y\sim x_{0}}\omega_{x_{0}y}\Big{(}|\nabla u|^{p-2}(y)+|\nabla u|^{p-2}(x_{0})\Big{)}\Big{(}u(x_{0})-u(y)\Big{)}\geq 0.
\end{align*}
Thus, we have
\begin{align*}
\frac{1}{2\psi(x_{0})}\sum_{y\sim x_{0}}\omega_{x_{0}y}\Big{(}|\nabla u|^{p-2}(y)+|\nabla u|^{p-2}(x_{0})\Big{)}u(y)\leq 0.
\end{align*}
Since $u(y)\geq 0$ for all $y\in V$ and $\omega_{yx_{0}} > 0$ for all $y \sim x_{0}$, we get
\begin{align*}
u(y)\equiv 0, ~for~ all~ y \sim x_{0}.
\end{align*}
Since $G$ is connected, repeat the above process and we have
\begin{align*}
u(y)\equiv 0, ~for~ all~ y\in V.
\end{align*}
Similarly, by (\ref{Lap}), we have
\begin{align*}
-\Delta_{q}& v(x_{*})+ h_{2}(x_{*})|v(x_{*})|^{q-2}v(x_{*})\\ \notag
=&\frac{1}{2\psi(x_{*})}\sum_{y\sim x_{*}}\omega_{x_{*}y}\Big{(}|\nabla v|^{q-2}(y)+|\nabla v|^{q-2}(x_{*})\Big{)}\Big{(}v(x_{*})-v(y)\Big{)}\geq 0.
\end{align*}
Thus, we have
\begin{align*}
\frac{1}{2\psi(x_{*})}\sum_{y\sim x_{*}}\omega_{x_{*}y}\Big{(}|\nabla v|^{q-2}(y)+|\nabla v|^{q-2}(x_{*})\Big{)}v(y)\leq 0.
\end{align*}
Since $v(y)\geq 0$ and $\omega_{x_{*}y} > 0$ for all $y \sim x_{*}$, we get
\begin{align*}
v(y)\equiv 0, ~for~ all~ y \sim x_{*}.
\end{align*}
Since $G$ is connected, repeat the above process and we have
\begin{align*}
v(y)\equiv 0, ~for~ all~ y\in V.
\end{align*}
Therefore, $\big{(}u(x),v(x)\big{)}\equiv (0,0)$ for all $x\in V$.
\end{proof}

Now, we introduce the following Palais-Smale condition (Definition \ref{PS}) and the Mountain pass lemma (Lemma \ref{mountain}) for elliptic systems on graphs.

\begin{definition} (\cite{YLL})  \label{PS}
Let $(X, ||\cdot ||)$ be a Banach space and $J(u,v) \in C^{1}(X,\mathbb{R})$. The functional $J(u,v)$ is said to satisfy the $(PS)_{c}$ condition if for any
sequence $\{(u_{k},v_{k})\}\subset X$ that satisfies $J(u_{k},v_{k})\leq c$ and $J'(u_{k},v_{k})\rightarrow 0$ as $k\rightarrow \infty$, $\{(u_{k},v_{k})\}$ has a strongly convergent subsequence.
\end{definition}

\begin{lemma}\label{mountain}
(Ambrosetti-Rabinowitz ~\cite{YLL}).
Let $(X, ||\cdot ||)$ be a Banach space and $J(u,v) \in C^{1}(X,\mathbb{R})$. There exist $(u_{0},v_{0}) \in X$ and $r > 0$
such that when $||(u_{0},v_{0})|| > r$ there holds
$$ b=\inf_{||(u,v)||=r}J(u,v)>J(0,0)>J(u_{0},v_{0}).$$
Moreover, if $J$ satisfies the $(PS)_{c}$ condition with
 $c=\inf_{\gamma\in\Gamma}\max_{t\in[0,1]}J(\gamma(t))$,
where $$\Gamma=\{\gamma\in C([0,1], X): \gamma(0)=(0,0),\gamma(1)=(u_{0},v_{0})\},$$
then c is a critical value of J.
\end{lemma}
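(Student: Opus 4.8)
The plan is a contradiction argument based on the deformation lemma. First I would record what the geometric hypothesis gives: since $\gamma(0)=(0,0)$ lies inside the sphere $S_r=\{(u,v)\in X:\|(u,v)\|=r\}$ and $\gamma(1)=(u_0,v_0)$ lies outside it (as $\|(u_0,v_0)\|>r$), every path $\gamma\in\Gamma$ meets $S_r$ at some parameter $t^\ast$, whence $\max_{t\in[0,1]}J(\gamma(t))\geq J(\gamma(t^\ast))\geq b$. Taking the infimum over $\Gamma$ gives
\begin{align*}
c=\inf_{\gamma\in\Gamma}\max_{t\in[0,1]}J(\gamma(t))\geq b>J(0,0)>J(u_0,v_0),
\end{align*}
so $c$ sits strictly above the values of $J$ at both endpoints of the admissible paths.

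Now suppose, for contradiction, that $c$ is not a critical value, i.e. $K_c=\{(u,v)\in X:J(u,v)=c,\ J'(u,v)=0\}=\emptyset$. Using the $(PS)_c$ condition I would first produce constants $\varepsilon_0,\delta>0$ such that $\|J'(u,v)\|\geq\delta$ on the slab $S=\{(u,v):|J(u,v)-c|\leq 2\varepsilon_0\}$; otherwise a sequence in $S$ along which $J'\to 0$ and $J\to c$ would, by $(PS)_c$, subconverge to a point of $K_c$. On $S$ one then constructs a locally Lipschitz pseudo-gradient vector field for $J$ and renormalizes it to a bounded, locally Lipschitz field $W$ with $\langle J'(u,v),W(u,v)\rangle$ bounded below by a positive constant throughout $S$; multiplying by a locally Lipschitz cut-off equal to $1$ on $\{|J-c|\leq\varepsilon_0\}$ and vanishing off $S$, one integrates the resulting bounded ODE to get a continuous flow $\eta:[0,1]\times X\to X$ with $\eta(0,\cdot)=\mathrm{id}$, with $\eta(s,\cdot)=\mathrm{id}$ off $S$ for all $s$, and, for a suitably small $0<\varepsilon<\varepsilon_0$, with $\eta(1,J^{c+\varepsilon})\subset J^{c-\varepsilon}$, where $J^{a}=\{(u,v):J(u,v)\leq a\}$. (In every use of this lemma below, $X$ is finite dimensional and one may instead flow directly along $-J'$; the pseudo-gradient construction is what makes the statement hold in a general Banach space.)

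Finally comes the path surgery. Shrink $\varepsilon_0$ if necessary so that $J(0,0),J(u_0,v_0)<c-2\varepsilon_0$, which is possible because $c>J(0,0)>J(u_0,v_0)$; in particular $(0,0)$ and $(u_0,v_0)$ lie off $S$, so $\eta(1,\cdot)$ fixes them. By definition of the infimum pick $\gamma\in\Gamma$ with $\max_{t\in[0,1]}J(\gamma(t))\leq c+\varepsilon$, i.e. $\gamma([0,1])\subset J^{c+\varepsilon}$, and set $\tilde\gamma(t)=\eta(1,\gamma(t))$. Then $\tilde\gamma$ is continuous with $\tilde\gamma(0)=(0,0)$ and $\tilde\gamma(1)=(u_0,v_0)$, so $\tilde\gamma\in\Gamma$, while $\tilde\gamma([0,1])\subset\eta(1,J^{c+\varepsilon})\subset J^{c-\varepsilon}$ forces $\max_{t\in[0,1]}J(\tilde\gamma(t))\leq c-\varepsilon<c$, contradicting the definition of $c$. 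Hence $K_c\neq\emptyset$ and $c$ is a critical value of $J$.

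I expect the technical core --- and the only genuine obstacle --- to be the deformation step: extracting from $(PS)_c$ the uniform lower bound $\|J'\|\geq\delta$ near level $c$, building a bounded locally Lipschitz pseudo-gradient flow on a general Banach space, and checking that along this flow $J$ decreases by a definite amount so that $\eta(1,\cdot)$ maps $J^{c+\varepsilon}$ into $J^{c-\varepsilon}$ without disturbing the endpoints. Everything after that is the short combinatorial argument above.
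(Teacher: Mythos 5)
The paper does not prove this lemma at all: it is quoted verbatim (up to some garbled phrasing of the geometric hypothesis) from Ambrosetti--Rabinowitz \cite{YLL}, so there is no in-paper argument to compare yours against. Your sketch is the standard and correct proof of that classical theorem: the observation that every admissible path must cross the sphere $\{\|(u,v)\|=r\}$ (so $c\geq b>J(0,0)>J(u_0,v_0)$), the contradiction hypothesis $K_c=\emptyset$, the extraction of a uniform lower bound on $\|J'\|$ in a slab around level $c$ from the Palais--Smale condition, the pseudo-gradient deformation $\eta(1,J^{c+\varepsilon})\subset J^{c-\varepsilon}$ fixing the endpoints, and the surgery on a near-optimal path are exactly the ingredients of the Ambrosetti--Rabinowitz argument, and you correctly identify the quantitative deformation lemma as the only genuinely technical step. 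Two small remarks. First, your derivation of the bound $\|J'\|\geq\delta$ on the slab uses $(PS)_c$ in its standard form (every sequence with $J\to c$ and $J'\to 0$ subconverges), whereas the paper's Definition 4.6 only requires this for sequences with $J(u_k,v_k)\leq c$; sequences approaching level $c$ from above are not literally covered by that definition. This is an imprecision in the paper's statement of $(PS)_c$ rather than a gap in your proof, but it is worth flagging since the deformation argument genuinely needs the two-sided version. Second, your parenthetical that in finite dimensions one may ``flow directly along $-J'$'' is slightly glib: $J\in C^1$ gives only a continuous gradient, so the ODE need not have unique solutions and one still wants a locally Lipschitz pseudo-gradient (or a Lipschitz regularization) even in finite dimensions. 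Neither point affects the correctness of the overall argument.
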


\section{Proof of the global existence results for the Toda system on graphs}
In this section, by using a direct variational method, Lemma \ref{LEMMA1}, Remark \ref{remark1} and Remark \ref{remark2},
we prove the global existence results for the Toda system on graphs (Theorem \ref{EEET23}-\ref{EEET238}).

\noindent $\mathbf{Proof~ of~ Theorem ~\ref{EEET23}.}$
Now, we define the functional $J_{1}:\mathcal{H}\rightarrow \mathbb{R}$ by
\begin{align} \label{NNN}
J_{1}(u,v)=&\frac{1}{2}\int_{V}|\nabla^{m} u|^{2}d\psi
+\frac{1}{2}\int_{V}|\nabla^{n} v|^{2}d\psi \notag \\
&-\frac{1}{2}\varphi_{1}log \int_{V}e^{2u-v}d\psi-\frac{1}{2}\varphi_{2}log \int_{V}e^{-u+2v}d\psi.
\end{align}
 Obviously, $J_{1}\in C^{1}\big{(}\mathcal{H},\mathbb{R}\big{)}$.

 By the Cauchy's inequality, (\ref{866.121212}), Remark \ref{remark1} and Lemma \ref{LEMMA1}, for any $\varepsilon>0$, we have
 \begin{align*}
u\leq& \varepsilon\int_{V}|\nabla^{m}u|^{2}d\psi+\frac{1}{4\varepsilon}\Big{(} \frac{u}{(\int_{V}|\nabla^{m}u|^{2}d\psi)^{\frac{1}{2}}}\Big{)}^{2}\\
\leq& \varepsilon\int_{V}|\nabla^{m}u|^{2}d\psi+C_{0},
\end{align*}
 and
\begin{align*}
-u\leq& \varepsilon\int_{V}|\nabla^{m}u|^{2}d\psi+\frac{1}{4\varepsilon}\Big{(} \frac{-u}{(\int_{V}|\nabla^{m}u|^{2}d\psi)^{\frac{1}{2}}}\Big{)}^{2}\\
\leq& \varepsilon\int_{V}|\nabla^{m}u|^{2}d\psi+C_{0},
\end{align*}
 where $C_{0}$ is a constant. Thus, we get
\begin{align*}
log \int_{V}e^{2u-v}d\psi\leq 3C_{0}+2\varepsilon||u||_{W^{m,2}(V)}^{2}+\varepsilon||v||_{W^{n,2}(V)}^{2},\\
log \int_{V}e^{-u+2v}d\psi\leq 3C_{0}+\varepsilon||u||_{W^{m,2}(V)}^{2}+2\varepsilon||v||_{W^{n,2}(V)}^{2}.
\end{align*}
Therefore, we have
 \begin{align}\label{to1}
J_{1}(u,v)\geq &\Big{(} \frac{1}{2}-(\varphi_{1}+\frac{1}{2}\varphi_{2})\varepsilon \Big{)}||u||_{W^{m,2}(V)}^{2}+\Big{(} \frac{1}{2}-(\frac{1}{2}\varphi_{1}+\varphi_{2})\varepsilon \Big{)}||v||_{W^{n,2}(V)}^{2}\notag\\
&-\frac{3}{2}C_{0}(\varphi_{1}+\varphi_{2})
\end{align}
Choosing $\varepsilon$ such that $0<\varepsilon<\min\{\frac{1}{2\varphi_{1}+\varphi_{2}},\frac{1}{\varphi_{1}+2\varphi_{2}}\}$, we get
\begin{align*}
J_{1}(u,v)\geq -\frac{3}{2}C_{0}(\varphi_{1}+\varphi_{2}).
\end{align*}
Therefore, $J_{1}(u,v)$ is bounded from below in $\mathcal{H}$. So there exists a sequence $\{(u_{n},v_{n})\}\subset \mathcal{H}$ such that
\begin{align}\label{to2}
\lim_{n\rightarrow \infty} J_{1}(u_{n},v_{n})=\inf_{\mathcal{H}}J_{1}(u,v).
\end{align}
On the other hand,
\begin{align}\label{to3}
 J_{1}(0,0)=&-\frac{1}{2}\varphi_{1}log \int_{V}1d\psi-\frac{1}{2}\varphi_{2}log \int_{V}1d\psi \notag\\
\leq &-\frac{1}{2}\varphi_{1}log \psi_{0}-\frac{1}{2}\varphi_{2}log \psi_{0}.
\end{align}
By (\ref{to1}),(\ref{to2}) and (\ref{to3}), when $n$ is sufficiently large, there exists  $\varepsilon>0$ such that
\begin{align}\label{to5}
 -\frac{3}{2}C_{0}(\varphi_{1}+\varphi_{2})\leq J_{1}(u_{n},v_{n})&\leq \inf_{\mathcal{H}}J_{1}(u,v)+\varepsilon \leq J_{1}(0,0)+\varepsilon\notag\\
 &\leq -\frac{1}{2}\varphi_{1}log \psi_{0}-\frac{1}{2}\varphi_{2}log \psi_{0}+\varepsilon.
\end{align}
By (\ref{to1}), (\ref{to5}) and Lemma \ref{LEMMA1}, we get that $\{(u_{n},v_{n})\}$ is bounded in $\mathcal{H}$.
Therefore, up to a subsequence, still denoted by $\{(u_{n},v_{n})\}$, there exists $(u_{0},v_{0})$ such that
\begin{align}\label{to6}
(u_{n},v_{n})\rightarrow (u_{0},v_{0})~~as~~n\rightarrow\infty.
\end{align}
Setting
\begin{align*}
-\Delta u_{n}=\lambda_{1}u_{n}~~ and -\Delta v_{n}=\lambda_{1}v_{n}
\end{align*}
and letting $n\rightarrow \infty$, as $(u_{n},v_{n})\rightarrow (u_{0},v_{0})$, we get
\begin{align*}
-\Delta u_{0}=\lambda_{1}u_{0}~~ and -\Delta v_{0}=\lambda_{1}v_{0}.
\end{align*}
Thus, we get
\begin{align} \label{to338}
(u_{0},v_{0})\in \mathcal{H}
\end{align}

Calculating directly, we have
\begin{align}\label{to35}
&\Big{|}\int_{V}(e^{2u_{n}-v_{n}}-e^{2u_{0}-v_{0}})d\psi\Big{|} \notag \\
=&\Big{|}\int_{V}d\psi\int^{1}_{0}\frac{d}{dt}e^{t\big{(}(2u_{n}-v_{n})-(2u_{0}-v_{0})\big{)}+(2u_{0}-v_{0})}dt\Big{|}\notag \\
=&\Big{|}\int^{1}_{0}dt\int_{V}e^{t\big{(}(2u_{n}-v_{n})-(2u_{0}-v_{0})\big{)}+(2u_{0}-v_{0})}
\cdot\big{(}(2u_{n}-v_{n})-(2u_{0}-v_{0})\big{)}d\psi\Big{|}\notag \\
\leq&\Big{|}\int^{1}_{0}\Big{(}\int_{V}e^{2t\big{(}(2u_{n}-v_{n})-2(2u_{0}-v_{0})\big{)}+(2u_{0}-v_{0})}d\psi\Big{)}^{\frac{1}{2}}
\cdot\Big{(}\int_{V}\big{(}(2u_{n}-v_{n})\notag \\
&-(2u_{0}-v_{0})\big{)}^{2}d\psi\Big{)}^{\frac{1}{2}}dt\Big{|}\notag \\
\leq& C \Big{(}\int_{V}\big{(}(2u_{n}-v_{n})-(2u_{0}-v_{0})\big{)}^{2}d\psi\Big{)}^{\frac{1}{2}}\rightarrow 0,~~as~~n\rightarrow\infty,
\end{align}
where C is a positive constant. Similarly, we have
\begin{align}\label{to36}
\Big{(}\int_{V}\big{(}(-u_{n}+2v_{n})-(-u_{0}+2v_{0})\big{)}^{2}d\psi\Big{)}^{\frac{1}{2}}\rightarrow 0,~~as~~n\rightarrow\infty.
\end{align}
 Therefore, by (\ref{to6}),(\ref{to338}), (\ref{to35}) and (\ref{to36}), one obtains
\begin{align*}
J_{1}(u_{0},v_{0})=\lim_{n\rightarrow \infty} J_{1}(u_{n},v_{n})=\inf_{\mathcal{H}}J_{1}(u,v).
\end{align*}
By a direct variational method, we get that $(u_{0},v_{0})\in \mathcal{H}$ satisfies the equation ($\ref{EEERRR}$).
Thus, the proof is completed.  ~ ~$\Box$ \\

\noindent $\mathbf{Proof~ of~ Theorem ~\ref{EEET238}.}$
Set $\mathcal{H}_{k}=\mathcal{H}_{0}^{m,2}(V_{k})\times \mathcal{H}_{0}^{n,2}(V_{k})$ with the norm
\begin{align*}
||(u,v)||_{\mathcal{H}_{k}}=\max\big{\{} ||u||_{\mathcal{H}^{m,2}_{0}(V_{k})}, ||v||_{\mathcal{H}^{n,2}_{0}(V_{k})}\big{\}}.
\end{align*}
Define the functional $J_{k(2)}:\mathcal{H}_{k}\rightarrow \mathbb{R}$ by
\begin{align} \label{to21}
J_{k(2)}(u,v)=&\frac{1}{2}\int_{V_{k}}|\nabla^{m} u|^{2}d\psi
+\frac{1}{2}\int_{V_{k}}|\nabla^{n} v|^{2}d\psi \notag \\
&-\frac{1}{2}\varphi_{1}log \int_{V_{k}}e^{2u-v}d\psi-\frac{1}{2}\varphi_{2}log \int_{V_{k}}e^{-u+2v}d\psi.
\end{align}
 Obviously, $J_{k(2)}\in C^{1}\big{(}\mathcal{H}_{k},\mathbb{R}\big{)}$.

By Remark \ref{remark1}, the norm
\begin{align*}
||u||_{\mathcal{H}^{m,2}_{0}(V_{k})}=\Big{(}\int_{V_{k}}(|\nabla^{m}u|^{2}+|u|^{2})d\psi\Big{)}^{\frac{1}{2}}
\end{align*}
is equivalent to the norm
\begin{align*}
||u||_{\mathcal{H}^{m,2}_{0}(V_{k})}=\Big{(}\int_{V_{k}}|\nabla^{m}u|^{2}d\psi\Big{)}^{\frac{1}{2}}.
\end{align*}
Therefore, by Lemma \ref{LEMMA1}, we get
\begin{align*}
||u(x)||_{L^{q}(V_{k})}\leq C^{*}||u||_{\mathcal{H}_{0}^{m,2}(V_{k})}, ~\forall u\in \mathcal{H}_{0}^{m,2}(V_{k}).
\end{align*}
Just as in the proof of Theorem \ref{EEET23},
we can get that there exist a sequence $\{(\breve{u}_{n},\breve{v}_{n})\}\subset \mathcal{H}_{k}$ and $\varepsilon>0$ such that
\begin{align}\label{to23}
 -\frac{3}{2}C_{0}(\varphi_{1}+\varphi_{2})\leq J_{k(2)}(\breve{u}_{n},\breve{v}_{n})&\leq \inf_{\mathcal{H}_{k}}J_{k(2)}(u,v)+\varepsilon \leq J_{k(2)}(0,0)+\varepsilon\notag\\
 &\leq -\frac{1}{2}\varphi_{1}log \psi_{0}-\frac{1}{2}\varphi_{2}log \psi_{0}+\varepsilon
\end{align}
when $n$ is sufficiently large.
Therefore, we get that $\{(\breve{u}_{n},\breve{v}_{n})\}$ is bounded in $\mathcal{H}_{k}$.
Thus, up to a subsequence, still denoted by $\{(\breve{u}_{n},\breve{v}_{n})\}$, there exists $(u_{k},v_{k})\in \mathcal{H}_{k}$ such that
\begin{align}\label{to26}
(\breve{u}_{n},\breve{v}_{n})\rightarrow (u_{k},v_{k})~~ in~~\mathcal{H}_{k}~~as~~n\rightarrow\infty.
\end{align}
 Therefore, one obtains
\begin{align}\label{to212}
J_{k(2)}(u_{k},v_{k})=\lim_{n\rightarrow \infty} J_{k(2)}(\breve{u}_{n},\breve{v}_{n})=\inf_{\mathcal{H}_{k}}J_{k(2)}(u,v).
\end{align}
By a direct variational method, we get that $(u_{k},v_{k})$ satisfies the Euler-Lagrange equation
\begin{equation}\label{toEEERRR}
\left \{
\begin{array}{lcr}
 \mathcal{L}_{m,2} u_{k}=\varphi_{1}(\frac{e^{2u_{k}-v_{k}}}{\int_{V}e^{2u_{k}-v_{k}}d\psi}-1) ~ &{\rm in}& V_{k},\\
\mathcal{L}_{n,2} v_{k}=\varphi_{2}(\frac{e^{-u_{k}+2v_{k}}}{\int_{V}e^{-u_{k}+2v_{k}}d\psi}-1)~  &{\rm in}& V_{k},\\
u_{k}=v_{k}=0 &{\rm on}& \partial V_{k}.
 \end{array}
\right.
\end{equation}

Let $\Theta$ denote any finite set of $V$. By (\ref{to23}) and (\ref{to212}), we get
\begin{align}\label{to213}
||u_{k}||^{2}_{W_{0}^{m,2}(V_{k})} \leq M_{1} ~~and~~ ||v_{k}||^{2}_{W_{0}^{n,2}(V_{k})}\leq M_{1},
\end{align}
where $M_{1}$ is a positive constant independent of $k$.
When $k$ is large enough, we can get that $\Theta\subset V_{k}$. By (\ref{LEMMA1}), (\ref{to213}), Lemma \ref{LEMMA1} and Remark \ref{remark2}, we have
\begin{align*}
||u_{k}||_{L^{\infty}(\Theta)} \leq M_{2} ~~and ~~ ||v_{k}||_{L^{\infty}(\Theta)}\leq M_{2},
\end{align*}
where $M_{2}$ is a positive constant independent of $k$.
Thus, $\{(u_{k},v_{k} )\}$ is uniformly bounded in $\Theta$.
Therefore, we have that $\{(u_{k},v_{k} )\}$ is uniformly bounded in $V_{1}$. Thus, there exists a subsequence of $\{(u_{k},v_{k} )\}$,
denoted by $\{(u_{1k},v_{1k} )\}$, and functions $(u^{*}_{1},v^{*}_{1} )$ such that $(u_{1k},v_{1k} )\rightarrow (u^{*}_{1},v^{*}_{1} )$ in $V_{1}$.
 Again, $\{(u_{1k},v_{1k} )\}$ is uniformly bounded in $V_{2}$. Then there exists a subsequence of $\{(u_{1k},v_{1k} )\}$,
denoted by $\{(u_{2k},v_{2k} )\}$, and functions $(u^{*}_{2},v^{*}_{2} )$ such that $(u_{2k},v_{2k} )\rightarrow (u^{*}_{2},v^{*}_{2} )$ in $V_{2}$.
 Obviously, $(u^{*}_{1},v^{*}_{1} )=(u^{*}_{2},v^{*}_{2})$ in $V_{1}$. Repeating this process, we can find a diagonal subsequence $\{(u_{kk},v_{kk} )\}$, which is still denoted by $\{(u_{k},v_{k} )\}$, and functions $(u^{*},v^{*}):V\times V\rightarrow \mathbb{R}$ such that for any finite set $\Theta\subset V$,
$(u_{k},v_{k})\rightarrow (u^{*},v^{*})$ in $\Theta$. For any fixed $x \in V$, calculating similarly as (\ref{to35}) and letting $k \rightarrow \infty$ in (\ref{toEEERRR}), we obtain
\begin{equation*}
\left \{
\begin{array}{lcr}
 \mathcal{L}_{m,2} u^{*}=\varphi_{1}(\frac{e^{2u^{*}-v^{*}}}{\int_{V}e^{2u^{*}-v^{*}}d\psi}-1) ~ &{\rm in}& V,\\
\mathcal{L}_{n,2} v^{*}=\varphi_{2}(\frac{e^{-u^{*}+2v^{*}}}{\int_{V}e^{-u^{*}+2v^{*}}d\psi}-1)~  &{\rm in}& V.\\
 \end{array}
\right.
\end{equation*}
Noting that when $k\rightarrow \infty$ there hold
$-\Delta u^{*}=\lambda_{1}u^{*}$ and $-\Delta v^{*}=\lambda_{1}v^{*}$ as $(u_{k},v_{k})\rightarrow (u^{*},v^{*})$,
we get that $(u^{*},v^{*})\in \mathcal{H}$.
Therefore, $(u^{*},v^{*})\in \mathcal{H}$ is a solution to the problem (\ref{EEERRR}).
Thus, the proof is completed.  ~ ~$\Box$

\section{Proof of the Local existence results on locally finite graphs}

In this section, by using the mountain pass theorem and some Lemmas in section \ref{Preliminary}, we prove the Local existence results (Theorem \ref{T1}-\ref{T13}) on locally finite graphs.

\noindent $\mathbf{Proof~ of~ Theorem ~\ref{T1}.}$
Now, we define the functional $J_{3}:\mathbb{H}\rightarrow \mathbb{R}$ by
\begin{align*}
J_{3}(u,v)=\frac{1}{2}||(u,v)||^{2}_{\mathbb{H}}-\frac{1}{p+q}\int_{\Omega}|u|^{p}|v|^{q}d\psi.
\end{align*}
Firstly, we prove that $J_{3}(u,v)$ satisfies the $(PS)_{c}$ condition for any $c\in \mathbb{R}$.
To see this, we take a sequence $\{(u_{k},v_{k})\}$ in $\mathbb{H}$ such that $J_{3}(u_{k},v_{k})\rightarrow c$ and $J_{3}'(u_{k},v_{k})\rightarrow 0$
as $k\rightarrow +\infty$. Thus, we have
\begin{align}
\frac{1}{2}||(u_{k},v_{k})||^{2}_{\mathbb{H}}-\frac{1}{p+q}\int_{\Omega}|u_{k}|^{p}|v_{k}|^{q}d\psi\leq c+o_{k}(1)||(u_{k},v_{k})||_{\mathbb{H}},\label{3.11}\\
||(u_{k},v_{k})||^{2}_{\mathbb{H}}-\int_{\Omega}|u_{k}|^{p}|v_{k}|^{q}d\psi =o_{k}(1)||(u_{k},v_{k})||_{\mathbb{H}}.\label{3.12}
\end{align}
By (\ref{3.11}) and (\ref{3.12}), we have
\begin{align} \label{3.13}
(\frac{1}{2}-\frac{1}{p+q})||(u_{k},v_{k})||^{2}_{\mathbb{H}}\leq M,
\end{align}
where $M$ is a positive constant.
Since $p+q>2$, by (\ref{3.13}) we get that $\{(u_{k},v_{k})\}$ is bounded in $\mathbb{H}$.
By Lemma \ref{qianru} and Remark \ref{remark2}, since $W_{0}^{1,2}(\Omega)$ is pre-compact and $\{(u_{k},v_{k})\}$ is a bounded sequence, we get that $(u_{k},v_{k})\rightarrow (u,v)$ strongly in $\mathbb{H}$. Therefore, $J_{3}(u,v)$ satisfies the $(PS)_{c}$ condition for any $c\in \mathbb{R}$.

Now, we prove that $J_{3}(u,v)$ satisfies all the conditions in the mountain pass theorem in Lemma \ref{mountain}.
Obviously, we have
\begin{align} \label{001}
J_{3}(0,0)=0.
\end{align}
From \cite{Huei}, we can set
\begin{align} \label{canshu1}
S_{1}=\inf_{(u,v)\in \mathbb{H}\backslash \{(0,0)\}}\frac{||(u,v)||^{2}_{\mathbb{H}}}{(\int_{\Omega}|u|^{p}|v|^{q}d\psi)^{\frac{2}{p+q}}}.
\end{align}
By (\ref{canshu1}), for all $(u,v)\in \mathbb{H}$, we get
\begin{align} \label{3.14}
J_{3}(u,v)=&\frac{1}{2}||(u,v)||^{2}_{\mathbb{H}}-\frac{1}{p+q}\int_{\Omega}|u|^{p}|v|^{q}d\psi\notag\\
\geq &\frac{1}{2}||(u,v)||^{2}_{\mathbb{H}}-\frac{1}{p+q}S^{-\frac{p+q}{2}}_{1}||(u,v)||^{p+q}_{\mathbb{H}}.
\end{align}
Since $p+q>2$, by (\ref{3.14}) we can find some sufficiently small $r>0$ such that
\begin{align} \label{3.15}
 \inf_{||(u,v)||_{\mathbb{H}}=r} J_{3}(u,v)>0.
\end{align}
On the other hand, for any $(u,v)\in \mathbb{H}\backslash \{(0,0)\}$, we have
\begin{align} \label{3.16}
J_{3}(tu,tv)=\frac{t^{2}}{2}||(u,v)||^{2}_{\mathbb{H}}-\frac{|t|^{p+q}}{p+q}\int_{\Omega}|u|^{p}|v|^{q}d\psi .
\end{align}
Passing to the limit $t\rightarrow +\infty$,  by (\ref{3.16}) we get
\begin{align} \label{3.17}
J_{3}(tu,tv) \rightarrow -\infty.
\end{align}
Hence there exists some $(u_{0},v_{0})\in \mathbb{H}\backslash \{(0,0)\}$ such that
\begin{align} \label{3.18}
J_{3}(u_{0},v_{0}) <0, ~~when~||(u_{0},v_{0})||_{\mathbb{H}}>r.
\end{align}
By (\ref{001}), (\ref{3.15}) and (\ref{3.18}), we conclude by Lemma \ref{mountain} that
$$c=\inf_{\gamma\in\Gamma}\max_{t\in[0,1]}J_{3}(\gamma(t))$$ is a critical value of $J_{3}$, where
$$\Gamma=\{\gamma\in C([0,1], \mathbb{H}): \gamma(0)=(0,0),\gamma(1)=(u_{0},v_{0})\}.$$
Therefore, there exists some
\begin{align} \label{co001}
(u,v)\in \mathbb{H}\backslash \{(0,0)\}
\end{align}
satisfying
\begin{equation}\label{AE1}
\left \{
\begin{array}{lcr}
 -\Delta u=\frac{p}{p+q}|u|^{p-2}u|v|^{q} &{\rm in}&\Omega^{\circ}, \\
-\Delta v=\frac{q}{p+q}|u|^{p}|v|^{q-2}v &{\rm in}&\Omega^{\circ},\\
u=v=0 &{\rm on}&\partial\Omega.
 \end{array}
\right.
\end{equation}
Since $J_{3}(u,v)=J_{3}(|u|,|v|)$, we may assume that $u\geq 0$ and $v\geq 0$. Thus, by (\ref{AE1}), we get $-\Delta u\geq 0 $ and $-\Delta v\geq 0 $.
Suppose that there exist a point $x_{0}\in \Omega^{\circ}$ such that $u(x_{0})=0=\min_{x\in \Omega^{\circ}}u(x)$, and a point $x_{*}\in \Omega^{\circ}$ such that $v(x_{*})=0=\min_{x\in \Omega^{\circ}}v(x)$.
Then, by Lemma \ref{zhunze1}, we get that $(u,v)\equiv(0,0)$, which is a contradiction with (\ref{co001}).
Therefore, $u> 0, ~v> 0$ for all $x\in \Omega^{\circ}$, and the proof is completed.  ~ ~$\Box$ \\

\noindent $\mathbf{Proof~ of~ Theorem ~\ref{T2}.}$
We define the functional $J_{4}:\mathbb{W}\rightarrow \mathbb{R}$ by
\begin{align*}
J_{4}(u,v)=\frac{\alpha+1}{p}\int_{\Omega}|\nabla u|^{p}d\psi+\frac{\beta+1}{q}\int_{\Omega}|\nabla v|^{q}d\psi - \lambda_{0}\int_{\Omega}|u|^{\alpha+1}|v|^{\beta+1}d\psi.
\end{align*}
Firstly, we prove that $J_{4}(u,v)$ satisfies the $(PS)_{c}$ condition for any $c\in \mathbb{R}$.
To see this, we take a sequence $\{(u_{k},v_{k})\}$ in $\mathbb{W}$ such that as $k\rightarrow +\infty$ we have
\begin{align}
J_{4}(u_{k},v_{k})\rightarrow c,\label{y.2}\\
J_{4}'(u_{k},v_{k})\rightarrow 0.\label{y.3}
\end{align}
 By (\ref{y.2}), we have
\begin{align} \label{y.4}
\frac{\alpha+1}{p}\int_{\Omega}|\nabla u_{k}|^{p}d\psi+\frac{\beta+1}{q}\int_{\Omega}|\nabla v_{k}|^{q}d\psi - \lambda_{0}\int_{\Omega}|u_{k}|^{\alpha+1}|v_{k}|^{\beta+1}d\psi= c+o_{k}(1).
\end{align}
By (\ref{y.3}), we have
\begin{align}
\int_{\Omega}|\nabla u_{k}|^{p}d\psi-\lambda_{0}\int_{\Omega}|u_{k}|^{\alpha+1}|v_{k}|^{\beta+1}d\psi= o_{k}(1)||u||_{W^{1,p}_{0}(\Omega)},\label{y.5}\\
\int_{\Omega}|\nabla v_{k}|^{q}d\psi-\lambda_{0}\int_{\Omega}|u_{k}|^{\alpha+1}|v_{k}|^{\beta+1}d\psi= o_{k}(1)||v||_{W^{1,q}_{0}(\Omega)}.\label{y.6}
\end{align}
Letting $(\ref{y.4})-(\ref{y.5})\times \frac{\alpha+1}{\gamma_{1}}-(\ref{y.6})\times \frac{\beta+1}{\gamma_{2}}$,
where
\begin{align}
\frac{\gamma_{1}}{\alpha+1}>1, \frac{\gamma_{2}}{\beta+1}>1,\label{y.111}\\
\frac{\alpha+1}{\gamma_{1}}+\frac{\beta+1}{\gamma_{2}}=1,\label{y.11r}
\end{align}
 we have
\begin{align}\label{y.11111r}
(\alpha+1)(\frac{1}{p}-\frac{1}{\gamma_{1}}) ||u_{k}||^{p}_{W^{1,p}_{0}(\Omega)}+ (\beta+1)(\frac{1}{q}-\frac{1}{\gamma_{2}}) ||v_{k}||^{q}_{W^{1,q}_{0}(\Omega)}\leq M ,
\end{align}
 where $M $ is a positive constant.
Since $\alpha+1>p$ and $\beta+1>q$, by (\ref{y.111}) and (\ref{y.11r}) we get that
$\gamma_{1}>p$ and $\gamma_{2}>q$. Thus, by (\ref{y.11111r}) we get that $||u_{k}||_{W^{1,p}_{0}(\Omega)}\leq M_{1}$ and $||v_{k}||_{W^{1,q}_{0}(\Omega)}\leq M_{1}$,
where $M_{1}$ is a positive constant.
By Lemma \ref{qianru} and Remark \ref{remark2}, since $W_{0}^{1,p}(\Omega)$ and $W_{0}^{1,q}(\Omega)$ are pre-compact and $\{(u_{k},v_{k})\}$ is a bounded sequence, we get that $(u_{k},v_{k})\rightarrow (u,v)$ strongly in $\mathbb{W}$. Therefore, $J_{4}(u,v)$ satisfies the $(PS)_{c}$ condition for any $c\in \mathbb{R}$.

Now, we prove that $J_{4}(u,v)$ satisfies all the conditions in the mountain pass theorem in Lemma \ref{mountain}.
Obviously, we have
\begin{align} \label{002}
J_{4}(0,0)=0.
\end{align}
By the Young inequality and the Sobolev embedding in Lemma \ref{qianru}, we get
\begin{align*}
\int_{\Omega}|u|^{\alpha+1}|v|^{\beta+1}d\psi&\leq \frac{\alpha+1}{\gamma_{1}}\int_{\Omega}|u|^{\gamma_{1}}d\psi+\frac{\beta+1}{\gamma_{2}}\int_{\Omega}|v|^{\gamma_{2}}d\psi\\
&\leq \frac{\alpha+1}{\gamma_{1}}C^{\gamma_{1}}_{1}||u||^{\gamma_{1}}_{W^{1,p}_{0}(\Omega)}+\frac{\beta+1}{\gamma_{2}}C^{\gamma_{2}}_{2}||v||^{\gamma_{2}}_{W^{1,q}_{0}(\Omega)},
\end{align*}
where $\alpha,\beta,\gamma_{1}$ and $\gamma_{2}$ satisfy (\ref{y.111}) and (\ref{y.11r}).
Hence, for all $(u,v)\in \mathbb{W}$, we get
\begin{align*}
J_{4}(u,v)\geq&\frac{\alpha+1}{p}||u||^{p}_{W^{1,p}_{0}(\Omega)}+\frac{\beta+1}{q}||v||^{q}_{W^{1,q}_{0}(\Omega)} \notag\\
&-\lambda_{0}\frac{\alpha+1}{\gamma_{1}}C^{\gamma_{1}}_{1}||u||^{\gamma_{1}}_{W^{1,p}_{0}(\Omega)}
-\lambda_{0}\frac{\beta+1}{\gamma_{2}}C^{\gamma_{2}}_{2}||v||^{\gamma_{2}}_{W^{1,q}_{0}(\Omega)}.
\end{align*}
Since $\gamma_{1}>p$ and $\gamma_{2}>q$, we can find some sufficiently small $r>0$ such that
\begin{align} \label{y.10}
 \inf_{||(u,v)||_{\mathbb{W}}=r} J_{4}(u,v)>0.
\end{align}
On the other hand, for any $(u,v)\in \mathbb{W}\backslash \{(0,0)\}$, we have
\begin{align*}
J_{4}(tu,tv)=\frac{\alpha+1}{p} |t|^{p}||u||^{p}_{W^{1,p}_{0}(\Omega)}+\frac{\beta+1}{q}|t|^{q}||v||^{q}_{W^{1,q}_{0}(\Omega)} - \lambda_{0} |t|^{\alpha+\beta+2}\int_{\Omega}|u|^{\alpha+1}|v|^{\beta+1}d\psi.
\end{align*}
Passing to the limit $t\rightarrow +\infty$, since $\alpha+1>p$ and $\beta+1>q$, we get
\begin{align*}
J_{4}(tu,tv) \rightarrow -\infty.
\end{align*}
Hence there exists some $(u_{0},v_{0})\in \mathbb{W}\backslash \{(0,0)\}$ such that
\begin{align} \label{y.13}
J_{4}(u_{0},v_{0}) <0,~when~||(u_{0},v_{0})||_{\mathbb{W}}>r.
\end{align}
By (\ref{002}), (\ref{y.10}) and (\ref{y.13}), we conclude by Lemma \ref{mountain} that
$$c=\inf_{\gamma\in\Gamma}\max_{t\in[0,1]}J_{4}(\gamma(t))$$ is a critical value of $J_{4}$, where
$$\Gamma=\{\gamma\in C([0,1], \mathbb{W}): \gamma(0)=(0,0),\gamma(1)=(u_{0},v_{0})\}.$$
Therefore, there exists some
\begin{align} \label{co2}
(u,v)\in \mathbb{W}\backslash \{(0,0)\}
\end{align}
satisfying
\begin{equation}\label{AE2}
\left \{
\begin{array}{lcr}
 -\Delta_{p} u=\lambda_{0}|u|^{\alpha-1}u|v|^{\beta+1} &{\rm in}&\Omega^{\circ},\\
-\Delta_{q} v=\lambda_{0}|u|^{\alpha+1}|v|^{\beta-1}v &{\rm in}&\Omega^{\circ},\\
u=v=0 &{\rm on}&\partial\Omega.
 \end{array}
\right.
\end{equation}
Since $J_{4}(u,v)=J_{4}(|u|,|v|)$, we may assume that $u\geq 0$ and $v\geq 0$.
Thus, by (\ref{AE2}), we get $-\Delta_{p} u\geq 0 $ and $-\Delta_{q} v\geq 0 $.
Suppose that there exist a point $x_{0}\in \Omega^{\circ}$ such that $u(x_{0})=0=\min_{x\in \Omega^{\circ}}u(x)$, and a point $x_{*}\in \Omega^{\circ}$ such that $v(x_{*})=0=\min_{x\in \Omega^{\circ}}v(x)$.
Then, by Lemma \ref{zhunze1}, we get that $(u,v)\equiv(0,0)$, which is a contradiction with (\ref{co2}).
Therefore, $u> 0, ~v> 0$ for all $x\in \Omega^{\circ}$, and the proof is completed.  ~ ~$\Box$ \\

\noindent $\mathbf{Proof~ of~ Theorem ~\ref{T13}.}$
We define the functional $J_{5}:\mathcal{Z}\rightarrow R$ by
\begin{align}\label{aaa.2}
J_{5}(u,v)=&\frac{1}{p}\int_{\Omega}|\nabla^{m} u|^{p}d\psi+\frac{1}{q}\int_{\Omega}|\nabla^{n} v|^{q}d\psi
- \frac{\lambda}{p}\int_{\Omega}\omega(x)|u|^{p} d\psi-\frac{\vartheta}{q}\int_{\Omega}\psi(x)|v|^{q} d\psi\notag \\
&-\frac{1}{\alpha+\beta+2}\int_{\Omega}|u|^{\alpha+1}|v|^{\beta+1}d\psi.
\end{align}
Firstly, we prove that $J_{5}(u,v)$ satisfies the $(PS)_{c}$ condition for any $c\in \mathbb{R}$.
To see this, we take a sequence $\{(u_{k},v_{k})\}$ in $\mathcal{Z}$ such that as $k\rightarrow +\infty$ we have
\begin{align}
J_{5}(u_{k},v_{k})\rightarrow c, \label{c.2} \\
J_{5}'(u_{k},v_{k})\rightarrow 0.\label{c.3}
\end{align}
By (\ref{canshu111}), (\ref{canshu122}) and (\ref{c.2}), we have
\begin{align} \label{c.5}
\frac{1}{p}(1-\frac{\lambda}{\lambda_{1}})\int_{\Omega}|\nabla^{m} u_{k}|^{p}d\psi+
\frac{1}{q}(1-\frac{\vartheta}{\vartheta_{1}})\int_{\Omega}|\nabla^{n} v_{k}|^{q}d\psi \notag \\
- \frac{1}{\alpha+\beta+2}\int_{\Omega}|u_{k}|^{\alpha+1}|v_{k}|^{\beta+1}d\psi \leq c+o_{k}(1).
\end{align}
By (\ref{c.3}), we have
\begin{align} \label{c.6}
(1-\frac{\lambda}{\lambda_{1}})\int_{\Omega}|\nabla^{m} u_{k}|^{p}d\psi+(1-\frac{\vartheta}{\vartheta_{1}})\int_{\Omega}|\nabla^{n} v_{k}|^{q}d\psi-\int_{\Omega}|u_{k}|^{\alpha+1}|v_{k}|^{\beta+1}d\psi
\leq o_{k}(1).
\end{align}
By (\ref{c.5}) and (\ref{c.6}), we have
\begin{align} \label{c.8}
  (1-\frac{\lambda}{\lambda_{1}}) (\frac{1}{p}-\frac{1}{\alpha+\beta+2}) ||u_{k}||^{p}_{W_{0}^{m,p}(\Omega)}+ (1-\frac{\vartheta}{\vartheta_{1}})(\frac{1}{q}-\frac{1}{\alpha+\beta+2}) ||v_{k}||^{q}_{W_{0}^{n,q}(\Omega)}\leq M,
\end{align}
 where $M$ is a positive constant.
Since $\lambda<\lambda_{1}, \mu<\mu_{1}$ and $\alpha+\beta+2>p,q$, by (\ref{c.8})
we get that $||u_{k}||_{W_{0}^{m,p}(\Omega)}\leq M_{1}$ and $||v_{k}||_{W_{0}^{n,q}(\Omega)}\leq M_{1}$,
where $M_{1}$ is a positive constant.
By Lemma \ref{qianru} and Remark \ref{remark2}, since $W_{0}^{m,p}(\Omega)$ and $W_{0}^{n,q}(\Omega)$ are pre-compact and $\{(u_{k},v_{k})\}$ is a bounded sequence, we get that $(u_{k},v_{k})\rightarrow (u,v)$ strongly in $\mathcal{Z}$. Therefore, $J_{5}(u,v)$ satisfies the $(PS)_{c}$ condition for any $c\in \mathbb{R}$.

Now, we prove that $J_{5}(u,v)$ satisfies all the conditions in the mountain pass theorem in Lemma \ref{mountain}.
Obviously, we have
\begin{align} \label{003}
J_{5}(0,0)=0.
\end{align}
By the Young inequality and the Sobolev embedding in Lemma \ref{qianru}, we get
\begin{align} \label{c.9}
\int_{\Omega}|u|^{\alpha+1}|v|^{\beta+1}d\psi&\leq \frac{\alpha+1}{\gamma_{1}}\int_{\Omega}|u|^{\gamma_{1}}d\psi+\frac{\beta+1}{\gamma_{2}}\int_{\Omega}|v|^{\gamma_{2}}d\psi \notag\\
&\leq\frac{\alpha+1}{\gamma_{1}}C^{\gamma_{1}}_{3}||u||^{\gamma_{1}}_{W_{0}^{m,p}(\Omega)}
+\frac{\beta+1}{\gamma_{2}}C^{\gamma_{2}}_{4}||v||^{\gamma_{2}}_{W_{0}^{n,q}(\Omega)},
\end{align}
where $\alpha,\beta,\gamma_{1}$ and $\gamma_{2}$ satisfy the following two conditions
\begin{align}
\frac{\gamma_{1}}{\alpha+1}>1, \frac{\gamma_{2}}{\beta+1}>1, \label{c.10}\\
\frac{\alpha+1}{\gamma_{1}}+\frac{\beta+1}{\gamma_{2}}=1. \label{c.11}
\end{align}
Hence, for all $(u,v)\in \mathcal{Z}$, by (\ref{aaa.2}) and (\ref{c.9}) we get
\begin{align} \label{c.16}
J_{5}(u,v)\geq&\frac{1}{p}(1-\frac{\lambda}{\lambda_{1}})||u||^{p}_{W_{0}^{m,p}(\Omega)}+\frac{1}{q}(1-\frac{\vartheta}{\vartheta_{1}})||v||^{q}_{W_{0}^{n,q}(\Omega)}
-\frac{\alpha+1}{\gamma_{1}(\alpha+\beta+2)}\cdot C^{\gamma_{1}}_{1}||u||^{\gamma_{1}}_{W_{0}^{m,p}(\Omega)}\notag\\
&-\frac{1}{\alpha+\beta+2}\cdot\frac{\beta+1}{\gamma_{2}}\cdot C^{\gamma_{2}}_{2}||v||^{\gamma_{2}}_{W_{0}^{n,q}(\Omega)}.
\end{align}
Since $\alpha+1>p$ and $\beta+1>q$, by (\ref{c.10}) and (\ref{c.11}), we get that $\gamma_{1}>p$ and $\gamma_{2}>q$.
Thus, by (\ref{c.16}), we can find some sufficiently small $r>0$ such that
\begin{align} \label{c.17}
 \inf_{||(u,v)||_{\mathcal{Z}}=r} J_{5}(u,v)>0.
\end{align}
On the other hand, there exists $(u,v)\in \mathcal{Z}\backslash \{(0,0)\}$ such that
\begin{align} \label{c.20}
J_{5}(tu,tv)=&\frac{t^{p}}{p}(1-\frac{\lambda}{\lambda_{1}})||u||^{p}_{W_{0}^{m,p}(\Omega)}+\frac{t^{q}}{q}(1-\frac{\vartheta}{\vartheta_{1}})||v||^{q}_{W_{0}^{n,q}(\Omega)} \notag\\
&-\frac{t^{\alpha+\beta+2}}{\alpha+\beta+2}\int_{\Omega}|u|^{\alpha+1}|v|^{\beta+1}d\psi.
\end{align}
Passing to the limit $t\rightarrow +\infty$, since $\alpha+1>p$ and $\beta+1>q$, by (\ref{c.20}) we get
\begin{align*}
J_{5}(tu,tv) \rightarrow -\infty.
\end{align*}
Hence there exists some $(u_{0},v_{0})\in \mathcal{Z}\backslash \{(0,0)\}$ such that
\begin{align} \label{c.23}
J_{5}(u_{0},v_{0}) <0,~when~||(u_{0},v_{0})||_{\mathcal{Z}}>r.
\end{align}
By (\ref{003}), (\ref{c.17}) and (\ref{c.23}), we conclude by Lemma \ref{mountain} that
$$c=\inf_{\gamma\in\Gamma}\max_{t\in[0,1]}J_{5}(\gamma(t))$$ is a critical value of $J_{5}$, where
$$\Gamma=\{\gamma\in C([0,1], \mathcal{Z}): \gamma(0)=(0,0),\gamma(1)=(u_{0},v_{0})\}.$$
Thus, we complete this proof. ~$\Box$

\section{Proof of the global existence results on finite graphs}

In this section, by using the mountain pass theorem and some Lemmas in section \ref{Preliminary}, we prove the global existence results
(Theorem \ref{T3}-\ref{T23}) on finite graphs.

\noindent $\mathbf{Proof~ of~ Theorem ~\ref{T3}.}$
Now, we define the functional $J_{6}:\mathbb{L}\rightarrow
\mathbb{R}$ by
\begin{align} \label{6.1}
J_{6}(u,v)=\frac{1}{2}||(u,v)||^{2}_{\mathbb{L}}-\int_{V}F(x,u^{+})d\psi-\int_{V}G(x,v^{+})d\psi,
\end{align}
where $u^{+}=\max\{u(x),0\}$ and $v^{+}=\max\{v(x),0\}$.
Firstly, we prove that $J_{6}(u,v)$ satisfies the $(PS)_{c}$ condition for any $c\in \mathbb{R}$.
To see this, we take a sequence $\{(u_{k},v_{k})\}$ in $\mathbb{L}$ such that $J_{6}(u_{k},v_{k})\rightarrow c$ and $J_{6}'(u_{k},v_{k})\rightarrow 0$
as $k\rightarrow +\infty$. Thus, we have
\begin{align}
\frac{1}{2}||(u_{k},v_{k})||^{2}_{\mathbb{L}}-\int_{V}F(x,u^{+}_{k})d\psi-\int_{V}G(x,v^{+}_{k})d\psi= c+o_{k}(1)||(u_{k},v_{k})||_{\mathbb{L}},\label{6.11}\\
||(u_{k},v_{k})||^{2}_{\mathbb{L}}-\int_{V}u^{+}_{k}f(x,u^{+}_{k})d\psi-\int_{V}v^{+}_{k}g(x,v^{+}_{k})d\psi =o_{k}(1)||(u_{k},v_{k})||_{\mathbb{L}}.\label{6.12}
\end{align}
By $(H4)$, (\ref{6.11}) and (\ref{6.12}), we have
\begin{align*}
(\frac{1}{2}-\frac{1}{\theta})||(u_{k},v_{k})||^{2}_{\mathbb{L}}\leq M,
\end{align*}
where $M$ is a positive constant.
Since $\theta>2$, we get that $\{(u_{k},v_{k})\}$ is bounded in $\mathbb{L}$.
By Lemma \ref{qianru2} and Remark \ref{remark2}, since $W^{1,2}(V)$ is pre-compact and $\{(u_{k},v_{k})\}$ is a bounded sequence, we get that $(u_{k},v_{k})\rightarrow (u,v)$ strongly in $\mathbb{L}$. Therefore, $J_{6}(u,v)$ satisfies the $(PS)_{c}$ condition for any $c\in \mathbb{R}$.

Now, we prove that $J_{6}(u,v)$ satisfies all the conditions in the mountain pass theorem in Lemma \ref{mountain}.\\
\indent $\mathbf{Step~ 1.}$ Obviously, we have
\begin{align} \label{004}
J_{6}(0,0)=0.
\end{align}
\indent $\mathbf{Step~ 2.}$ Firstly, we get some inequalities regarding $F(x,u^{+})$ and $G(x,v^{+})$.\\
 By $(H3)$, for any $\varepsilon>0$, there exists $\delta=\delta(\varepsilon)>0$, such that
\begin{align*}
|f(x,u^{+})|\leq\varepsilon |u|, whenever~~ |u|\leq \delta.
\end{align*}
By $(H2)$, there exists $K>0$, such that
\begin{align*}
|f(x,u^{+})|\leq |u|^{s}, whenever~~ |u|\geq K.
\end{align*}
By $(H1)$, there exists $b=b(\varepsilon)>0$, such that
\begin{align*}
|f(x,u^{+})|\leq b\leq \frac{b|u|^{s}}{\delta^{s}}= c_{1}(\varepsilon) |u|^{s}, whenever~~\delta \leq|u|\leq K.
\end{align*}
Thus, we get
\begin{align} \label{6.17}
|f(x,u^{+})|\leq \varepsilon|u| +c_{2}(\varepsilon) |u|^{s}, \forall (x,u)\in V\times \mathbb{R},
\end{align}
where $c_{2}(\varepsilon)=c_{1}(\varepsilon)+1$. Therefore, by (\ref{6.17}) we have
\begin{align*}
F(x,u^{+})\leq & \int^{u^{+}}_{0}(\varepsilon|t| +c_{2}(\varepsilon) |t|^{s})dt,\notag \\
\leq&  \frac{\varepsilon}{2}|u|^{2}+c(\varepsilon) |u|^{s+1},  \forall (x,u)\in V\times \mathbb{R},
\end{align*}
where $c(\varepsilon)=\frac{c_{2}(\varepsilon)}{s+1}$.
Similarly, we can get
\begin{align*}
G(x,v^{+})\leq \frac{\varepsilon}{2}|v|^{2}+c(\varepsilon) |v|^{s+1},  \forall (x,v)\in V\times \mathbb{R}.
\end{align*}
On the other hand, by $(H4)$, there exist two positive constants $c_{3}$ and $c_{4}$, such that
\begin{align}
F(x,u^{+})\geq c_{3} |u|^{\theta}-c_{4},  \forall (x,u)\in V\times \mathbb{R},\label{6.20}\\
G(x,v^{+})\geq c_{3} |v|^{\theta}-c_{4},  \forall (x,v)\in V\times \mathbb{R}.\label{6.21}
\end{align}
\indent $\mathbf{Step ~3.}$ Verify the geometric conditions (two inequalities) of the Mountain Pass Lemma.

By (\ref{6.20}), (\ref{6.21}) and Lemma \ref{qianru2}, for all $(u,v)\in \mathbb{L}$, we get
\begin{align*}
J_{6}(u,v)\geq&\frac{1}{2}||(u,v)||^{2}_{\mathbb{L}}-\int_{V}(\frac{\varepsilon}{2}|u|^{2}+c(\varepsilon) |u|^{s+1})d\psi
-\int_{V}(\frac{\varepsilon}{2}|v|^{2}+c(\varepsilon) |v|^{s+1})d\psi \notag \\
\geq&\frac{1}{2}(1-a^{2}_{1}\varepsilon)||(u,v)||^{2}_{\mathbb{L}}-a^{s+1}_{2}c(\varepsilon)||(u,v)||^{s+1}_{\mathbb{L}},
\end{align*}
where $a_{1}$ and $a_{2}$ are two positive constants.
Since $s+1>2$, we can find some sufficiently small $\varepsilon>0$ and $r>0$ such that
\begin{align} \label{6.23}
 \inf_{||(u,v)||_{\mathbb{L}}=r} J_{6}(u,v)>0.
\end{align}
On the other hand, for any $(u,v)\in \mathbb{L}\backslash \{(0,0)\}$, by (\ref{6.20}) and (\ref{6.21}), we have
\begin{align*}
J_{6}(u,v)& \leq \frac{1}{2}||(u,v)||^{2}_{\mathbb{L}}-\int_{V}(c_{3} |u|^{\theta}-c_{4})d\psi -\int_{V}(c_{3} |v|^{\theta}-c_{4})d\psi \notag \\
&=\frac{1}{2}||(u,v)||^{2}_{\mathbb{L}}-c_{3}\int_{V}( |u|^{\theta}+|v|^{\theta})d\psi +2c_{4}|V|.
\end{align*}
For any given $(u,v)\in \mathbb{L}\backslash \{(0,0)\}$, since $\theta>2$, we have
\begin{align*}
J_{6}(tu,tv)\leq\frac{t^{2}}{2}||(u,v)||^{2}_{\mathbb{L}}-c_{3}t^{\theta}\int_{V}( |u|^{\theta}+|v|^{\theta})d\psi +2c_{4}|V| \rightarrow -\infty, as ~~t\rightarrow +\infty.
\end{align*}
Hence there exists some $(u_{0},v_{0})\in \mathbb{L}\backslash \{(0,0)\}$ such that
\begin{align} \label{6.25}
J_{6}(u_{0},v_{0}) <0,~when ~ ||(u_{0},v_{0})||_{\mathbb{L}}>r.
\end{align}
By (\ref{004}), (\ref{6.23}) and (\ref{6.25}), we conclude by Lemma \ref{mountain} that
$$c=\inf_{\gamma\in\Gamma}\max_{t\in[0,1]}J_{6}(\gamma(t))$$ is a critical value of $J_{6}$, where
$$\Gamma=\{\gamma\in C([0,1], \mathbb{L}): \gamma(0)=(0,0),\gamma(1)=(u_{0},v_{0})\}.$$
Therefore, there exists some
\begin{align} \label{co1}
(u,v)\in \mathbb{L}\backslash \{(0,0)\}
\end{align}
satisfying
\begin{equation}\label{111111222}
\left \{
\begin{array}{lcr}
 -\Delta u+h(x)u=g(x,v^{+}) &{\rm in}& V,\\
-\Delta v+h(x)v=f(x,u^{+}) &{\rm in}& V.
 \end{array}
\right.
\end{equation}
Set $u^{-}=\min\{u(x),0\}$. It is easy to see that $u=u^{+}+u^{-}$.
Noting that
$$|\nabla u^{-}|^{2}\leq \Gamma(u^{-})+\Gamma(u^{-},u^{+})=\Gamma(u^{-},u),$$
 and $g(x,v^{+})\geq0$, by the first equation in (\ref{111111222}), we get
\begin{align*}
\int_{V}(|\nabla u^{-}|^{2}+h|u^{-}|^{2})d\psi\leq& -\int_{V}u^{-}\Delta ud\psi+\int_{V}hu^{-}ud\psi \notag \\
=&\int_{V}u^{-}g(x,v^{+})d\psi\leq0.
\end{align*}
So we get $u^{-}\equiv 0$, and thus $u=u^{+}+u^{-}\geq 0$ for all $x\in V$.
Set $v^{-}=\min\{v(x),0\}$. Similarly, by the second equation in (\ref{111111222}), we have that $v^{-}\equiv 0$ and $v=v^{+}+v^{-}\geq 0$ for all $x\in V$.
Therefore, we get that
\begin{align} \label{LLco3}
(u,v)\in \mathbb{L}\backslash \{(0,0)\}
\end{align}
satisfies
\begin{equation}\label{E3555544}
\left \{
\begin{array}{lcr}
 -\Delta u+h(x)u=g(x,v) &{\rm in}& V,\\
-\Delta v+h(x)v=f(x,u) &{\rm in}& V,
 \end{array}
\right.
\end{equation}
where $u(x)\geq 0$ and $v(x)\geq0$ for all $x\in V$.
Since $g(x,v^{+})\geq 0$ and $f(x,u^{+})\geq 0$, we get
 \begin{equation*}
\left \{
\begin{array}{lcr}
 -\Delta u+h(x)u\geq 0\\
-\Delta v+h(x)v\geq 0.
 \end{array}
\right.
\end{equation*}
Suppose that there exist a point $x_{0}\in V$ such that $u(x_{0})=0=\min_{x\in V}u(x)$, and a point $x_{*}\in V$ such that $v(x_{*})=0=\min_{x\in V}v(x)$.
Then, by Lemma \ref{zhunze1}, we get $(u,v)\equiv(0,0)$, which is a contradiction with (\ref{LLco3}).
 Therefore, $u(x)> 0$ and $v(x)>0$ for all $x\in V$. Thus, the proof is completed.  ~ ~$\Box$ \\

\noindent $\mathbf{Proof~ of~ Theorem ~\ref{T4}.}$
Now, we define the functional $J_{7}:\mathbb{X}\rightarrow \mathbb{R}$ by
\begin{align} \label{9.1}
J_{7}(u,v)=&\frac{1}{p}\int_{V}(|\nabla u|^{p}+h(x)|u|^{p})d\psi +\frac{1}{q}\int_{V}(|\nabla v|^{q}+h(x)|v|^{q})d\psi \notag \\
&-\int_{V}F(x,u^{+},v^{+})d\psi,
\end{align}
where $u^{+}=\max\{u(x),0\}$ and $v^{+}=\max\{v(x),0\}$.
Firstly, we prove that $J_{7}(u,v)$ satisfies the $(PS)_{c}$ condition for any $c\in \mathbb{R}$.
To see this, we take a sequence $\{(u_{k},v_{k})\}$ in $\mathbb{X}$ such that $J_{7}(u_{k},v_{k})\rightarrow c$ and $J_{7}'(u_{k},v_{k})\rightarrow 0$
as $k\rightarrow +\infty$. Thus, we have
\begin{align}
c+o_{k}(1)\geq -\int_{V}\big{(}\theta_{1}u^{+}_{k}F_{u}(x,u^{+}_{k},v^{+}_{k})+\theta_{2}v^{+}_{k}F_{v}(x,u^{+}_{k},v^{+}_{k})\big{)}d\psi \notag \\
 +\frac{1}{p}\int_{V}(|\nabla u_{k}|^{p}+h(x)|u_{k}|^{p})d\psi +\frac{1}{q}\int_{V}(|\nabla v_{k}|^{q}+h(x)|v_{k}|^{q})d\psi ; \label{9.11}\\
o_{k}(1)=\int_{V}(|\nabla u_{k}|^{p}+h(x)|u_{k}|^{p})d\psi  -\int_{V}u^{+}_{k}F_{u}(x,u^{+}_{k},v^{+}_{k})d\psi ;\label{9.12}\\
o_{k}(1)=\int_{V}(|\nabla v_{k}|^{q}+h(x)|v_{k}|^{q})d\psi  -\int_{V}v^{+}_{k}F_{v}(x,u^{+}_{k},v^{+}_{k})d\psi .\label{9.13}
\end{align}
By (\ref{9.11}), (\ref{9.12}) and (\ref{9.13}), we have
\begin{align*}
(\frac{1}{p}-\theta_{1})||u_{k}||^{p}_{W^{1,p}(V)} + (\frac{1}{q}-\theta_{2})||v_{k}||^{q}_{W^{1,q}(V)} \leq M,
\end{align*}
where $M$ is a positive constant.
Since $0<\theta_{1}<\frac{1}{p}$ and $0<\theta_{2}<\frac{1}{q}$, we get $\{(u_{k},v_{k})\}$ is bounded in $\mathbb{X}$.
By Lemma \ref{qianru2} and Remark \ref{remark2}, since $W^{1,p}(V)$ and $W^{1,q}(V)$ are pre-compact and $\{(u_{k},v_{k})\}$ is a bounded sequence, we get that $(u_{k},v_{k})\rightarrow (u,v)$ strongly in $\mathbb{X}$. Therefore, $J_{7}(u,v)$ satisfies the $(PS)_{c}$ condition for any $c\in \mathbb{R}$.

Now, we prove that $J_{7}(u,v)$ satisfies all the conditions in the mountain pass theorem in Lemma \ref{mountain}.
Obviously, we have
\begin{align} \label{005}
J_{7}(0,0)=0.
\end{align}
Since $|\nabla u^{+}(x)|\leq |\nabla u(x)|$,
by (\ref{9.1}), $(H2)$, $(H3)$ and Lemma \ref{qianru2}, for all $(u,v)\in \mathbb{X}$, we get
\begin{align*}
J_{7}(u,v)\geq&-C_{1}(||u||^{r}_{W^{1,p}(V)}+ ||v||^{s}_{W^{1,q}(V)}+||u||^{\bar{r}}_{W^{1,p}(V)}+||v||^{\bar{s}}_{W^{1,q}(V)}) \notag\\
   &+\frac{1}{p}||u||^{p}_{W^{1,p}(V)}+\frac{1}{q}||v||^{q}_{W^{1,q}(V)}.
\end{align*}
Thus, we can find some sufficiently small $\rho>0$ such that when $||(u,v)||_{\mathbb{X}}=\rho$, we have
\begin{align} \label{11.3}
 J_{7}(u,v)>0.
\end{align}
On the other hand, by $(H_{4})$, we have
\begin{align*}
F\big{(}x,(u^{+})^{t^{\theta_{1}}},(v^{+})^{t^{\theta_{2}}}\big{)}>tC_{2},
\end{align*}
where $C_{2}$ is a positive constant.
 For any $(u,v)\in \mathbb{X}\backslash \{(0,0)\}$, since $0<\theta_{1}p<1$ and $0<\theta_{2}q<1$, we have
\begin{align*}
J_{7}(tu,tv)\leq \frac{1}{p}t^{\theta_{1}p}||u||^{p}_{W^{1,p}(V)}+\frac{1}{q}t^{\theta_{2}q}||v||^{q}_{W^{1,q}(V)}-tC_{2}|V|\rightarrow -\infty, as ~~t\rightarrow +\infty.
\end{align*}
Hence there exists some $(u_{0},v_{0})\in \mathbb{X}\backslash \{(0,0)\}$ such that
\begin{align} \label{11.8}
J_{7}(u_{0},v_{0}) <0,~when~ ||u||^{p}_{W^{1,p}}(V)>r ~and~||v||^{q}_{W^{1,q}}(V)>r.
\end{align}
By (\ref{005}), (\ref{11.3}) and (\ref{11.8}), we conclude by Lemma \ref{mountain} that
$$c=\inf_{\gamma\in\Gamma}\max_{t\in[0,1]}J_{7}(\gamma(t))$$ is a critical value of $J_{7}$, where
$$\Gamma=\{\gamma\in C([0,1], \mathbb{X}): \gamma(0)=(0,0),\gamma(1)=(u_{0},v_{0})\}.$$
Therefore, there exists some
\begin{align*}
(u,v)\in \mathbb{X}\backslash \{(0,0)\}
\end{align*}
satisfying
\begin{equation}\label{AAA}
\left \{
\begin{array}{lcr}
 -\Delta_{p} u+h(x)|u|^{p-2}u=f(x,u^{+},v^{+})  &{\rm in}& V,\\
-\Delta_{q} v+h(x)|v|^{q-2}v=g(x,u^{+},v^{+})  &{\rm in}& V.
 \end{array}
\right.
\end{equation}
Set $u^{-}=\min\{u(x),0\}$. Obviously, $u=u^{+}+u^{-}$.
Noting that
$$|\nabla u^{-}|^{2}\leq \Gamma(u^{-})+\Gamma(u^{-},u^{+})=\Gamma(u^{-},u)$$
 and $f(x,0,0)=0$, by the first equation in (\ref{AAA}), we get
\begin{align*}
\int_{V}(|\nabla u^{-}|^{p}+h|u^{-}|^{p})d\psi\leq& -\int_{V}u^{-}\Delta_{p}ud\psi+\int_{V}hu^{-}|u^{-}|^{p-2}u d\psi \notag \\
=&\int_{V}u^{-}f(x,u^{+},v^{+})d\psi=0.
\end{align*}
So we get $u^{-}\equiv 0$, and thus $u=u^{+}+u^{-}\geq 0$ for all $x\in V$.
Set $v^{-}=\min\{v(x),0\}$. Similarly, by the second equation in (\ref{AAA}), we have $v^{-}\equiv 0$ and $v=v^{+}+v^{-}\geq 0$ for all $x\in V$.
Therefore, we get that
\begin{align} \label{co3}
(u,v)\in \mathbb{X}\backslash \{(0,0)\}
\end{align}
satisfies
\begin{equation*}
\left \{
\begin{array}{lcr}
 -\Delta_{p} u+h(x)|u|^{p-2}u=f(x,u,v)  &{\rm in}& V,\\
-\Delta_{q} v+h(x)|v|^{q-2}v=g(x,u,v)  &{\rm in}& V,
 \end{array}
\right.
\end{equation*}
where $u(x)\geq 0$ and $v(x)\geq0$ for all $x\in V$.
Suppose that there exist a point $x_{0}\in V$ such that $u(x_{0})=0=\min_{x\in V}u(x)$, and a point $x_{*}\in V$ such that $v(x_{*})=0=\min_{x\in V}v(x)$.
Then, by $(H1)$ and Lemma \ref{zhunze1}, we get $(u,v)\equiv(0,0)$, which is a contradiction with (\ref{co3}).
Therefore, $u(x)> 0$ and $v(x)> 0$ for all $x\in V$, and the proof is completed.  ~ ~$\Box$ \\

\noindent $\mathbf{Proof~ of~ Theorem ~\ref{T23}.}$
Now, we define the functional $J_{vmn}:\mathbb{Q}\rightarrow R$ by
\begin{align*}
J_{vmn}(u,v)=&\frac{1}{p}\int_{V}(|\nabla^{m} u|^{p}+h|u|^{p})d\psi+\frac{1}{q}\int_{V}(|\nabla^{n} v|^{q}+h|v|^{q})d\psi \notag\\
&-\int_{V}F(x,u)d\psi-\int_{V}G(x,v)d\psi.
\end{align*}
Firstly, we prove that $J_{vmn}(u,v)$ satisfies the $(PS)_{c}$ condition for any $c\in \mathbb{R}$.
To see this, we take a sequence $\{(u_{k},v_{k})\}$ in $\mathbb{Q}$ such that
 $J_{vmn}(u_{k},v_{k})\rightarrow c$ and $J'_{vmn}(u_{k},v_{k})\rightarrow 0$
as $k\rightarrow +\infty$. Thus, we have
\begin{align}
\frac{1}{p}\int_{V}(|\nabla^{m} u_{k}|^{p}&+h|u_{k}|^{p})d\psi +\frac{1}{q}\int_{V}(|\nabla^{n} v_{k}|^{q}+h|v_{k}|^{q})d\psi -\int_{V}F(x,u_{k})d\psi\notag\\
&-\int_{V}G(x,v_{k})d\psi= c+o_{k}(1)||(u_{k},v_{k})||_{\mathbb{Q}},\label{23.2}
\end{align}
and
\begin{align}
\int_{V}(|\nabla^{m} u_{k}|^{p}+h|u_{k}|^{p})d\psi-\int_{V}u_{k}f(x,u_{k})d\psi =o_{k}(1), \label{23.12}\\
\int_{V}(|\nabla^{n} v_{k}|^{q}+h|v_{k}|^{q})d\psi-\int_{V}v_{k}g(x,v_{k})d\psi =o_{k}(1).\label{23.13}
\end{align}
By $(H3)$, (\ref{23.2}), (\ref{23.12}) and (\ref{23.13}), we have
\begin{align*}
(\frac{1}{p}-\frac{1}{\theta_{0}})||u_{k}||^{p}_{W^{m,p}(V)}+(\frac{1}{q}-\frac{1}{\theta_{0}})||v_{k}||^{q}_{W^{n,q}(V)}\leq M,
\end{align*}
where $M$ is a positive constant.
Since $\theta_{0}>p,q$, we get $\{(u_{k},v_{k})\}$ is bounded in $\mathbb{Q}$.
By Lemma \ref{qianru2} and Remark \ref{remark2}, since $W^{m,p}(V)$ and $W^{n,q}(V)$ are pre-compact and $\{(u_{k},v_{k})\}$ is a bounded sequence, we get that $(u_{k},v_{k})\rightarrow (u,v)$ strongly in $\mathbb{Q}$. Therefore, $J_{vmn}(u,v)$ satisfies the $(PS)_{c}$ condition for any $c\in \mathbb{R}$.

Now, we prove that $J_{vmn}(u,v)$ satisfies all the conditions in the mountain pass theorem in Lemma \ref{mountain}.
 Obviously, we have
 \begin{align} \label{006}
 J(0,0)=0.
 \end{align}
From \cite{AGY}, we get some inequalities regarding $F(x,u)$ and $G(x,v)$.
By $(H2)$, for all $t\in \mathbb{R}$ and $x\in V$, there exist some $\lambda<\lambda_{mp}(V)$ and $\delta>0$ such that
\begin{align*}
F(x,t)\leq\frac{\lambda}{p}|t|^{p}+\frac{|t|^{p+1}}{\delta^{p+1}}|F(x,t)|.
\end{align*}
For any $u\in W^{m,p}(V)$ with $||u||_{W^{m,p}(V)}\leq 1$, we have $||u||_{\infty}\leq C_{0}$ and thus $||F(x,u)||_{\infty}\leq C_{0}$.
Hence
\begin{align*}
\int_{V}F(x,t)\leq&\frac{\lambda}{p}\int_{V}|u|^{p}d\psi+\frac{C_{0}}{\delta^{p+1}}\int_{V}|u|^{p+1}|F(x,t)|d\psi\notag\\
\leq&\frac{\lambda}{\lambda_{mp}(V)p}||u||^{p}_{W^{m,p}(V)} +C_{0}||u||^{p+1}_{W^{m,p}(V)}.
\end{align*}
Similarly, by $(H2)$, for all $t\in \mathbb{R}$ and $x\in V$, there exist some $\lambda_{0}<\lambda_{nq}(V)$ and $\delta_{0}>0$ such that
\begin{align*}
G(x,t)\leq\frac{\lambda_{0}}{q}|t|^{q}+\frac{|t|^{q+1}}{\delta_{0}^{q+1}}|G(x,t)|.
\end{align*}
For any $v\in W^{n,q}(V)$ with $||v||_{W^{n,q}(V)}\leq 1$, we have $||v||_{\infty}\leq C_{01}$ and thus $||G(x,v)||_{\infty}\leq C_{01}$.
Hence
\begin{align*}
\int_{V}G(x,v)\leq&\frac{\lambda_{0}}{q}\int_{V}|v|^{q}d\psi+\frac{C_{01}}{\delta_{0}^{q+1}}\int_{V}|v|^{q+1}|G(x,v)|d\psi\notag\\
\leq&\frac{\lambda_{0}}{\lambda_{nq}(V)q}||u||^{q}_{W^{n,q}(V)} +C_{01}||v||^{q+1}_{W^{n,q}(V)}.
\end{align*}
It follows that
\begin{align*}
J_{vmn}(u,v)\geq&\frac{\lambda_{mp}(V)-\lambda}{\lambda_{mp}(V)p}||u||^{p}_{W^{m,p}(V)}+\frac{\lambda_{nq}(V)-\lambda_{0}}{\lambda_{nq}(V)q}||u||^{q}_{W^{n,q}(V)}\notag\\
&-C_{0}||u||^{p+1}_{W^{m,p}(V)}-C_{01}||v||^{q+1}_{W^{n,q}(V)}.
\end{align*}
Therefore, we can find some sufficiently small $r>0$ such that
\begin{align} \label{23.59}
 \inf_{||u||_{\mathbb{Q}}=r} J(u,v)>0.
\end{align}
On the other hand, by $(H3)$, there exist some positive constants $\xi_{1},\xi_{2},\xi_{3}$ and $\xi_{4}$ such that
\begin{align*}
F(x,s)\geq \xi_{1}|s|^{\theta_{0}}-\xi_{2},~~\forall s\in \mathbb{R},\\
G(x,s)\geq \xi_{3}|s|^{\theta_{0}}-\xi_{4},~~\forall s\in \mathbb{R}.
\end{align*}
For any given $(u,v)\in \mathbb{Q}\backslash \{(0,0)\}$, since $\theta_{0}>p,q$, we have
\begin{align*}
J_{vmn}(tu,tv)\leq&\frac{t^{p}}{p}||u||^{p}_{W^{m,p}(V)}+\frac{t^{q}}{q}||u||^{q}_{W^{n,q}(V)}
- \xi_{1}|t|^{\theta_{0}}\int_{V}|u|^{\theta_{0}}d\psi+\xi_{2}|V| \notag\\
& - \xi_{3}|t|^{\theta_{0}}\int_{V}|v|^{\theta_{0}}d\psi+\xi_{4}|V| \rightarrow -\infty, as ~~t\rightarrow +\infty.
\end{align*}
Hence there exists some $(u_{0},v_{0})\in \mathbb{Q}\backslash \{(0,0)\}$ such that
\begin{align} \label{23.90}
J(u_{0},v_{0}) <0,~when~||(u_{0},v_{0})||_{\mathbb{Q}}>r.
\end{align}
By (\ref{006}), (\ref{23.59}) and (\ref{23.90}), we conclude by Lemma \ref{mountain} that
$$c=\inf_{\gamma\in\Gamma}\max_{t\in[0,1]}J_{vmn}(\gamma(t))$$ is a critical value of $J_{vmn}$, where
$$\Gamma=\{\gamma\in C([0,1], \mathbb{Q}): \gamma(0)=(0,0),\gamma(1)=(u_{0},v_{0})\}.$$
Thus, we complete this proof.      ~$\Box$

\end{document}